\newcommand{\rl}{{\mathbb{R}}}
\newcommand{\cx}{{\mathbb{C}}}
\newcommand{\id}{{\mathbb{I}}}
\newcommand{\dbar}{\overline{\partial}}
\newcommand{\abs}[1]{\left|{#1}\right|}
\newcommand{\norm}[1]{\left\|{#1}\right\|}
\newcommand{\e}{\epsilon}
\newcommand{\supp}{{\mathrm{supp}}}
\newcommand{\tmop}[1]{\ensuremath{\operatorname{#1}}}
\renewcommand{\Re}{\tmop{Re}}
\newcommand{\dist}{{\mathrm{dist}}}
\newcommand{\CR}{{\mathrm{CR}}}
\newtheorem{theorem}{Theorem}
\newtheorem{lemma}{Lemma}
\newtheorem{prop}{Proposition}
\newtheorem*{thma}{Result  (\cite[Theroem~3.3]{stasica},\cite[Proposition~1.4]{kr})}
\newtheorem*{thmb}{Result  (\cite[Chapter VI, Theorem 2]{stein})}
\theoremstyle{definition}
\newtheorem{defn}{Definition}
\theoremstyle{remark}
\theoremstyle{remark}
\newcommand{\reg}[1]{{{#1}^{\mathrm{reg}}}}
\newcommand{\sng}[1]{{{#1}^{\mathrm{sing}}}}
\newcommand{\esup}{\mathop{\mathrm{ess. sup}_{\mathcal{H}}}}
\newenvironment{myprop}[1]{\mbox{}\newline{\bf{#1}.}\em}{\mbox{}\\}
\begin{document}
\title{ CR functions on subanalytic hypersurfaces.}
\author{Debraj Chakrabarti}
\address{Department of Mathematics,  University of Notre Dame,
Notre Dame, IN 46556, USA} \email{dchakrab@nd.edu}
\author{Rasul Shafikov}
\address{Department of Mathematics, the University of Western Ontario,
London, Canada  N6A 5B7} \email{shafikov@uwo.ca}
\begin{abstract}
A general class of singular real hypersurfaces, called {\em subanalytic}, is defined.
For a subanalytic hypersurface $M$ in $\mathbb C^n$, Cauchy-Riemann (or simply CR) functions 
on $M$ are defined, and certain properties of CR functions discussed. In particular,
sufficient geometric conditions are given for a point $p$ on a subanalytic 
hypersurface $M$  to admit a germ at $p$ of a smooth CR function $f$ that cannot be 
holomorphically extended to either side of $M$. As a consequence it is shown that a well-known 
condition of the absence of complex hypersurfaces contained in a smooth real hypersurface $M$, 
which guarantees one-sided holomorphic extension of CR functions on $M$, is neither a necessary 
nor a sufficient condition for one-sided holomorphic extension in the singular case.
\end{abstract}
\maketitle

\setcounter{tocdepth}{1}
\tableofcontents

\section{Main Results}

In this paper we define a class of non-smooth real hypersurfaces in
$\cx^n$, which we call {\em subanalytic}, and study general
properties of CR functions defined on them. Precise definitions are
given in $\S$\ref{sec-subanalytic-def} and $\S$\ref{sec-CR}, but
roughly speaking, a subanalytic hypersurface $M\subset\cx^n$ is a
real codimension one subanalytic set which divides any small enough
neighbourhood $U$ of any point $p\in M$ into two one-sided
neighbourhoods~$U^\pm$. This gives a well-defined local orientation
on the smooth part $\reg{M}$ of $M$, which allows us to do
integration. We define  a locally integrable function $f$ on a
subanalytic hypersurface $M$ to be CR if it satisfies
$\int_{\reg{M}}f\dbar\phi=0$, where $\phi$ is a test form of
bidegree $(n,n-2)$. In Proposition~\ref{prop-bv}, we show that, just
as with smooth hypersurfaces, continuous boundary values of
holomorphic functions on subanalytic hypersurfaces are CR.

It follows from the definition that the restriction of a CR function
on a subanalytic set $M$ to its regular part $\reg{M}$ is CR in the
usual sense. Although a function on $M$ which is CR on $M^{\rm reg}$
may fail in general to be CR on $M$, our first theorem gives some
sufficient conditions for a CR function on the smooth locus of a
hypersurface $M$ to be CR on all of~$M$.

\begin{theorem}\label{thm-removable}
Let $M$ be a subanalytic hypersurface in $\cx^n$, and let $E$ be
subanalytic subset of $\cx^n$ contained in $M$ such that $E$ is
of codimension at least one in $M$. Suppose that $f$ is a function
on $M$ which is CR on $M\setminus E$.
\begin{enumerate}
\item[(i)] If $f$ is continuous on $M$ and vanishes on $E$, then $f$ is CR on $M$.
 \item[(ii)] If $E$ has codimension at least two in $M$, and for $z$ near $E$ we have
    $\abs{f(z)}=O(\dist(z,E)^{-\alpha})$, where $0\leq \alpha<1$, then $f$ is CR on $M$.
    In particular, if $f$ is bounded on $M$, then $f$ is a CR function on~$M$.
\end{enumerate}
\end{theorem}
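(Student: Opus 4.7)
\medskip

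\noindent\textbf{Proof proposal.}
The plan is a standard cutoff-and-limit argument, with the subanalytic geometry of $E\subset M$ controlling a boundary term. Fix a compactly supported test $(n,n-2)$-form $\phi$; I want to show $\int_{\reg{M}} f\, \dbar\phi = 0$. Choose a smooth, non-decreasing $\psi\colon [0,\infty)\to[0,1]$ with $\psi\equiv 0$ on $[0,1]$ and $\psi\equiv 1$ on $[2,\infty)$, and set $\chi_\e(z)\assign \psi(\dist(z,E)/\e)$. Then $\chi_\e$ vanishes in an $\e$-tube around $E$, equals $1$ outside the $2\e$-tube, and satisfies $\abs{\dbar\chi_\e}\leq C/\e$, supported on the annular tube $T_\e\assign\{z:\e\leq\dist(z,E)\leq 2\e\}$. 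Since the distance function is only Lipschitz one should mollify $\chi_\e$ slightly (or appeal to a subanalytic smoothing), but this causes no essential change in the estimates below.

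Since $\chi_\e\phi$ is a test form on the open set $\cx^n\setminus E$ and $f$ is CR on $M\setminus E$ by hypothesis, we have $\int_{\reg{M}}f\,\dbar(\chi_\e\phi)=0$. Leibniz gives
\[
\int_{\reg{M}} f\, \chi_\e\, \dbar\phi \; = \; -\int_{\reg{M}} f\, \dbar\chi_\e\wedge\phi.
\]
As $\e\to 0^+$, $\chi_\e\to 1$ off $E$, and $\abs{f\chi_\e\dbar\phi}\leq\abs{f\,\dbar\phi}\in L^1_{\mathrm{loc}}(\reg{M})$ since $f$ is locally integrable and $\dbar\phi$ is bounded with compact support, so dominated convergence shows the left-hand side converges to $\int_{\reg{M}} f\, \dbar\phi$. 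It remains to show the right-hand side tends to zero.

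The estimate for the right-hand side combines $\abs{\dbar\chi_\e}\leq C/\e$, an $L^\infty$ bound on $f$ over $T_\e$, and the volume bound
\[
\tmop{Vol}_{2n-1}(T_\e\cap\reg{M}) \; = \; O(\e^{k}), \qquad k=\tmop{codim}_M E,
\]
which is the standard tube estimate in the subanalytic category (exactly the type of statement used in \cite{stasica,kr} quoted as the Result above, applicable here because both $\reg{M}$ and $E$ are subanalytic). In case (i), $k\geq 1$ gives volume $O(\e)$; continuity of $f$ and the vanishing $f|_E=0$ give $\sup_{T_\e}\abs{f}\to 0$, so the right-hand side is bounded by $\sup_{T_\e}\abs{f}\cdot (C/\e)\cdot O(\e)=o(1)$. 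In case (ii), $k\geq 2$ gives volume $O(\e^2)$; on $T_\e$ we have $\abs{f}\leq C\e^{-\alpha}$, so the right-hand side is bounded by $C\e^{-\alpha}\cdot(C/\e)\cdot O(\e^2)=O(\e^{1-\alpha})\to 0$ because $\alpha<1$. The bounded case is $\alpha=0$.

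The main obstacle is the volume estimate $\tmop{Vol}_{2n-1}(T_\e\cap\reg{M})=O(\e^k)$: $M$ itself is singular, so one cannot simply quote a tubular-neighbourhood theorem but must combine subanalytic stratification of $M$ with the fact that $E$ has codimension $k$ inside each stratum of top dimension. Everything else is routine Leibniz and dominated convergence; the delicate point is really that the intrinsic subanalytic geometry of $(M,E)$ produces enough decay of the boundary integrand to absorb the $1/\e$ blow-up of $\dbar\chi_\e$.
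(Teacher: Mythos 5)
Your argument is correct and rests on exactly the same mechanism as the paper's proof (Bochner's cutoff method: a cutoff with gradient $O(1/\e)$ beaten by a tube--volume estimate $O(\e^k)$ for the subanalytic pair), but it is organized differently, and the difference is worth noting. You run a single cutoff $\chi_\e=\psi(\dist(\cdot,E)/\e)$ around all of $E$ at once, which requires the estimate $\mathcal{H}\bigl(\reg{M}\cap\{\dist(\cdot,E)\le 2\e\}\bigr)=O(\e^{k})$ for a \emph{general} subanalytic $E$ of codimension $k$ in $M$. The paper only proves such an estimate for tubes around \emph{affine} subspaces (Lemma~\ref{lem-volumes}, via the $\e$-analytic patch decomposition), and this is precisely why it organizes the proof as an induction over the strata of a stratification compatible with $E$: at each stage one works near a single stratum $A(p)$, flattens it by a local real-analytic diffeomorphism, and cuts off around the flattened stratum, with the exponent $2n-\dim A(p)-2$ depending on the stratum (Proposition~\ref{prop-cr}). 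Your global tube estimate is true and can be deduced by the same route (stratify $E$, cover the tube by tubes around the strata, flatten locally, note that the top-dimensional strata of $E$ dominate), so you have not avoided the stratification -- you have only relocated it inside the volume lemma; on the other hand your version avoids the slightly delicate point in the paper's induction of comparing $\dist(\cdot,A(p))$ with $\dist(\cdot,E)$, since your sup of $\abs{f}$ is taken directly where $\dist(\cdot,E)\ge\e$. Two small gaps to close: (a) the smoothing of $\dist(\cdot,E)$, which you flag and which is handled by Stein's regularized distance exactly as in the paper's Lemma~\ref{lem-chi}; and (b) in case (ii) the function $f$ is not assumed bounded, so before invoking dominated convergence you must check $f\in L^1_{\rm loc}(M)$ across $E$ -- the paper does this with a dyadic decomposition of the tube and the same volume bound, giving a convergent series $\sum_\nu 2^{\nu\alpha}2^{-\nu k}$, and you should say the same.
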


Theorem~\ref{thm-removable} is proved in Section~\ref{sec-removable}
using the classical technique of Bochner \cite{b}. This method can
be  used to prove more general results of this type, but we only
develop the topic to the extent needed for our purposes.

The main thrust of this paper is in determining the conditions under
which there exist continuous CR functions on subanalytic
hypersurfaces which are not (locally) boundary values of holomorphic
functions. For a smooth hypersurface $M$ in $\cx^n$, the existence
of such functions on  $M$ near a point $p\in M$ is equivalent to the
existence of the germ of a complex analytic hypersurface $A$ through
$p$ contained in $M$.  We say that a real hypersurface $M$ (smooth
or subanalytic) in a complex manifold is {\em non-minimal} at a
point $p\in M$, if there is a germ of a complex analytic
hypersurface $A$ such that $p\in A$ and $A\subset M$. If there is no
such germ $A$, we say that $M$ is {\em minimal} at $p$. Therefore,
non-extendable CR functions exist near a point on a smooth
hypersurface (or even a hypersurface that can be locally represented
as a graph near a point) provided the hypersurface is non-minimal at
that point.

For subanalytic hypersurfaces, the condition for the existence of
non-extendable CR functions is more subtle. First of all, without an
additional topological assumption, non-minimality by itself does not
suffice. Further, there is another geometric condition, first
introduced in \cite{cs}, called {\it two-sided support}, that also
gives rise to non-extendable CR functions. We say that $M$ has {\it
proper} two-sided support at $p\in M$ if there is an open
neighbourhood $\Omega$ of $p$ such that $M$ divides $\Omega$ into
two connected components $\Omega^+$ and $\Omega^-$, and there exist
germs at $p$ of distinct complex analytic hypersurfaces
$A^\pm\subset\overline{\Omega^\pm}$ such that $A^+\cap M = A^-\cap
M$, see $\S$\ref{sec-twosided} for details. We have the following
sufficient conditions for the existence of smooth non-extendable CR
functions on subanalytic hypersurfaces.

\begin{theorem}\label{thm-non-ext}
Let $M$ be a subanalytic hypersurface in $\cx^n$ and $p\in M$.
Suppose that at least one of the following statements holds:
\begin{enumerate}
\item $M$ is non-minimal at $p$, i.e., there exists a germ
    of a complex hypersurface $Z\subset M$ such that $p\in
    Z$, and $Z$ divides $M$ locally into more than
    one component at $p$.
\item $M$ has proper two-sided support at $p$.
\end{enumerate}
Then, for every integer $m\geq 0$ there is a CR function on $M$ near
$p$ of class $\mathcal{C}^m$ that does not extend as a holomorphic
function to either side.
\end{theorem}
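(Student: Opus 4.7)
The plan is, in each of the two cases, to produce an explicit $\mathcal{C}^{m}$ CR function $f$ near $p$ whose local vanishing pattern on $M$ precludes holomorphic extension to either side. In both cases, $f$ is first defined on the complement of an exceptional subanalytic set $E\subset M$ of codimension at least one, where it is manifestly CR; Theorem~\ref{thm-removable} then upgrades $f$ to a CR function on all of $M$, and one-sided holomorphic extension is ruled out by invoking boundary uniqueness on a smooth piece of $\reg{M}$ contained in a suitable component of $M\setminus E$.

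For Case~(1), let $\rho$ be a holomorphic defining function of $Z$ near $p$, and let $M_{1}$ be one of the local components of $M\setminus Z$ at $p$. I would set
\[
 f:=\rho^{m+1}\text{ on }M_{1},\qquad f:=0\text{ elsewhere on }M.
\]
Then $f$ is of class $\mathcal{C}^{m}$ on $M$ (since $\rho^{m+1}$ vanishes to order $m+1$ along $Z$), it is CR on $M\setminus Z$ (each piece is either the restriction of a holomorphic function or identically zero), and it vanishes on $Z$, so it is CR on all of $M$ by Theorem~\ref{thm-removable}(i). If $f$ were the boundary value of some $F\in\mathcal{O}(\Omega^{+})$ (or $F\in\mathcal{O}(\Omega^{-})$), picking an open piece $V\subset\reg{M}$ lying in a zero-component of $M\setminus Z$ would give $F|_{V}\equiv 0$; one-sided boundary uniqueness for holomorphic functions along a smooth real hypersurface (Schwarz reflection) then forces $F\equiv 0$ on the connected one-sided neighbourhood, contradicting $f|_{M_{1}}=\rho^{m+1}\not\equiv 0$.

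For Case~(2), let $h^{\pm}$ be holomorphic defining functions of $A^{\pm}$ near $p$, and set $E:=A^{+}\cap M=A^{-}\cap M$. I would try the meromorphic candidate
\[
 g:=\frac{(h^{+})^{N}}{h^{-}}+\frac{(h^{-})^{N}}{h^{+}},
\]
with $N$ to be chosen large. Since $h^{+}|_{M}$ and $h^{-}|_{M}$ have the same zero set $E$, a {\L}ojasiewicz-type comparison (available because $M$, $A^{\pm}$ and $E$ are subanalytic) shows that for $N$ sufficiently large the restriction $g|_{M\setminus E}$ extends to a $\mathcal{C}^{m}$ function on $M$ vanishing on $E$; this restriction is CR on $M\setminus E$ and hence on all of $M$ by Theorem~\ref{thm-removable}. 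Properness of two-sided support forces $A^{\pm}\cap\Omega^{\pm}\neq\emptyset$: if $g|_{M}$ extended as $F\in\mathcal{O}(\Omega^{+})$, then since $(h^{+})^{N}/h^{-}$ is holomorphic on $\Omega^{+}$ (as $A^{-}\cap\Omega^{+}=\emptyset$), the function $\psi:=F-(h^{+})^{N}/h^{-}$ would give a holomorphic extension of $(h^{-})^{N}/h^{+}|_{M}$ to $\Omega^{+}$. Boundary uniqueness on a smooth piece of $\reg{M}$ followed by the identity principle would force $\psi$ to coincide with the meromorphic function $(h^{-})^{N}/h^{+}$ throughout $\Omega^{+}\setminus A^{+}$, contradicting its poles on $A^{+}\cap\Omega^{+}$. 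A symmetric argument rules out extension to $\Omega^{-}$.

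The main obstacle is the regularity analysis in Case~(2): verifying that $g|_{M}$ is actually $\mathcal{C}^{m}$ across $E$ requires controlling the joint vanishing of $h^{+}|_{M}$ and $h^{-}|_{M}$ at points where $M$, $A^{\pm}$, or $E$ may all be singular. In the smooth transverse regime this reduces to the observation that $h^{-}|_{M}=\phi\cdot h^{+}|_{M}$ for some smooth non-vanishing complex-valued $\phi$, so that $(h^{+})^{N}/h^{-}|_{M}=(h^{+}|_{M})^{N-1}/\phi$ is visibly $\mathcal{C}^{N-1}$; in the general subanalytic setting one must appeal to the {\L}ojasiewicz inequality to obtain analogous bounds.
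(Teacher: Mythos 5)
Your overall strategy coincides with the paper's: in each case one builds an explicit function from holomorphic defining functions of $Z$ (resp.\ $A^\pm$), gets CR-ness across the exceptional set from Theorem~\ref{thm-removable}(i), and rules out one-sided extension by boundary uniqueness. The gap is in the regularity claims, and it is not a technicality --- it is the heart of the proof. Recall that a $\mathcal{C}^m$ function on $M$ means, by the paper's definition, the restriction to $M$ of a $\mathcal{C}^m$ function on $\cx^n$. In Case~(1) your $f$ equals $\rho^{m+1}$ on one component $M_1$ of $M\setminus Z$ and $0$ on the others, and you assert this is $\mathcal{C}^m$ because $\rho^{m+1}$ vanishes to order $m+1$ on $Z$. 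But the components of $M\setminus Z$ need not separate at a linear rate near $Z$: for a subanalytic non-graph hypersurface, a point $x\in M_1$ can lie at distance only about $\dist(x,Z)^r$ from another component (with $r$ the {\L}ojasiewicz exponent of the regular separation), while $|f(x)|=|\rho(x)|^{m+1}\approx\dist(x,Z)^{m+1}$ there. The natural Whitney $\mathcal{C}^m$ compatibility estimate between the two components then requires roughly $m+1>rm$, which is false once $r$ is large; so the exponent must be chosen depending on $r$, not just on $m$. This is precisely what the paper's Lemma~\ref{lem-chi} (a cutoff $\chi$ separating the components with $|D^\mu\chi|\leq C\dist(\cdot,Z)^{-r|\mu|^3}$) and Lemma~\ref{lem-w} (choosing $L$ large so that $\phi^L\chi$ is ambiently $\mathcal{C}^m$) supply, and it is why the paper stresses that subanalyticity is essential and that the argument would fail for general singular hypersurfaces when $m\geq 1$. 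For $m=0$ your construction is fine.

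In Case~(2) your candidate function and your non-extension argument agree with the paper's, and you correctly flag the regularity across $E$ as the main obstacle; but the resolution is again the cutoff machinery rather than a pointwise comparison of $h^+|_M$ and $h^-|_M$. The paper multiplies $1/\phi_-$ by a cutoff $\chi$ that vanishes near $A^-$ (where the pole lives) and equals $1$ near $M$, bounds all derivatives of $\chi/\phi_-$ by negative powers of $\dist(\cdot,Z)$ using {\L}ojasiewicz's inequality and the regular separation of $A^-$ and $M$, and then absorbs the blow-up by a high power of $\phi_+$ via Lemma~\ref{lem-w}; the resulting $h^+-h^-$ agrees with your $g$ on $M$ but is exhibited as the restriction of an ambient $\mathcal{C}^m$ function. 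Your fallback reduction in the ``smooth transverse regime'' ($h^-|_M=\phi\cdot h^+|_M$ with $\phi$ smooth nonvanishing) cannot be the basis of the general argument, since by Proposition~\ref{prop-nograph} the set $A^+\cap A^-$ is forced to lie in $\sng{M}$, i.e.\ the comparison is needed exactly where $M$ fails to be smooth.
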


By a $\mathcal{C}^m$-smooth function on $M$, we mean a function
which is the restriction to $M$ of a $\mathcal C^m$-smooth function
on $\cx^n$. It should be noted that the assumption that the
hypersurface $M$ is subanalytic plays a crucial role in the proof.
While the notion of non-minimality and proper two-sided support
makes sense for singular hypersurfaces of more general types, the
proof of Theorem~\ref{thm-non-ext} would fail for more general
singular hypersurfaces when $m\geq 1$.

Note that if a hypersurface $M$ can be represented as a graph near a
point $p$, and $M$ is non-minimal at $p$, then the complex
hypersurface contained in $M$ must divide $M$ into two components
(see \cite{chirka:rado}). Hence, the assumption that $Z$ locally
divides $M$ in (1) of Theorem~\ref{thm-non-ext} is automatically
satisfied. However, in general, a subanalytic hypersurface cannot be
locally represented as the graph of a function. At such a non-graph
point $p\in M$, it is possible that $M$ is non-minimal, but the
complex hypersurface $A\subset M$ through $p$ does not locally
divide $M$ near $p$. In Sections~\ref{sec-m} and~\ref{sec-n} we
consider some examples of such hypersurfaces, in particular, we
prove the following.

\begin{theorem}\label{mainthm}\mbox{}
\begin{enumerate}
\item[(i)] For $n\geq 2$, let
    \[M_1=\{(z_1,\ldots,z_n)\in\mathbb C^n:
    \Re(z_1z_2)+|z_1|^2=0\}.\] Then $M_1$ is non-minimal at the origin,
    but if $f$ is a bounded CR function near the origin,
    then $f$ extends holomorphically to one side of $M_1$. Further, if
    $f$  is $\mathcal{C}^k$-smooth
    on $M$ near 0, then the extension is $\mathcal{C}^k$ up to the boundary.

\item[(ii)] For $n\ge 3$, let \[ N_1=\{(z_1,\ldots,z_n)\in \mathbb C^n: \Re(z_1 z_2 + z_1\overline
z_3)=0\}.\]
    Then $N_1$ is non-minimal at the origin, but every bounded CR function  on $N_1$ near the origin
    extends holomorphically to a full neighbourhood of the origin.
\end{enumerate}
Further, there are infinitely many biholomorphically inequivalent hypersurfaces $M\subset\cx^n$,
$N\subset\cx^n$ with  properties described in (i) and (ii).
\end{theorem}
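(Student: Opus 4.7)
\emph{Non-minimality.} Setting $z_1=0$ in either defining function yields $0$, so the complex hyperplane $\{z_1=0\}$ lies in both $M_1$ and $N_1$ and passes through the origin, giving non-minimality at $0$. The singular loci are contained in $\{z_1=z_2=0\}$, respectively $\{z_1=z_2=z_3=0\}$, each of codimension at least $2$ in the ambient hypersurface; this is what will allow Theorem~\ref{thm-removable}(ii) to pin down the boundary values of the extension on the singular set at the end of each argument.

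\emph{Part~(i).} The crucial observation is the factorization
\[
\rho := \Re(z_1 z_2) + |z_1|^2 = \abs{z_1 + \bar z_2/2}^2 - \abs{z_2}^2/4,
\]
from which I read off two facts: first, $\Omega^- := \{\rho<0\}$ is disjoint from $\{z_2=0\}$; second, for every fixed $(z_2^0,\ldots,z_n^0)$ with $z_2^0\neq 0$ the slice of $\Omega^-$ in the $z_1$-plane is an open disk $D(z^0)$ of radius $|z_2^0|/2$ centered at $-\bar z_2^0/2$, whose boundary circle lies in $\reg{M_1}$. For a bounded CR function $f$ near $0$, I would define the candidate extension on $\Omega^-$ by the slicewise Cauchy integral
\[
F(z_1^0,\ldots,z_n^0) := \frac{1}{2\pi i}\oint_{\partial D(z^0)} \frac{f(z_1,z_2^0,\ldots,z_n^0)}{z_1-z_1^0}\,dz_1.
\]
Holomorphy of $F$ in $z_1^0$ is automatic from the Cauchy kernel. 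Holomorphy in $z_j^0$ for $j\geq 3$ follows because the contour is independent of $z_j^0$ and $\del/\del\bar z_j$ is a CR vector field tangent to $M_1$, so the CR condition forces $\del f/\del\bar z_j=0$ on the contour. Holomorphy in $z_2^0$ is the delicate point: since $\partial D(z^0)$ depends non-holomorphically on $\bar z_2^0$, the boundary-motion term must be cancelled using the CR vector field on $M_1$ that involves both $\del/\del\bar z_1$ and $\del/\del\bar z_2$; the cleanest way to execute this cancellation is to reinterpret the slicewise formula as a global Bochner--Martinelli integral over $\reg{M_1}$ and invoke the weak CR condition from the definition. The Plemelj jump formula identifies the boundary values of $F$ with $f$ on $\reg{M_1}\setminus\{z_1=0\}$; Theorem~\ref{thm-removable}(ii), combined with boundedness, then extends this identification to all of $\reg{M_1}$. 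The $\mathcal{C}^k$-regularity of $F$ up to the smooth boundary is inherited from the smooth parameter dependence of the Cauchy kernel on the disks and their boundaries.

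\emph{Part~(ii).} A direct computation of the complex Hessian of $\rho_N := \Re(z_1 z_2 + z_1\bar z_3)$ shows that the only nonzero entries are $(\rho_N)_{1\bar 3}=(\rho_N)_{3\bar 1}=1/2$, so the Levi form restricted to the complex tangent space at a smooth point with $z_1\neq 0$ reduces, in the $(t_1,t_3)$-plane, to the indefinite quadratic form $\Re(t_1\bar t_3)$. By H.~Lewy's classical extension theorem for hypersurfaces with a Levi form of indefinite signature, every continuous CR function near such a smooth point extends holomorphically to a full two-sided neighbourhood. Since $\reg{N_1}\setminus\{z_1=0\}$ is connected near the origin, these local extensions glue to a single holomorphic $F$ on a punctured neighbourhood $U\setminus\{z_1=0\}$ of $0$; boundedness of $f$ transfers to $F$, and Riemann's removable singularity theorem extends $F$ across the codimension-one complex hypersurface $\{z_1=0\}$ to all of $U$.

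\emph{Infinitely many examples and main obstacle.} To exhibit infinitely many pairwise biholomorphically inequivalent examples, I would consider parameter families such as $M_1^{(k)} := \{\Re(z_1 z_2) + |z_1|^{2k}=0\}$ and $N_1^{(k)} := \{\Re(z_1^k z_2 + z_1^k\bar z_3)=0\}$ for integers $k\geq 1$, verify that the preceding arguments apply verbatim to each member, and distinguish them via biholomorphic CR-invariants of the singularity at the origin (for example, the order of contact between the contained hypersurface $\{z_1=0\}$ and the ``generic'' slice fibres). The principal technical obstacle throughout is establishing holomorphy of $F$ in the non-holomorphic parameter $z_2^0$ in part~(i); a secondary but nontrivial difficulty is propagating $\mathcal{C}^k$-regularity of $F$ uniformly up to the singular locus $\{z_1=z_2=0\}$, where the slice disks collapse to a point and so the Cauchy kernel degenerates---a regime that must be controlled by the quantitative estimate in Theorem~\ref{thm-removable}(ii).
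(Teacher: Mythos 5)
Your overall strategy (slicewise Cauchy integrals in part (i), gluing of local Lewy extensions plus Riemann's removable singularity theorem in part (ii)) is genuinely different from the paper's, which instead constructs an explicit family of analytic discs attached to $M_1$ (resp.\ $N_1\cap\{z_1=z_3\}$), pushes a thin Lewy one-sided neighbourhood of the smooth locus into a large region via the Kontinuit\"atssatz, and then proves single-valuedness of the continuation using Trapani's complex-retraction criterion and a Cousin decomposition. Unfortunately, in both parts your plan defers exactly the step that the paper's machinery exists to handle. In part (i), the assertion that the Cauchy integral over $\partial D(z^0)$ is holomorphic in $z_2^0$ \emph{and} has boundary value $f$ is equivalent to showing that $f$ restricted to each slice circle satisfies the moment conditions $\oint_{\partial D(z^0)} f\, z_1^k\, dz_1=0$ for all $k\geq 0$, i.e.\ that $f$ already extends holomorphically into each (large, non-shrinkable) attached disc. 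This does not follow from the distributional CR condition or from Baouendi--Tr\'eves approximation, which is only local, and the appeal to the Bochner--Martinelli jump formula does not close the gap: the jump formula produces $f=\mathrm{bv}f^+-\mathrm{bv}f^-$ with \emph{two} holomorphic pieces on the two sides, and one must still prove that one of them continues across $M_1\setminus\{z_1=0\}$ into the other side --- which is precisely the envelope-of-holomorphy statement (Propositions~\ref{prop-pathext} and~\ref{prop-schlicht}) that the paper proves with discs, the Kontinuit\"atssatz and a monodromy argument.

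The gap in part (ii) is sharper. Local two-sided Lewy extensions at smooth points of $N_1\setminus\{z_1=0\}$ glue (by boundary uniqueness) only to a holomorphic function on a \emph{thin} neighbourhood $\omega$ of $N_1\setminus\{z_1=0\}$, not on a punctured ball $U\setminus\{z_1=0\}$. Since $F$ is then not defined on any deleted neighbourhood of a point of $\{z_1=0\}$, Riemann's removable singularity theorem simply does not apply. The missing content is Propositions~\ref{hullprop2} and~\ref{prop-envelope2}: that the envelope of holomorphy of an arbitrarily thin $\omega$ contains $B(\delta)\setminus\{z_1=0\}$, which again requires the translated disc family $T_\tau(D_{w'})$, the Kontinuit\"atssatz, the deformation-of-loops monodromy argument, and the Cousin splitting $f=f^+-f^-$ so that each summand can be continued from its own side. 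Finally, for the ``infinitely many inequivalent examples'' your family $\{\Re(z_1z_2)+|z_1|^{2k}=0\}$ falls outside the class $\Re(z_1z_2)+|z_1|^2\gamma(z)=0$ with $\gamma(0)\neq 0$ treated by Theorem~\ref{thm:M} (it corresponds to $\gamma$ vanishing at $0$), so ``applies verbatim'' is not justified; the paper instead varies the finite smoothness class of $\gamma$ and uses the smoothness of $M^*$ as the distinguishing biholomorphic invariant.
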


Consequently, {\em minimality is not a necessary condition for the local one-sided holomorphic extension of
all CR functions on a singular hypersurface.} This is in striking contrast with smooth or graph-like
hypersurfaces.

In part (i), when $f$ is merely bounded, the boundary value on $M_1$
of the extension should be understood in the sense of distributions
on the smooth part, see $\S$\ref{sec-bv} for details. Part (ii)
implies that every bounded CR function on $N_1$ near 0 is actually
real-analytic. The proof of Theorem~\ref{mainthm} is based on the
construction of the local envelope of holomorphy of an arbitrarily
thin one-sided neighbourhood of $M\setminus\{z_1=0\}$ (resp.
$N\setminus\{z_1=0\}$), followed by an application of the Lewy
extension theorem. The envelopes are obtained by an explicit
construction of analytic discs and the Kontinuit\"atssatz.

In the smooth (or even graph-like) situation, minimality is a
necessary, as well as sufficient condition for local
holomorphic extension of CR functions to one side. The
sufficiency of minimality for $\mathcal{C}^2$ hypersurfaces is
a celebrated result \cite{tu} of Tr\'{e}preau. This has been
generalized to graphs of continuous real-valued functions by
Chirka \cite{chirka:graphs}. It was shown in \cite{cs} that for
singular hypersurfaces, minimality is no longer a sufficient
condition for local one-sided holomorphic extension (see $\S$\ref{sec-twosided}).
Combining this with Theorem~\ref{mainthm} above we conclude that {\it for singular
hypersurfaces minimality is neither a necessary nor a
sufficient condition for one-sided holomorphic extension of CR
functions.}

{\em Acknowledgments.} The first author would like to thank
J.-P. Rosay, E. Bedford and M.-C. Shaw for helpful discussions, and
the organizers of the SCV semester at Institut Mittag-Leffler,
Djursholm for an invitation to their program, where part of this
work was done; very special thanks are due to Meera and Tanmay for
all their support. Research of the second author is supported in part 
by the Natural Sciences and Engineering Research Council of Canada.

\section{Subanalytic sets and hypersurfaces}\label{sec-subanalytic}

\subsection{Definitions}\label{sec-subanalytic-def}
Recall that a subset $E$ of a real analytic manifold $X$ is called {\em semianalytic} if it
is locally defined by finitely many real analytic equations and inequalities. More precisely,
for each $p\in X$, there is a neighbourhood $U$ of $p$, and real analytic in $U$
functions $f_i,g_{ij}$, where $i=1,\ldots, r$, $j=1,\ldots, s$, such that
\[ E\cap U =\bigcup_{i=1}^r\left(\bigcap_{j=1}^s\{x\in U\colon
g_{ij}(x)>0 \text{ and } f_i(x)=0\}\right).\] A real analytic set is
clearly semianalytic. A {\em subanalytic} subset $E$ of a real
analytic manifold $X$  is one which can be locally represented as
the projection of a semianalytic set. More precisely, for every $p\in X$, there exist a neighbourhood
$U$ of $p$ in $X$, a real analytic manifold $Y$, and a relatively compact semianalytic set
$Z\subset X\times Y$ such that $E\cap U=\pi(Z)$, where $\pi:X\times
Y\rightarrow X$ is the natural projection. In particular, semianalytic sets are subanalytic.
An excellent reference on semi- and subanalytic sets is \cite{bm}.

The {\it dimension} of a subanalytic set $E$, $\dim E$, is the maximal dimension of the germ of a real
analytic submanifold contained in $E$. If $E$ is a subanalytic subset of a manifold $X$, by $\reg{E}$ (the
regular points of $E$) we denote the set of points $p\in E$ near which $E$ is a (real-analytic) submanifold of 
$X$ of dimension $\dim E$. Its complement $E\setminus\reg{E}$ (the singular locus) is denoted by 
$\sng{E}$. 

We now define the class of objects in which we are interested. Let
$\mathcal{X}$ be a topological space, let $\mathcal{Y}\subset
\mathcal{X}$ be a locally closed subset (i.e., for each
$q\in\mathcal{Y}$ there is a neighbourhood $V$ of $q$ in
$\mathcal{X}$ such that $V\cap\mathcal{Y}$ is a closed subset of
$V$), and let $p\in\mathcal{Y}$. We will say that $\mathcal{Y}$ {\em
locally separates $\mathcal{X}$ at $p$} if the following holds: for
every neighbourhood $V$ of $p$ in $\mathcal{X}$, there is a {\em connected}
open neighbourhood $U$ of $p$ contained in $V$, such that the set
$U\setminus\mathcal{Y}$ has exactly two open connected components
$U^\pm$, and we have $\overline{U^+}\cap\overline{U^-}=\mathcal{Y}\cap\overline{U}$.
A smooth hypersurface in $\rl^N$ locally separates $\rl^N$ at each point, which is part of our intuitive
idea of a hypersurface. However, if $\mathcal{X}$ is a manifold, the notion of being locally separating
is much weaker than being a topological submanifold . For example, the set
$\mathcal{Y}=\{x\in\rl^4\colon x_1^2+x_2^2-x_3^3-x_4^2=0\}$ locally separates $\rl^4$ at each of
its points, but is not a topological submanifold of $\rl^4$ near 0.

We can now make the following definition.
\begin{defn}\label{def:sub}
A locally closed subanalytic subset $M$ of a real analytic manifold
$X$ is called a {\em subanalytic hypersurface} in $X$ if $M$ locally
separates $X$ at each point. \end{defn}

We note some elementary properties of subanalytic hypersurfaces.
\newcounter{property}
\begin{list}{\arabic{property}$^\circ$}{\leftmargin=0cm \itemindent=1cm}
\usecounter{property}
\item  Let $S$ be the set of smooth points of $M$, i.e., the set of points near which $M$ is a smooth manifold.
   Then $S$  is a manifold (possibly non-closed and possibly non-connected) of codimension one in $X$.  Indeed, 
   $S$ is a submanifold of $X$ near each of its points. But no submanifold of codimension two or more can locally 
   separate $X$.

\item Let $\Omega$ be a neighbourhood of $p\in M$ such that
    $\Omega\setminus M$ has two connected components
    $\Omega^\pm$. Then each component of
    $\Omega\cap\reg{M}$ is orientable, and it is possible to assign
    an orientation to each component in such a way that the positive normal points into
    $\Omega^+$ at each point.

To see this, consider the function $\rho$ on $X$ given
\[ \rho(x)=\begin{cases}
    \dist(x,M) &\text{ for $x\in\Omega^+$}\\
    -\dist(x,M) &\text{ for $x\in\Omega^-$}
    \end{cases}
    \]
Thanks to \cite[Prop.~7.4]{bm}, the function $\rho^2$ is real
analytic in a neighbourhood of $\reg{M}$. It is easy to see that
$\rho$ itself is smooth in a neighbourhood of $\reg{M}$ and
$\nabla\rho$ is non-zero on $\reg{M}$. We orient each component of
$\reg{M}$ so that $\nabla\rho$ is the positive normal.

\item Here and in the sequel we denote by $\mathcal{H}$ the Hausdorff
 measure of codimension $1$ in $\rl^N$. Let $M\subset \mathbb R^N$ be a subanalytic
hypersurface. We denote by $L^1_{\rm loc}(M)$ the space of functions
on $M$ that are locally integrable on $M$ with respect to the
measure $\mathcal{H}$ in the following sense: $f\in L^1_{\rm
loc}(M)$, if for every compact $K$ contained in an open set
$U\subset X$, where $U$  has the property that $M\cap U$ is closed
in $U$, we have $\int_{K\cap M}\abs{f}d\mathcal{H}<\infty$ (since
$M$ is locally closed, every point has a neighbourhood in which $M$
is closed). It will be clear from Lemma~\ref{lem-volumes} that every
bounded function on $M$ is in $L^1_{\rm loc}(M)$.
\end{list}

\subsection{Properties}
Subanalytic sets enjoy several closure properties: locally finite
unions and intersections, set-theoretic differences, complements,
topological closures and interiors, and proper projections onto linear
subspaces of subanalytic sets are subanalytic. The sets of regular
points and the singular locus of a subanalytic set are themselves
subanalytic.

A fundamental property of subanalytic sets \cite{ds} is that they
admit a stratification by real analytic manifolds. In fact, any
subanalytic set $X$ in an open set $\Omega\subset\rl^N$ can be
represented as a locally finite disjoint union of subanalytic
subsets $A_i$ of $\Omega$, where each $A_i$ is a (possibly
non-closed) real analytic submanifold of $\Omega$, and the family
$\{A_i\}$ satisfies the {\it frontier} condition: if
$A_k\cap\overline{A_l}$ is nonempty, then $A_k\subset
\overline{A_l}$, and $\dim A_k<\dim A_l$. For a subanalytic
hypersurface, which is a bounded subanalytic subset of $\rl^N$, the
stratification will consist of a finite number of strata. Moreover,
given a subanalytic set $E\subset X$, the stratification of $X$  may
be chosen to be compatible with $E$, i.e., $E$ (and therefore
$X\setminus E$) is a union of strata. The maximum dimension of a stratum in
a stratification of a subanalytic set $E$ equals $\dim E$, the dimension of
$E$, and therefore, it is independent of the choice of stratification.

An important property of subanalytic sets is {\em \L
ojasiewicz's inequality.} In a simple form that suffices for
our purposes it can be stated as follows: let $K$ be a subset
of $\rl^N$, and $f: K\to \mathbb \rl$ be a function such
that its graph is a compact subanalytic set in $ \rl^{N+1}$,
and let $X=f^{-1}(0)$. Then  there exist $C,r>0$ such that for
any $x\in K$,
\begin{equation}\label{eq:loj-ineq}
|f(x)|\ge C\,{\rm dist}(x,X)^r.
\end{equation}
\L ojasiewicz's inequality implies that any two subanalytic
sets $X$ and $Y$ in $\rl^N$ are {\it regularly situated}, i.e.,
for any $x_0\in X\cap Y$ there exist a neighbourhood $V$ of
$x_0$, and $C,r>0$ such that for any $x\in V$,
\begin{equation}\label{eq:regsit}
{\rm dist}(x,X)+ {\rm dist}(x,Y) \ge C\, {\rm dist}(x,X\cap Y)^r.
\end{equation}
For more details see, e.g., \cite{bm}. These inequalities are
crucial for our construction of smooth non-extendable CR
functions.

Now we recall some metric properties of subanalytic sets. Let
$\Gamma\subset\rl^N$ be an analytic subvariety (not necessarily
closed.) Suppose there exist an open set $U$ in a linear subspace
$E\subset\rl^N$, with $\dim E=k$, and an analytic map
$\phi:U\rightarrow E^\perp$ with values in the orthogonal complement
of $E$, such that $\Gamma$ is the graph of $\phi$ in $E\bigoplus
E^\perp=\rl^N$. We say that $\Gamma$ is an {\em $\epsilon$-analytic patch}
if the differential of $\phi$ is bounded by $\epsilon$, i.e.,
$\norm{d\phi(x)}<\epsilon$ for each $x\in U$.

Note that a smooth analytic submanifold of $\rl^N$ can be covered by
a family of $\epsilon$-analytic patches, thanks to the implicit
function theorem. The following result  shows that any bounded
subanalytic set can be {\em almost} covered by {\em finitely many}
$\epsilon$-analytic patches.

\begin{thma} Let $Y\subset\rl^N$ be a bounded
subanalytic set with $\dim Y=k$, and let $\epsilon>0$. Then there
are disjoint subanalytic sets
$\Gamma_1^\epsilon,\ldots,\Gamma_K^\epsilon$ contained in $Y$ such
that\begin{enumerate} \item[(i)] $\dim\left(Y\setminus \cup_{i=1}^K
\Gamma_i^\epsilon\right)<k$, and \item[(ii)] the $\Gamma_i^\epsilon$
are $\epsilon$-analytic patches.
\end{enumerate}
\end{thma}

We now draw a corollary from this result. Since we are
interested mainly in hypersurfaces, we assume that the
subanalytic sets are of codimension one, although analogous results
are easily proved for higher codimension.

\begin{lemma}\label{lem-volumes}
Let $M$ be a  bounded subanalytic subset of $\rl^N$, $\dim M = N-1$. There is a
constant $C>0$ such that if  $A$ is an affine subspace of $\rl^N$ with
$A\cap\Omega\subset M$, then
\[
\mathcal{H}(M\cap B(A,r))\leq C r^{N-\dim A-1},
\]
where $B(A,r)=\{x\in\rl^N\colon \dist(x,A)<r\}$.
\end{lemma}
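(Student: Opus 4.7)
The plan is to reduce the estimate to an elementary Euclidean volume bound for the tubular neighbourhood of an affine subspace in $\rl^{N-1}$, after first decomposing $M$ into a finite number of nearly-flat analytic patches.

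I would start by fixing some $\epsilon<1$ (any value works) and applying the covering result (thma) to the bounded $(N-1)$-dimensional subanalytic set $M$. This furnishes disjoint $\epsilon$-analytic patches $\Gamma_1,\dots,\Gamma_K\subset M$ such that $Y:=M\setminus\bigcup_{i=1}^K\Gamma_i$ has $\dim Y<N-1$. By the stratification property of subanalytic sets recalled in this section, $Y$ decomposes as a finite disjoint union of smooth real analytic submanifolds of dimension strictly less than $N-1$, hence $\mathcal{H}(Y)=0$. It then suffices to bound $\mathcal{H}(\Gamma_i\cap B(A,r))$ for each $i$.

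Each $\Gamma_i$ is, by definition, the graph of a real analytic map $\phi_i:U_i\to E_i^\perp$ over an open subset $U_i$ of an $(N-1)$-dimensional linear subspace $E_i$, with $\|d\phi_i\|<\epsilon$. Writing $\pi_i$ for the orthogonal projection onto $E_i$, the area formula together with the bound on the Jacobian of the graph map gives
\[
\mathcal{H}(\Gamma_i\cap B(A,r))\le\sqrt{1+\epsilon^2}\cdot\mathcal{H}^{N-1}\bigl(\pi_i(\Gamma_i\cap B(A,r))\bigr).
\]
The projection is distance-decreasing, so $\pi_i(B(A,r))\subset B(\pi_i(A),r)$, and $\pi_i(A)$ is an affine subspace of $E_i$ of dimension at most $\dim A$. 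Boundedness of $M$ ensures that $\pi_i(\Gamma_i)$ lies in a fixed ball $B_{E_i}(0,R)$, where $R$ depends only on $M$.

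The estimate therefore reduces to the elementary Euclidean fact that for any $k$-dimensional affine subspace $L$ of $E_i\cong\rl^{N-1}$, the $(N-1)$-volume of $B(L,r)\cap B_{E_i}(0,R)$ is at most $c\,R^k r^{N-1-k}$ for a dimensional constant $c$. Applying this with $L=\pi_i(A)$ and $k=\dim\pi_i(A)\le\dim A$, summing over the $K$ patches, and combining with the trivial bound $\mathcal{H}(M\cap B(A,r))\le\mathcal{H}(M)<\infty$ when $r$ is large, will give the stated estimate with a single constant $C$ depending only on $M$. The one point requiring care is the case $\dim\pi_i(A)<\dim A$: for $r\le 1$ the exponent $N-1-\dim\pi_i(A)$ exceeds $N-1-\dim A$, so $r^{N-1-\dim\pi_i(A)}\le r^{N-1-\dim A}$ and the bound goes the right way, while for $r\ge 1$ the trivial bound by $\mathcal{H}(M)$ dominates since $r^{N-1-\dim A}\ge 1$.
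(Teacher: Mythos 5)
Your proposal is correct and follows essentially the same route as the paper's proof: decomposition of $M$ into finitely many $\epsilon$-analytic patches via the quoted covering result, orthogonal projection onto the base subspace $E_i$, and the elementary tube-volume estimate combined with the $\sqrt{1+\epsilon^2}$ Jacobian bound from the area formula. You are in fact slightly more careful than the paper at the one delicate point, namely the possibility that $\dim\pi_i(A)<\dim A$, which the paper dismisses by asserting injectivity of $\pi_i$ but which your small-$r$/large-$r$ case split handles cleanly.
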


\begin{proof} Let $\mathbb{B}$ be a large enough ball in $\rl^N$
such that $M\subset\mathbb{B}$. Fix $\epsilon>0$, and using the
theorem quoted before this lemma, write ${M}$ as the disjoint union
of a finite number $K$ of $\epsilon$-analytic patches
$\{\Gamma_i\}_{i=1}^K$ each of dimension $N-1$, and a subanalytic
set $R$, with $\dim R< N-1$. Since $\mathcal{H}(R)=0$, it follows
that,
\[\mathcal{H}(B(A,r)\cap {M}) = \sum_{i=1}^K \mathcal{H}(B(A,r)\cap
\Gamma_i).\] Therefore, to prove the claim it is sufficient to
show that for each $i$ we have \begin{equation} \label{eq-piece}
\mathcal{H}(B(A,r)\cap \Gamma_i) \leq  C\cdot r^{N-\dim A-1},
\end{equation} for some constant $C$ independent of $A$. We fix such
an $i$.

There is an $N-1$ dimensional subspace $E_i$ of $\rl^N$, an open set
$U_i\subset E_i$, and a real analytic map $\phi_i:U_i\rightarrow
E_i^\perp$ such that $\Gamma_i$ is the graph of $\phi_i$ in
$E_i\bigoplus E_i^\perp$. Note that $U_i\subset \mathbb B$ automatically holds.
Denote by $\pi_i$ the orthogonal projection
from $\rl^N$ to $E_i$. Then, $\pi_i(\Gamma_i)=U_i$, and
\begin{align*}
\pi_i\left(B(A,r)\cap \Gamma_i\right)&\subset \pi_i(B(A,r))\cap U_i \\
&\subset B(\pi_i(A),r)\cap U_i,
\end{align*}
since $\pi_i$ does not increase distances. Note that $\dim
\pi_i(A)=\dim A$, since $\pi_i$ is injective on $\Gamma_i$. It is
easy to see that
\[ \mathcal{H}\left( B(\pi_i(A),r)\cap U_i\right)\leq C\cdot r^{N-\dim A-1},\]
where  $C$ depends only on the radius of the ball $\mathbb{B}$
but is independent of $A$ and $r$.

Therefore, 
\begin{align*}
\mathcal{H}(B(A,r)\cap \Gamma_i)& =\int_{\pi_i(B(A,r)\cap \Gamma_i)}
 \sqrt{1+ \abs{d \phi_i}^2}d\mathcal{H}(x)\\ &\leq\sqrt{1+\epsilon^2}\cdot
\mathcal{H}(\pi_i(B(A,r)\cap \Gamma_i))\\ &\leq C\sqrt{1+\epsilon^2}r^{N-\dim A-1},
\end{align*}
which proves the result.
\end{proof}

\section{CR functions and removable singularities} \label{sec-removable}

\subsection{CR functions on singular hypersurfaces}\label{sec-CR}
Recall that for a smooth real hypersurface $M$ in a complex manifold
$X$, a function $f$ on $M$ is called {\em CR} if it satisfies the
tangential Cauchy-Riemann equations $\overline {\partial}_b f=0$.
More generally, we can consider distributions which satisfy this
equation. If $f\in L^1_{\rm loc}(M)$ and $M$ is orientable, the
tangential Cauchy-Riemann equations can be rewritten in the adjoint
form: $f$ is CR if and only if for each $\phi\in
\mathcal{D}^{n,n-2}(X)$ (the space of $\mathcal{C}^\infty$ forms
with compact support in $X$ of bidegree $(n,n-2)$) we have
$\int_{M} f \dbar \phi=0$.

We wish to generalize the notion of CR functions to singular
hypersurfaces. In \cite{cs}, we considered the class of
real-analytic hypersurfaces. The smooth part of such
hypersurfaces is always locally orientable, and this allows us
to define CR functions using the adjoint form of the
Cauchy-Riemann equations.

For the more general class of  subanalytic hypersurfaces considered
here, we define CR functions in the same way. Let $M$ be a
subanalytic hypersurface in $\cx^n$, $n\geq 2$, and let $f\in
L^1_{\rm loc}(M)$. We say that $f$ is {\em CR} at $p\in M$, if there
exists an open neighbourhood $\Omega$ of $p$ in $\cx^n$ such that
$\Omega\setminus M$ has exactly two connected components, $\Omega^+$
and $\Omega^-$, and for every $\phi \in
\mathcal{D}^{n,n-2}(\Omega)$, we have
\begin{equation}\label{eq-crdef}
\int_{\reg{M}\cap\Omega} f \dbar \phi=0,
\end{equation}
where each component of $\reg{M}\cap\Omega$ is oriented in such
a way that the positive normal points into $\Omega^+$. We say
that $f$ is CR on $M$ if it is CR at each point of $M$. When
$M$ and $f$ are smooth, this is equivalent to the tangential
Cauchy-Riemann equations $\dbar_b f=0$.

It follows from the definition that being CR is a local
property. If now $\omega$ is an open set of a subanalytic
hypersurface $M$, such that there exists a well-defined
orientation on $\omega$, then a simple argument involving
partition of unity shows that if $f$ is CR at every point of
$\omega$, then $f$ is CR on  $\omega$ in the sense that
\eqref{eq-crdef} holds for some open neighbourhood $\Omega$  of
$\omega$ and all forms $\phi$ with compact support in $\Omega$.

\subsection{Proof of Theorem~\ref{thm-removable}}
Let $M$ be a bounded subanalytic hypersurface in $\mathbb
C^n$. For a subset $E$ of $M$ and a function $f$ on $M$ define \[
S_f(E,r) = \esup_{\frac{r}{2}<\dist(z,E)<r}\abs{f(z)},\] i.e.,
$S_f(E,r)$ is the essential supremum (with respect to the measure
$\mathcal{H}$) of $\abs{f}$ over the points on $M$ which are at
distance at least $\frac{r}{2}$ and at most $r$ from $E$. We let
$CR(M)$ denote the space of CR functions on $M$.

Now let $E$ be a closed subanalytic subset of $M$ of dimension at
most $2n-2$. We fix a stratification of the subanalytic hypersurface
$M$ by finitely many disjoint subanalytic subsets of $\cx^n$ which
are real analytic submanifolds of $\cx^n$ satisfying the frontier
condition and compatible with $E$, i.e., $E$ and $M\setminus E$ are the
union of some strata. We have the following:

\begin{prop}\label{prop-cr} Let $f\in L^1_{\rm loc}(M)\cap \CR(M\setminus E)$,
let $A(p)$ be the stratum through $p\in M$, and let $k(p)=\dim A(p)$. If
at each $p\in E$ we have
\begin{equation}\label{eq-condcr} \lim_{r\rightarrow 0+}
r^{2n-k(p)-2}S_f(A(p),r)=0,\end{equation} then $f$ is CR on $M$.
\end{prop}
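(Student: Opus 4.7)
The strategy is Bochner's cutoff-function argument, combined with an inductive peeling of the strata of $E$ and the volume bound of Lemma~\ref{lem-volumes}. The plan is to reduce the proposition to the case of a single top-dimensional stratum, and then to dispose of that stratum by the classical Bochner identity together with the growth hypothesis \eqref{eq-condcr}.

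For the reduction, let $A$ be a top-dimensional stratum of $E$. By the frontier condition, no other stratum of $E$ meets $\overline{A}$ at a point of $A$, so $A$ is open in $E$ and $E' := E \setminus A$ is closed, subanalytic, and consists of strictly fewer strata than $E$. Since the strata of $E'$ are among the original strata, the hypothesis \eqref{eq-condcr} still holds on $E'$. Induction on the number of strata therefore reduces the proposition to the following claim: if $A$ is a top-dimensional stratum of $E$, then $f \in \CR(M \setminus E)$ forces $f \in \CR(M \setminus E')$. Because being CR is a local property, I would fix a point $p^* \in A$ and work in a neighbourhood $\Omega$ of $p^*$ small enough to be disjoint from every other stratum of $E$.

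Given $\phi \in \mathcal{D}^{n,n-2}(\Omega)$, introduce the cutoff $\chi_\e(z) := \psi(\dist(z, A)/\e)$, where $\psi:\rl \to [0,1]$ is smooth with $\psi \equiv 0$ on $[0, 1/2]$ and $\psi \equiv 1$ on $[1, \infty)$. Then $\chi_\e \phi$ is compactly supported in $\Omega \setminus A \subset \Omega \setminus E$, where $f$ is CR, so the defining identity \eqref{eq-crdef} together with the Leibniz rule yields
\begin{equation*}
\int_{\reg{M} \cap \Omega} f \, \chi_\e \, \dbar \phi \;=\; -\int_{\reg{M} \cap \Omega} f \, \dbar \chi_\e \wedge \phi.
\end{equation*}
As $\e \to 0$, the left side tends to $\int_{\reg{M} \cap \Omega} f \, \dbar \phi$ by dominated convergence (using $f \in L^1_{\mathrm{loc}}(M)$ and the compactness of $\supp \dbar \phi$), so it remains to show that the right side tends to zero. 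On the shell $S_\e := \{\e/2 < \dist(z,A) < \e\}$ one has the bounds $|\dbar \chi_\e| \le C/\e$ and $|f| \le S_f(A, \e)$ almost everywhere with respect to $\mathcal{H}$, whence
\begin{equation*}
\left| \int_{\reg{M} \cap \Omega} f \, \dbar \chi_\e \wedge \phi \right|
\;\le\; \frac{C \, \norm{\phi}_\infty}{\e} \, S_f(A, \e) \, \mathcal{H}(\reg{M} \cap S_\e \cap \Omega).
\end{equation*}

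The critical input is the volume estimate $\mathcal{H}(\reg{M} \cap B(A, \e) \cap \Omega) \le C' \, \e^{2n-k-1}$, where $k = \dim A$; inserting it produces an upper bound of $C'' \, \e^{2n-k-2} \, S_f(A, \e)$, which tends to zero by hypothesis \eqref{eq-condcr} since $k = k(p^*)$, completing the proof. The principal obstacle is precisely this volume bound: Lemma~\ref{lem-volumes} is stated for an affine subspace $A \subset M$, whereas here $A$ is a general smooth real analytic (subanalytic) submanifold of $\cx^n$ contained in $M$. I would overcome this by invoking the Result quoted before Lemma~\ref{lem-volumes} to cover $A$ by finitely many $\e$-analytic patches (up to a subanalytic residual set of lower dimension, which contributes nothing at the leading order), comparing each patch to its tangent affine subspace at a base point with an error controlled by the patch parameter, and then applying Lemma~\ref{lem-volumes} to each affine approximation; summing over the finite cover yields the desired bound.
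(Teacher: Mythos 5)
Your overall strategy is the same as the paper's: a Bochner cutoff around a stratum of $E$, an induction over the strata (you peel off one top-dimensional stratum at a time, while the paper adds back all strata of dimension $\geq d$ at once via the sets $E^d$ --- an immaterial difference), and the final estimate $\left|\int f\,\dbar\chi_\e\wedge\phi\right|\leq C\e^{-1}S_f(A,\e)\,\mathcal{H}(\reg{M}\cap B(A,\e)\cap\Omega)$, closed by the volume bound and hypothesis \eqref{eq-condcr}. Up to the volume bound the argument is correct, and you rightly identify that Lemma~\ref{lem-volumes} as stated (for affine $A$) does not directly apply to a curved stratum.

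The gap is in your proposed repair of that lemma. An $\e$-analytic patch $\Gamma$ of $A$ over a domain of diameter $D$ lies within distance about $\e D$ of its tangent affine subspace $A_0$ at the base point, and this error is \emph{fixed} once the finite, $r$-independent cover is chosen. Hence all you can conclude is $B(\Gamma,r)\subset B(A_0,\,r+\e D)$, and Lemma~\ref{lem-volumes} then gives $\mathcal{H}(M\cap B(\Gamma,r))\leq C(r+\e D)^{2n-k-1}$, which is $O(1)$ as $r\to 0$ rather than the required $O(r^{2n-k-1})$; since the exponent $2n-k-1$ is exactly what makes the shell integral vanish, the proof does not close. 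The paper handles this differently: condition (i) in its choice of the neighbourhood $U$ is that $A(p)\cap U$ can be \emph{flattened} by a real-analytic diffeomorphism; such a map is locally bi-Lipschitz, so it distorts $\dist(\cdot,A)$ and the measure $\mathcal{H}$ only by bounded factors and preserves subanalyticity of $M$, and the affine case of Lemma~\ref{lem-volumes} then applies to the flattened picture. (A repair closer to your covering idea would be to cover $A$ by $O(r^{-k})$ balls of radius $r$ and apply Lemma~\ref{lem-volumes} with $A$ a single point to each, giving $r^{-k}\cdot r^{2n-1}=r^{2n-k-1}$.) Two smaller points: the residual set $R$ with $\dim R<k$ is not automatically negligible, since $B(R,r)$ has positive measure --- it needs the same kind of bound with a better exponent; and $\dist(\cdot,A)$ need not be smooth, so $\chi_\e$ should be built from a regularized distance or defined after flattening.
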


\begin{proof}[Proof of Proposition~\ref{prop-cr}]
For the given stratification (compatible with $E$), denote by $E^d$
the union of all strata contained in $E$ of dimension greater than
or equal to $d$. Then $E^0= {E}$, the inclusion $E^{d+1}\subset E^d$
holds, and $E^{2n-1}=\emptyset$. The proof will consist of an inductive process, in
which we assume that $f\in \CR((M\setminus E)\cup E^{d+1})$ and deduce that
$f\in \CR((M\setminus E)\cup E^{d})$. Since by hypothesis,
$f\in \CR((M\setminus E)\cup E^{2n-1})$ the proof will be completed in at
most $2n-1$ iterations of this process.

For $0\leq d\leq 2n-2$, assume $f\in \CR((M\setminus E)\cup E^{d+1})$, and let
$p\in E^d\setminus E^{d+1}$, so that $k(p)=d$. We fix a neighbourhood  $U$ of $p$ in
$\cx^n$, with the following properties : (i) $A(p)\cap U$ can be ``flattened" by a
real analytic diffeomorphism (possible since $A(p)$ is locally a manifold),
(ii) $U$ does not intersect any stratum of dimension less than $d$ (possible by the frontier
condition), (iii) $U\setminus M$ has exactly two components $U^\pm$ (possible since $M$ is
locally separating).

For convenience, let $A=A(p)$, and let $B(A,r)=\{x\in\cx^n\colon \dist(x,A)<r\}$.
It is easy to see that for $r>0$ small there is a cutoff function
$\psi_r$ supported in $B(A,r)$, such that $\psi_r\equiv 1$ in $B(A,\frac{r}{2})$ and
\[ \abs{\dbar\psi_r}= O\left(\frac{1}{r}\right).\]
Note that $\supp(\dbar\psi)\subset B(A,r) \setminus B\left(A,\frac{r}{2}\right)$.

We need to show the following: for every $\phi\in \mathcal{D}^{(n,n-2)}(U)$, we have
$\int_{\reg{M}\cap U} f\dbar\phi=0$, where $\reg{M}\cap U$ is
oriented as in equation~\eqref{eq-crdef}.  We write
$\phi=(1-\psi_r)\phi + \psi_r\phi$. Then $(1-\psi_r)\phi$ is
an $(n,n-2)$ form with support in $U\setminus A$. Since $(U\setminus
A)\cap M\subset (M\setminus E)\cup E^{d+1}$, and  by hypothesis $f$
is CR on $(M\setminus E)\cup E^{d+1}$, we have $\int_{\reg{M}}f
\dbar((1-\psi_r)\phi)=0$. Therefore,
\begin{align*}
\int_{\reg{M}\cap U}f\dbar\phi &= \int_{\reg{M}\cap U}f\dbar(\psi_r\phi)\\
&= \int_{\reg{M}\cap U}f \dbar\psi_r\wedge\phi +\int_{\reg{M}\cap
U}f\psi_r\dbar \phi.
\end{align*}
Since $\psi_r\rightarrow 0$ pointwise on $\reg{M}$ and $f\in
L^1_{\rm loc}(M)$, it follows easily from the dominated
convergence theorem that the second term approaches 0 as
$r\rightarrow 0+$. Therefore, it remains to show that the first
term approaches 0 as $r\rightarrow 0+$. Since
$\supp(\overline\partial\psi_r)\subset B(A,r)\setminus
B\left(A,\frac{r}{2}\right)$ and $\supp(\phi)\subset U$,
\begin{align*}\abs{\int_{\reg{M}\cap U}
f \dbar\psi_r\wedge\phi}&\leq C \esup_{z\in \left(B(A,r)\setminus
B\left(A,\frac{r}{2}\right)\right)\cap U} \abs{f(z)}\cdot
\frac{1}{r} \cdot \mathcal{H}\left(B(A,r)\cap \reg{M}\right)\\ &\leq C
\left(\esup_{{z\in \left(B(A,r)\setminus
B\left(A,\frac{r}{2}\right)\right)\cap U}}
\abs{f(z)}\right)\frac{1}{r}r^{2n-d-1} &\text{by
Lemma~\ref{lem-volumes}} \\ &\leq C S_f(A,r)r^{2n-d-2}.
\end{align*} Our result now follows from equation
\eqref{eq-condcr}.
\end{proof}

\begin{proof}[Proof of Theorem~\ref{thm-removable}]

{\it (i)} We fix some stratification of $M$ compatible with $E$. Then for
each $p\in E$ we have $S(A(p),r)\rightarrow 0$ as $r\rightarrow 0$.
However, since $k(p)\leq 2n-2$ for each point $p\in E$, we have
$2n-k(p)-2\geq 0$. Therefore,~\eqref{eq-condcr} is satisfied, and the result follows from
Proposition~\ref{prop-cr}.

{\it (ii)} We first verify that $f\in L^1_{\rm loc}(M)$ (this is true even if $E$ has codimension one
in $M$). We fix a stratification of $M$ compatible with $E$. Let
$p\in E$, and let $A$ be the stratum of $M$ through $p$. We define
for $\nu\in \mathbb{N}$,
\[ K_\nu=\{z\in M\colon 2^{-(\nu+1)}<\dist(z,A)\leq
2^{-\nu}\}.\] By Lemma~\ref{lem-volumes},
\begin{align*}
\mathcal{H}(K_\nu)&\leq \mathcal{H}(B(A,2^{-\nu})\cap M)\\
&=  O( (2^{-\nu})^{2n-\dim A -1})\\
&=O(  2^{-\nu}). \end{align*}
Note also that if $z\in K_\nu$,
$\abs{f(z)}= O(2^{\nu\alpha})$. Therefore,
\begin{align*} \int_{\reg{M}\cap
B(A,1)}\abs{f(z)}d\mathcal{H}(z)&\leq
C\sum_{\nu=0}^\infty2^{\nu\alpha}\cdot2^{-\nu}\\
&=\frac{ C}{1-2^{\alpha-1}} \\
&< \infty.
\end{align*}

To verify that $f$ is CR , we note that for any $p\in E$, we have
$S(A(p),r)\leq Cr^{-\alpha}$ and $2n-k(p)-2\geq 1$. Therefore, near $p$,
$S(A(p),r)r^{2n-k(p)-2}\leq Cr^{1-\alpha}\rightarrow 0$ as
$r\rightarrow 0+$, and the result again follows from Proposition~\ref{prop-cr}.
\end{proof}

Let $M$ be a smooth hypersurface in $\cx^n$, minimal at a
point $p\in M$. Let $E$ be a smooth real submanifold of $\cx^n$ of
codimension two, and $E\subset M$. Then, after shrinking $M$,
$M\setminus E$ has two components $M^\pm$. Let $f$ be the function
on $M$ which is 1 on $M^+$ and 0 on $M^-$. Then $f\in L^\infty_{\rm
loc}(M)\subset L^1_{\rm loc}(M)$, and $f\in \CR(M\setminus E)$. We
claim that $f$ is {\em not} CR. Indeed, if $f$ were CR, it would
extend holomorphically to a one-sided neighbourhood of  $p$ by
Tr\'{e}preau's theorem. Then the extension had to be both
identically 1 and identically 0, which is a contradiction.
Therefore, $f$ is not CR, and {\em singularities  of codimension 1
are not in general removable for locally bounded functions.}

\subsection{CR functions as boundary values of holomorphic functions}\label{sec-bv}

Let $M$ be a smooth real hypersurface in $\mathbb C^n$. Let $f$ be a
bounded holomorphic function defined on one side of $M$ (more
generally we may assume that $f$ is of polynomial growth near $M$).
Then there is a well-defined CR distribution ${\rm bv}\, f$ on $M$
acting on a test function $\chi$ on $M$ by
$$
{\rm bv}\, f(\chi) = \lim_{t\rightarrow 0+}\int_{M} f(z+t\nu(z))\chi(z)
d\mathcal{H},
$$
where $\nu$ is the unit normal vector to $M$ with a suitable
orientation. The distribution ${\rm bv}\, f$ is called the boundary
value of $f$ on $M$. In particular, if $f$ is continuous up to $M$,
then simply ${\rm bv}\, f = f|_M$. For more details, see \cite{ber} or \cite{kytbook}.
Conversely, given a distribution $u$ on $M$, we say that $u$ extends
to a holomorphic function $f$ on one side of $M$, if ${\rm bv}\, f =
u$.

For a subanalytic hypersurface $M$ and a one-sided connected
neighbourhood $\omega$ of $M$ we say that a holomorphic function
$F$ on $\omega$ is the holomorphic extension of a CR function $f$ on
$M$ if $f|_{\reg{M}}$ is the boundary value of $F$ on the smooth
hypersurface $\reg{M}$ in the sense of the previous paragraph.

Just as for smooth hypersurfaces, continuous boundary values of
holomorphic functions on subanalytic hypersurfaces are CR. More
precisely, the following holds.

\begin{prop}\label{prop-bv} Let $M$ be a subanalytic hypersurface in $\cx^n$ and
let $\Omega$ be a domain in $\cx^n$ such that $\Omega\setminus M$
has two components $\Omega^+$ and $\Omega^-$. Let $f$ be a
continuous function on $\Omega^+\cup(M\cap\Omega)$ such that
$f|_{\Omega^+}$ is holomorphic. Then $f|_{M\cap \Omega}$ is CR.
\end{prop}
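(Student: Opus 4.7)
Write $F := f|_{\Omega^+}$, so that $F$ is holomorphic on $\Omega^+$ and extends continuously to $\Omega^+\cup(M\cap\Omega)$. Fix $\phi\in\mathcal D^{n,n-2}(\Omega)$; the goal is to show $\int_{\reg M\cap\Omega}f\,\dbar\phi=0$. The computation underlying the proof is the bidegree identity
\[
d(F\chi\,\dbar\phi)=F\,\dbar\chi\wedge\dbar\phi\quad\text{on }\Omega^+,
\]
valid for any smooth real-valued $\chi$ on $\Omega$. Indeed, $dF=\del F$ because $F$ is holomorphic, so $dF\wedge\dbar\phi$ has bidegree $(n+1,n-1)=0$; similarly $\del\chi\wedge\dbar\phi=0$ and $d\dbar\phi=\del\dbar\phi=0$ for the same type reason.

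Let $\Sigma:=\sng M$, which is subanalytic of dimension at most $2n-2$, and introduce a smooth cutoff $\chi_r$ with $\chi_r\equiv 0$ on $B(\Sigma,r/2)$, $\chi_r\equiv 1$ off $B(\Sigma,r)$, and $|d\chi_r|=O(r^{-1})$. Apply Stokes' theorem to $F\chi_r\,\dbar\phi$ on the smooth inner approximation $\Omega^+_\delta:=\{z\in\Omega^+:\dist(z,M)>\delta\}$, and let $\delta\to 0^+$. Since $\chi_r$ vanishes near $\Sigma$, the relevant part of the boundary is contained in $\reg M$, where $F$ is continuous; this yields
\[
-\int_{\reg M\cap\Omega}f\chi_r\,\dbar\phi=\int_{\Omega^+}F\,\dbar\chi_r\wedge\dbar\phi,
\]
the sign appearing because the outward normal from $\Omega^+$ is opposite to the chosen positive normal on $\reg M$.

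Finally let $r\to 0^+$. The left-hand side converges to $-\int_{\reg M\cap\Omega}f\,\dbar\phi$ by dominated convergence ($\chi_r\to 1$ a.e.\ on $\reg M$, and $\mathcal H(\reg M\cap\supp\phi)<\infty$ by Lemma~\ref{lem-volumes}). For the right-hand side,
\[
\left|\int_{\Omega^+}F\,\dbar\chi_r\wedge\dbar\phi\right|\le C\,r^{-1}\cdot\bigl|B(\Sigma,r)\cap\supp\phi\bigr|,
\]
where $|\cdot|$ denotes Lebesgue measure in $\cx^n$. The tube estimate $|B(\Sigma,r)\cap K|=O(r^{2n-\dim\Sigma})$ for compact $K$ follows from the $\epsilon$-analytic patch decomposition (the Result quoted before Lemma~\ref{lem-volumes}) together with an induction on dimension for the lower-dimensional remainder: each $\epsilon$-patch of dimension $\dim\Sigma$ is a Lipschitz graph whose $r$-tube has volume $O(r^{2n-\dim\Sigma})$. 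Since $\dim\Sigma\le 2n-2$, the right-hand side is $O(r^{2n-\dim\Sigma-1})=O(r)\to 0$, and the proposition follows. The main subtlety is the Stokes step, which is legitimate only because the cutoff $\chi_r$ removes the singular part of $\partial\Omega^+$, reducing matters to the smooth case where continuity of $F$ up to $\reg M$ suffices.
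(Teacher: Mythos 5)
Your argument is correct, but it is a genuinely different route from the one in the paper. The paper first reduces, via Theorem~\ref{thm-removable}(ii), to showing that $f$ is CR near each $(2n-2)$-dimensional stratum $A$ of $\sng{M}$; there it excises a tube $B(A,\epsilon_j)$ and caps the resulting hole in $\reg{M}$ with the piece $A_j^+=\{\dist(\cdot,A)=\epsilon_j\}\cap\overline{\Omega^+}$, proves that $f$ is CR on the capped hypersurface $M_j^+$ by a Diederich--Pinchuk-type Stokes argument (which needs the Baouendi--Tr\'eves approximation theorem to integrate by parts on the smooth part $M_j$), and then lets $j\to\infty$ using $\mathrm{Vol}(A_j)\to 0$. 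You instead integrate by parts in the \emph{ambient} domain $\Omega^+$, where the bidegree identity $d(F\chi_r\,\dbar\phi)=F\,\dbar\chi_r\wedge\dbar\phi$ is immediate from holomorphy of $F$, with a cutoff $\chi_r$ killing a tube around all of $\sng{M}$ at once; the error is then a $2n$-dimensional integral controlled by $r^{-1}\cdot\abs{B(\sng{M},r)\cap\supp\phi}=O(r^{2n-\dim\sng{M}-1})=O(r)$. This avoids both Theorem~\ref{thm-removable} and Baouendi--Tr\'eves entirely, at the cost of two items you should make explicit: (a) the Lebesgue tube-volume estimate is not literally Lemma~\ref{lem-volumes} (which bounds the $\mathcal{H}^{2n-1}$-measure of $M$ near an \emph{affine} subspace), though it does follow from the same $\epsilon$-analytic patch decomposition by the induction you sketch; and (b) the passage $\delta\to 0^+$ in the Stokes step, which is justified because on $\supp(\chi_r\phi)$ the set $M$ coincides with $\reg{M}$ at definite distance from $\sng{M}$, so by property $2^\circ$ of $\S$\ref{sec-subanalytic-def} the signed distance is smooth there, its level sets converge in $\mathcal{C}^1$ to $\reg{M}$, and $F$ converges uniformly to $f$ by continuity on the compact set $\overline{\Omega^+}\cap\supp\phi$. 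The trade-off is that your proof exploits the full hypothesis that $f$ extends holomorphically to one side and so is specific to this proposition, whereas the paper's machinery (Proposition~\ref{prop-cr}) is built to handle functions that are merely CR on $M\setminus E$; for the statement at hand, your version is the more elementary and self-contained of the two.
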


\begin{proof}
By Theorem~\ref{thm-removable}(ii), we only need to consider the
case when $\sng{M}$ has codimension one in $M$. We fix a
stratification of $M$ compatible with $\sng{M}$. Thanks to
Theorem~\ref{thm-removable}(ii) again, it is sufficient to show that
$f$ is CR near every point $p$ of any stratum $A$ of dimension
$2n-2$ contained in $\sng{M}$. Fix $p$, and shrink $\Omega$ around
$p$ so that $\Omega\cap M$ is the disjoint union of $A\cap \Omega$
and $\reg{M}\cap\Omega$ (this is possible by the frontier
condition.) To prove that $f|_{M\cap \Omega}$ is CR we need to show
that
\begin{equation}
\int_{M^{\rm reg}\cap \Omega} f \dbar\phi =0
\end{equation}
for any $\phi \in \mathcal{D}^{n,n-2}(\Omega)$.

By Sard's theorem, we can find a sequence $\epsilon_j\searrow 0$
such that the smooth hypersurfaces $A_j=\{z\in\Omega\colon
\dist(z,A)=\epsilon_j\}$ meet $\reg{M}$ transversely. We set
$A_j^+=A_j\cap\overline{\Omega^+}$,  $ M_j= \{z\in
\reg{M}\colon\dist(z,A)\geq \epsilon_j\},$ and $M_j^+=M_j\cup
A_j^+$. Note that $M_j^+$ divides $\Omega$ into two open sets (non
necessarily connected)
\[\Omega_j^+=\{z\in\Omega^+\colon\dist(z,A)>\epsilon_j\}\]
and
\[ \Omega_j^-=\Omega\setminus{\overline{\Omega_j^+}}.\]

We orient $\reg{(M_j^+)}$ by declaring that at each point the normal
into the set $\Omega_j^+$ is the positively directed normal. This
allows us to define CR functions on $M_j^+$ in the usual way. Fixing
$j$ for the time, we claim that $f|_{M_j^+}$ is CR. Indeed, since
$M_j$ is smooth, the restriction of $f$ to the interior of $M_j$ is
CR (boundary value of a holomorphic function on a smooth boundary).
Moreover, $f|_{A_j^+}$ is also CR (restriction of a holomorphic
function). Thus, it remains to show that $f$ is CR near any point
$q\in A_j\cap M_j$. For this we adapt the argument in
\cite[Proposition~4]{dipi}. It suffices to show that there exists a
small neighbourhood $\omega$ of $q$ in $\mathbb C^n$ such that for
any form $\psi\in\mathcal{D}^{n,n-2}(\omega)$,
\begin{equation}\label{eq:dp0}
\int_{\reg{(M_j^+\cap\omega)}}f\dbar \psi
=\int_{\mathrm{int}(M_j)\cap \omega} f\dbar \psi +
\int_{\mathrm{int} (A_j^+)\cap\omega}f\dbar \psi=0.
\end{equation}

Let $\omega$ be a small enough ball with centre at $q$ in $\cx^n$,
so that, by the Baouendi-Tr\'{e}ves theorem
\cite[Thm~2.4.1]{ber} the CR function $f$ can be approximated
uniformly on $\reg{M}\cap\omega$ by a sequence of holomorphic
polynomials $f_\nu$ on $\cx^n$. Further, by Sard's Theorem, after
shrinking $\omega$, we can assume that the sphere $\partial\omega$
meets $M_j$ transversely, so that Stokes' theorem applies to the
domain $\mathrm{int}(M_j)\cap\omega$. Then
\begin{equation}\label{eq:dp-left}
\int_{\mathrm{int}(M_j)\cap \omega}f \dbar \psi =
\lim_{\nu\to\infty} \int_{\mathrm{int}(M_j)\cap \omega} f_\nu \dbar \psi =
\lim_{\nu\to\infty} \int_{\mathrm{int}(M_j)\cap \omega} d(f_\nu \psi) =
\int_{M_j\cap A_j\cap \omega} f\psi,
\end{equation}
where the last equality holds by Stokes' theorem.

Now we let $\{V_k\}$ be an exhaustion of
$\mathrm{int}(A_j^+)\cap\omega$ by smoothly bounded relatively
compact subdomains. Note that $f$ is actually holomorphic in a
neighbourhood of each $V_k$.  A parallel computation gives \[
\int_{V_k}f \dbar \psi =\int_{V_k}\dbar(f\psi) =\int_{V_k}d(f\psi)
=\int_{\partial V_k}f\psi.\]  
 Letting $k$ go to
infinity, we obtain, using the continuity of $f$:
\begin{equation}\label{eq:dp3}
\int_{\mathrm{int}(A_j^+)\cap \omega} f \dbar \psi = \int_{M_j\cap
A_j\cap \omega} f\psi.
\end{equation}
Since in equations~\eqref{eq:dp-left} and \eqref{eq:dp3} the integrals on the right are taken
with the opposite orientation, \eqref{eq:dp0} follows. Hence, $f$ is CR on $M_j^+\cap\omega$, and
therefore, on $M^+_j$.

Since $f\in {\rm CR}(M^+_j)$, it follows that for any $\phi\in\mathcal{D}^{n,n-2}(\Omega)$,
$$
\int_{{\rm int}(M_j)} f\dbar\phi = -\int_{{\rm int}(A_j)} f\dbar \phi.
$$
Therefore,
$$
\int_{M^{\rm reg}\cap\Omega} f\dbar \phi = \lim_{j\to\infty} \int_{{\rm int}(M_j)} f\dbar\phi =
- \lim_{j\to\infty} \int_{{\rm int}(A_j)} f\dbar \phi  =0,
$$
since $\mathrm{Vol}(A_j)\rightarrow 0$. This proves the proposition.
\end{proof}

If in Proposition~\ref{prop-bv} we assume more about $M$, then we can relax the condition that
$f$ is continuous up to the boundary. For example, Theorem~\ref{thm-removable}(ii) implies that
if $\sng{M}$ has codimension at least two, and $f$ is a bounded holomorphic function on
$\Omega^+$,  then ${\rm{bv}} f$ is a CR function. This gives examples of non-continuous CR functions
on $M$ as boundary values of holomorphic functions on $\Omega^+$.

\subsection{Jump formula}\label{sec-jump}
We recall some facts regarding the jump representation of CR functions, which in the smooth case goes 
back to the work of Andreotti-Hill \cite{ah} and Chirka \cite{chirka:cr}  (see also \cite{kytbook} 
for a detailed account). Let $\Omega$ be a domain in $\cx^n$ such that  $H^{0,1}(\Omega)=0$ (for example 
 we can take  $\Omega$ to be pseudoconvex). Let $M$ be a subanalytic hypersurface, which is closed in $\Omega$ such that
 $\Omega\setminus M$ consists of two connected components $\Omega^\pm$. Orient 
$\reg{M}$ such that the positive normal points into $\Omega^+$ at each point. Denote by $[M]$ the current
of integration of degree one in $\Omega$ defined by $[M]\phi=\int_{\reg{M}}\phi$ for compactly supported 
smooth $(2n-1)$ forms $\phi$ in $\Omega$ (this is well defined since $M$ has locally
finite $\mathcal{H}$-measure.) Let $[M]=[M]^{0,1}+[M]^{1,0}$ be the natural splitting of $[M]$ into currents 
of bidegree $(0,1)$ and $(1,0)$ respectively.  Let $f$ be a CR function on~$M$. Then the fact that $f$ is CR (i.e., 
equation \eqref{eq-crdef} holds)  can be expressed in the language of currents by the equation $\dbar(f[M]^{0,1})=0$.

Since $H^{0,1}(\Omega)=0$,   the equation
\begin{equation}\label{eq-dbar}
\dbar u = f[M]^{0,1}
\end{equation}
can be solved for a distribution $u$ on $\Omega$. We set $f^\pm= u|_{\Omega^\pm}$. Then $f^\pm$ are
holomorphic on $\Omega^\pm$, and a study of the local behavior of $f^\pm$ near $\reg{M}$ using 
the Bochner-Martinelli trasform (see, e.g., \cite{kytbook}) shows that
 the following {\it Jump formula} holds in the sense of distributions on $\reg{M}$:
\begin{equation}\label{eq-jump}
f={\mathrm{bv}} f^+ -{\mathrm{bv}} f^-.
\end{equation}
Since $f\in L^1_{\rm loc}(M)$, we have,  in fact, a stronger result
that for every compact $K\subset \Omega$, 
\[
  \lim_{\epsilon\to 0^+} \int_{\reg{M}\cap K} \abs{f^+(\zeta +\epsilon\nu(\zeta)) -
  f^-(\zeta - \epsilon\nu(\zeta)) -f(\zeta)} d\mathcal{H}(\zeta) =0,
\]
 where $\nu(\zeta)$ is the unit normal vector to~$\reg{M}$ at
$\zeta\in M$.

\section{Non-extendable CR functions: Proof of Theorem~\ref{thm-non-ext}}
\label{sec-non-ext}

\subsection{Preliminaries}
In this section we give two sufficient conditions for the existence
of CR functions on a subanalytic hypersurface which do not admit
local holomorphic extensions to either side. The first condition is
non-minimality together with an additional topological assumption.
The second situation when non-extendable CR functions arise is due
to two-sided support.

Throughout the section $\mu$ will denote the multi-index
$\mu=(\mu_1,\ldots,\mu_n)$, where the $\mu_j$ are non-negative
integers, and $\abs{\mu}=\mu_1+\ldots+\mu_n$, further,
\[
D^\mu := \frac{\partial^{\abs{\mu}}}{\partial x_1^{\mu_1}\partial x_2^{\mu_2}\ldots \partial x_n^{\mu_n}}
\]
will denote the  operator of partial differentiation of order $\mu$.
With this notation the {\em Leibniz formula} takes the form
\begin{equation}\label{eq-leibniz}
D^\mu(fg)= \sum_{k\leq \mu}\frac{\mu!}{k!(\mu-k)!} D^kf\;D^{\mu-k}g,
\end{equation}
where $\mu!=\mu_1!\mu_2!\ldots\mu_n!$ and $k\leq \mu$ means that
$k_j\leq \mu_j$ for each $j=1,\ldots,n$.

 For an open set $\Omega\subset\rl^n$, and for an integer
$m\geq 0$, we denote as usual by $\mathcal{C}^m(\Omega)$ the space
of functions on $\Omega$ whose partial derivatives of order $\leq m$
exist and are continuous. By $\mathcal{C}^m_b(\Omega)$ we denote the
subspace of  $\mathcal{C}^m(\Omega)$ consisting of functions with
{\em bounded} partial derivatives of order $\leq m$. For $f\in
\mathcal{C}^m(\Omega)$ and $x\in \Omega$, let
\[ \abs{f(x)}_{\mathcal{C}^m}= \sum_{\abs{k}\leq m} \abs{D^k f(x)}.\]
Then, using the Leibniz rule, it is easy to prove by induction that for $|\mu|=m$,
\begin{equation}\label{eq:difff}
 \left|D^\mu \left(\frac{1}{g(x)}\right)\right|\leq
C_m \frac{\left(|g(x)|_{\mathcal{C}^{m}}\right)^m} {{\left|g(x)\right|}^{1+m}}.
\end{equation}

The following two lemmas will be used in the proof of Theorem~\ref{thm-non-ext}.

\begin{lemma}\label{lem-w}
Let $\Omega\Subset\rl^n$ be a bounded domain, and $E$ be a closed subset of $\Omega$. 
Let $m\ge 0$. Suppose that $f\in \mathcal{C}^m(\Omega\setminus E)$, and $g\in\mathcal{C}^m_b(\Omega)$ 
be such that $\abs{g(x)}\leq C\dist(x,E)$ for some $C>0$, and for every multi-index $\mu$, $|\mu|\le m$, there 
are constants $B_\mu>0$, $p(\mu)\geq 0$ such that
\[
\abs{D^\mu f(x)}\leq B_\mu\;\dist(x,E)^{-p(\mu)}.
\]
Then there exists an integer $L$ such that the function
\begin{equation}\label{eq-defh}
 h=\begin{cases} fg^{L} & \text{ on $\Omega\setminus E$}\\
0 & \text{on $E$}
\end{cases}
\end{equation} is in $\mathcal{C}^m_b(\Omega)$.
\end{lemma}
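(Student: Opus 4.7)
\textbf{Proof plan for Lemma~\ref{lem-w}.} The strategy is to apply the Leibniz formula \eqref{eq-leibniz} repeatedly to $h = fg^L$ on $\Omega\setminus E$ and then to choose $L$ large enough so that every derivative $D^\nu h$ with $|\nu|\leq m$ vanishes to order at least two near $E$. With such decay, extending each derivative by $0$ across $E$ produces a $\mathcal{C}^m_b$ function, and the differentiation at points of $E$ can be verified by a difference-quotient argument.

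First I would estimate $D^\alpha(g^L)$ for $|\alpha|\leq m$. By iterating \eqref{eq-leibniz},
\[
D^\alpha(g^L)(x)=\sum_{\alpha_1+\cdots+\alpha_L=\alpha}\binom{\alpha}{\alpha_1,\ldots,\alpha_L}\prod_{i=1}^L D^{\alpha_i}g(x).
\]
In each summand, at most $|\alpha|\leq m$ of the multi-indices $\alpha_i$ can be nonzero, so at least $L-|\alpha|$ of the factors are $g$ itself; combining $|g(x)|\leq C\,\dist(x,E)$ with the uniform bound on the derivatives of $g$ from $g\in\mathcal{C}^m_b(\Omega)$ yields
\[
\bigl|D^\alpha(g^L)(x)\bigr|\leq C_{\alpha,L}\,\dist(x,E)^{L-|\alpha|}.
\]

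Second, applying \eqref{eq-leibniz} to $h=f\cdot g^L$ and using the hypothesis on $f$, I obtain, for $x\in\Omega\setminus E$ and $|\nu|\leq m$,
\[
|D^\nu h(x)|\leq \sum_{k\leq\nu}\binom{\nu}{k}B_k\,\dist(x,E)^{-p(k)}\cdot C_{\nu-k,L}\,\dist(x,E)^{L-|\nu|+|k|}\leq C\,\dist(x,E)^{L-m-N},
\]
where $N=\max_{|k|\leq m}(p(k)-|k|)$. Choosing any integer $L\geq m+N+2$ makes the exponent at least $2$ uniformly in $\nu$, and in particular all these derivatives are bounded on $\Omega$ (using that $\Omega$ is bounded) and tend to $0$ as $x\to E$.

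Third, I would prove by induction on $|\nu|\leq m$ that $D^\nu h$ exists at every point of $\Omega$, equals $0$ on $E$, coincides with the Leibniz formula on $\Omega\setminus E$, and is continuous and bounded. The base case $|\nu|=0$ follows from $|h(x)|\leq C\,\dist(x,E)^2\to 0$. For the inductive step, at $x_0\in E$ the difference quotient
\[
\frac{D^\nu h(x_0+te_j)-D^\nu h(x_0)}{t}=\frac{D^\nu h(x_0+te_j)}{t}
\]
has absolute value at most $C\,\dist(x_0+te_j,E)^2/|t|\leq C|t|\to 0$, so $\partial_j D^\nu h(x_0)$ exists and equals $0$; continuity at $x_0$ of the new derivative follows from the same estimate applied to $D^{\nu+e_j}h$. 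The main obstacle is the bookkeeping needed to pin down a single $L$ that works uniformly in the worst-case multi-index: once one observes that the worst exponent is controlled by the single quantity $N$, the inductive differentiation across the closed (and a~priori irregular) set $E$ reduces to the elementary difference-quotient estimate above.
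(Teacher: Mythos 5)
Your proposal is correct and follows essentially the same route as the paper: the Leibniz formula gives $|D^\alpha(g^L)|\le C\,\dist(\cdot,E)^{L-|\alpha|}$, one chooses $L$ to beat the worst-case exponent $\max_{|k|\le m}(p(k)-|k|)$ plus $m$, and the derivatives are extended by zero across $E$ via the difference-quotient estimate. The only difference is organizational — the paper first treats the case $f\in\mathcal{C}^m_b(\Omega\setminus E)$ with $h=fg^{m+1}$ and then reduces the general case to it by showing $fg^L\in\mathcal{C}^m_b(\Omega\setminus E)$ for $L\ge q(m)+m$, whereas you merge the two steps into a single choice of $L$ — which does not change the substance of the argument.
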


\begin{lemma}\label{lem-chi}Let $E_1, E_2$ be closed subanalytic subsets
of $\rl^n$, let $Z=E_1\cap E_2\not=\emptyset$ and let $z\in Z$. Then
there exist a neighbourhood $\Omega$ of $z$ in $\cx^n$, a function
$\chi\in\mathcal{C}^\infty(\Omega\setminus Z)$ with $0 \leq \chi\leq
1$, and a constant $r>0$ such that:
\begin{enumerate}
\item $\chi\equiv 1$   in a neighbourhood $U_1$ of $E_1\setminus Z$ in $\Omega\setminus Z$; and $\chi\equiv 0$ in a
neighbourhood $U_2$ of $E_2\setminus Z$ in~$\Omega\setminus Z$.
\item Neighbourhoods $U_1$ and $U_2$ can be chosen in such a way that there exists a constant $c$ such that if $x$ is
not in $U_1$ (resp.  $U_2$) then
\[
\dist(x,E_1)\geq c\;\dist(x,Z)^r \, \left(\text{resp.  }  \dist(x,E_2)\geq c\;\dist(x,Z)^r\right).
\]
\item For every integer $m\geq 0$, there is a constant $C>0$ such that for any $\mu$ with $\abs{\mu}=m$,
\begin{equation}\label{eq-dmuchi}
\abs{D^\mu(\chi(x))}\leq C\;\dist(x,Z)^{-rm^3}.
\end{equation}
\end{enumerate}
\end{lemma}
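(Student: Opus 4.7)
\noindent\textbf{Proof plan for Lemma~\ref{lem-chi}.} The plan is to build $\chi$ as a composition $\chi = \psi\circ F$, where $\psi:\rl\to[0,1]$ is a standard one-dimensional cutoff with $\psi\equiv 0$ on $(-\infty,1/4]$ and $\psi\equiv 1$ on $[3/4,\infty)$, and $F = d_2/(d_1+d_2)$ is the ratio of regularized distances to $E_1$ and $E_2$. By the Whitney-type regularized distance construction (see \cite[Chapter~VI, \S2]{stein}), for each closed set $E_i\subset\rl^n$ there exists $d_i\in\mathcal{C}^\infty(\rl^n\setminus E_i)$ with $c_1\dist(x,E_i)\leq d_i(x)\leq c_2\dist(x,E_i)$ and $|D^\mu d_i(x)|\leq C_\mu\dist(x,E_i)^{1-|\mu|}$ off $E_i$. \L ojasiewicz's regular-separation inequality \eqref{eq:regsit} then supplies a neighbourhood $\Omega$ of $z$ and an exponent $r\geq 1$ for which $d_1(x)+d_2(x)\geq c\,\dist(x,Z)^r$ on $\Omega\setminus Z$, so $F\in\mathcal{C}^\infty(\Omega\setminus Z)$.

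Take $U_1=\{x\in\Omega\setminus Z:d_2(x)>3d_1(x)\}$ and $U_2=\{x\in\Omega\setminus Z:d_1(x)>3d_2(x)\}$. Then $\chi\equiv 1$ on $U_1$ and $\chi\equiv 0$ on $U_2$; moreover, $d_i(x)\to 0$ as $x\to E_i\setminus Z$ while $d_{3-i}$ stays bounded below, so $U_i$ is a neighbourhood of $E_i\setminus Z$ in $\Omega\setminus Z$, proving (1). For (2): if $x\notin U_1$ then $d_2(x)\leq 3d_1(x)$, hence $d_1(x)\geq \tfrac14(d_1+d_2)(x)\geq (c/4)\,\dist(x,Z)^r$, and the case $x\notin U_2$ is symmetric.

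The content of the lemma is assertion (3). Since $\chi$ is smooth on $\Omega\setminus Z$, its derivatives are supported in the transition set $S=\{1/4\leq F\leq 3/4\}$, and on $S$ both $d_1(x)$ and $d_2(x)$ lie in $[\tfrac14(d_1+d_2)(x),\tfrac34(d_1+d_2)(x)]$ and are therefore mutually comparable, each bounded below by a constant multiple of $\dist(x,Z)^r$. Combined with the bound $|D^\mu d_i(x)|\leq C_\mu\dist(x,Z)^{r(1-|\mu|)}$ valid on $S$, the Leibniz formula \eqref{eq-leibniz} together with \eqref{eq:difff} applied to $g=d_1+d_2$ gives, by induction on $|\mu|$, an estimate of the form $|D^\mu F(x)|\leq C_\mu\,\dist(x,Z)^{-rq(|\mu|)}$ for some polynomial $q$. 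A Fa\`a di Bruno expansion $D^\mu(\psi\circ F)=\sum\psi^{(k)}(F)\prod_j D^{\mu^{(j)}}F$, summed over ordered partitions $\mu^{(1)}+\cdots+\mu^{(k)}=\mu$, together with the uniform bound on $|\psi^{(k)}|$, then yields $|D^\mu\chi(x)|\leq C\,\dist(x,Z)^{-r\tilde q(m)}$ for another polynomial $\tilde q$. After possibly enlarging $r$, the crude comparison $\tilde q(m)\leq m^3$ holds for all $m$, giving \eqref{eq-dmuchi}.

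The main obstacle is the bookkeeping in the last step: tracking the exponent of $\dist(x,Z)$ produced by iterated applications of \eqref{eq-leibniz} and \eqref{eq:difff}. The simplification is that only the behaviour on the transition set $S$ is relevant, and on $S$ every potentially troublesome factor $d_i^{1-|\mu|}$ or $(d_1+d_2)^{-1-|\mu|}$ is controlled by a fixed negative power of $\dist(x,Z)^r$. Consequently the exponent of $\dist(x,Z)^{-r}$ appearing in $D^\mu\chi$ grows at most polynomially with $|\mu|$, and the generous bound $m^3$ is comfortably enough.
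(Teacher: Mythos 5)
Your proposal is correct and follows essentially the same route as the paper: Stein's regularized distance, \L ojasiewicz's regular-separation inequality to bound $d_1+d_2$ from below by $\dist(x,Z)^r$ on the transition region, a one-variable cutoff composed with a ratio of regularized distances, and Leibniz/chain-rule bookkeeping for the derivative estimates (the paper uses $\lambda(\delta_1/\delta_2)$ and the iterated estimate \eqref{eq:difff} in place of your $\psi(d_2/(d_1+d_2))$ and Fa\`a di Bruno, obtaining the explicit exponents $|k|^2+1$, then $|\mu|^2$, then $|\mu|^3$). The only soft spot is that you leave the polynomial $q$ unspecified, but your observation that any polynomial growth of the exponent is absorbed by the stated $m^3$ bound after enlarging $r$ is exactly the point of the paper's computation.
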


In order not to interrupt the flow of  the proof, we postpone the
proofs of Lemmas \ref{lem-w} and \ref{lem-chi} to
$\S$\ref{lemchiproofsection}.

\subsection{Non-Minimality}\label{sec-non-min} Recall that a
subanalytic hypersurface $M$ is said to be non-minimal at a point
$p\in M$, if there is a {\em complex} hypersurface $A$ in a
neighbourhood $\omega$ of $p\in\cx^n$ which passes through the point
$p$ and is contained in $M$. In general, as shown by
Theorem~\ref{mainthm}, unlike the smooth case, non-minimality does
not directly imply the existence of  non-extendable CR functions.
The following proposition proves Theorem~\ref{thm-non-ext} if condition (1) holds.

\begin{prop}\label{prop-non-min}
Let $M$ be a subanalytic hypersurface in $\cx^n$ and $p\in M$. Suppose there is a germ of a complex hypersurface
$Z\subset M$ such that $p\in Z$,  $Z$ divides $M$ locally into more than one component at $p$. Then, for every
integer $m\geq 0$ there is a CR function on $M$ near $p$ of class $\mathcal{C}^m$ which does not extend as a
holomorphic function to either side.
\end{prop}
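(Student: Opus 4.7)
The plan is to take $f$ equal to $g^L$ on one local component of $M\setminus Z$ and $0$ on the remaining components, where $g$ is a holomorphic defining function for $Z$ and $L$ is chosen large so that the jump across $Z$ is absorbed into $\mathcal{C}^m$-regularity. Explicitly: since $Z$ is a germ of a complex hypersurface through $p$, choose local holomorphic coordinates so that $Z=\{g=0\}$ for some holomorphic $g$. By hypothesis $M\setminus Z$ has at least two components near $p$; designate one of them $M_1$ and let $M_2$ be the union of the others. Apply Lemma~\ref{lem-chi} with $E_1=\overline{M_1}$, $E_2=\overline{M_2}$ (both subanalytic, with $p\in E_1\cap E_2\supset Z$) to obtain a neighbourhood $\Omega$ of $p$ and a smooth $\chi$ on $\Omega\setminus Z$ with $\chi\equiv 1$ near $M_1$, $\chi\equiv 0$ near $M_2$, and $|D^\mu\chi(x)|\leq C\,\dist(x,Z)^{-r|\mu|^3}$. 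Set
\[
\tilde f(x):=g(x)^L\,\chi(x)\ \text{on}\ \Omega\setminus Z,\qquad \tilde f(x):=0\ \text{on}\ Z,
\]
and $f:=\tilde f|_M$. Since $|g|\leq C\,\dist(\cdot,Z)$ and $g$ is smooth, Lemma~\ref{lem-w} gives $\tilde f\in\mathcal{C}^m_b(\Omega)$ for $L$ sufficiently large in terms of $m$.

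\textbf{CR property.} On $M_1$ we have $f=g^L|_{M_1}$, the restriction of a holomorphic function, hence CR; on $M_2$ we have $f\equiv 0$, trivially CR. Thus $f\in\mathrm{CR}(M\setminus Z)$, and since $f$ is continuous on $M$ and vanishes on the subanalytic set $Z\subset M$ of codimension at least one, Theorem~\ref{thm-removable}(i) gives $f\in\mathrm{CR}(M)$ near $p$.

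\textbf{Non-extendability.} Let $\Omega^\pm$ be the two sides of $M$ in $\Omega$. Suppose for contradiction that some holomorphic $F$ on $\Omega^+$ satisfies $\mathrm{bv}\,F=f$ on $\reg{M}\cap\partial\Omega^+$. Define $u:=F$ on $\Omega^+$ and $u:=g^L$ on $\Omega^-$; then $u$ is holomorphic on each side and its jump $\mathrm{bv}\,u^+-\mathrm{bv}\,u^-=f-g^L$ vanishes on $M_1\cap\reg{M}$. By the jump representation of $\S$\ref{sec-jump} applied to the zero CR function on a neighbourhood of $M_1$ in $\reg{M}$, $u$ extends holomorphically across $M_1$; since $u\equiv g^L$ on $\Omega^-$, analytic continuation forces $F\equiv g^L$ throughout the connected set $\Omega^+$. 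But then $\mathrm{bv}\,F=g^L$ on $M_2\cap\reg{M}$, contradicting $f|_{M_2}=0$, because $g^L$ vanishes only on $Z$. The same argument rules out extension to $\Omega^-$.

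\textbf{Main obstacle.} The only substantive analytic work is the regularity step, which requires delicate derivative control of $\chi$ near $Z$ (Lemma~\ref{lem-chi}) absorbed by $g^L$ (Lemma~\ref{lem-w}). Both depend essentially on subanalyticity through the {\L}ojasiewicz inequality; this is precisely why, as remarked after Theorem~\ref{thm-non-ext}, the $\mathcal{C}^m$ conclusion for $m\geq 1$ does not extend to more general singular hypersurfaces.
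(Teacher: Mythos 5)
Your proof is correct and follows essentially the same route as the paper: the same construction $g^L\chi$ built from Lemmas~\ref{lem-chi} and~\ref{lem-w}, with the CR property obtained from Theorem~\ref{thm-removable}(i). The only divergence is in the last step, where the paper disposes of non-extendability more directly --- an extension to either side would have boundary value $0$ on the open piece $M_2\cap\reg{M}$, hence vanish identically by the boundary uniqueness theorem, contradicting the boundary value $g^L\not\equiv 0$ on $M_1$ --- whereas your jump-formula gluing reaches the same contradiction by a slightly longer (but valid) path.
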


\begin{proof}
Let $\Omega$ be a ball centred at $p$ such that $\Omega$ is divided by $M$ into two components $\Omega^\pm$, and such
that there exists $\phi\in \mathcal{O}(\Omega)$ with $Z\cap\Omega =\phi^{-1}(0)$. After shrinking $\Omega$, we
may assume that $\omega:= \Omega\cap M$ is divided by $Z$ into more than one component. We can therefore write
$\omega\setminus Z$ as the disjoint union of two non-empty open sets $\omega^+$ and $\omega^-$, which have $Z$ as
their common boundary. By Lemma~\ref{lem-chi}, after shrinking the ball $\Omega$ if required, there is a
$\chi\in\mathcal{C}^\infty(\Omega\setminus Z)$ such that $\chi\equiv 1$ on $\overline{\omega^-}\setminus
Z$, and $\chi\equiv 0$ on $\omega^+\setminus Z$, and for some $r>0$, we have $\abs{D^\mu\chi(z)}\leq C
\dist(z,Z)^{-r\abs{\mu}^3}$. Therefore, by Lemma~\ref{lem-w}, for a fixed $m$, there is an integer
$L\geq 0$, such that $f=\phi^L\chi\in\mathcal{C}^m(\Omega)$.

We claim that $f|_\omega$ is a CR function on $M$ near $p$, and $f$ does not extend holomorphically near $p$
to either $\Omega^+$ or $\Omega^-$. Indeed, on $\omega$ the function $f$ is given by
\[
f(z)= \begin{cases}
            0 &\text{for}\quad z\in \omega^+\cup (Z\cap\omega)\\
            \phi(z)^{L}&\text{for} \quad z\in \omega^-.
        \end{cases}
\]
Since at each point on $\omega\setminus Z$, $f$ is the restriction of a holomorphic function defined in a
neighbourhood of that point, $f$ is CR on $\omega\setminus Z$, continuous, and vanishes on $Z$. Therefore, by
Theorem~\ref{thm-removable}(i) it is CR on~$\omega$. Clearly, by the boundary uniqueness theorem, $f$ cannot extend as
a holomorphic function to either of the open sets $\Omega^\pm$.
\end{proof}

We remark here that it is well-known that if $M$ is a
$\mathcal{C}^\infty$-smooth hypersurface, then near non-minimal
points  there are CR functions of class $\mathcal{C}^\infty$ that do
not extend holomorphically to either side of $M$. Under precisely
what hypotheses there exist non-extendable
$\mathcal{C}^\infty$-smooth CR functions on (singular) subanalytic
hypersurfaces is an open question.

\subsection{Proper two-sided support}\label{sec-twosided}
We first recall a notion that was introduced in \cite{cs}.
\begin{defn}\label{def-2ss}Let $M$ be a subanalytic hypersurface
in $\Omega\subset\cx^n$, and let $p\in M$. We say that $M$ has {\sl
two-sided support} at $p$ if there are germs of complex analytic
hypersurfaces $A^\pm\subset\overline{\Omega^\pm}$ which pass through
$p$. We say that it has {\sl proper two-sided support} at $p$ if
$A^\pm$ may be taken to be different, and such that
\begin{equation}
\label{proper2sscondn} A^+\cap M= A^-\cap M.\end{equation}
\end{defn}

Note that according to this definition, non-minimality is a special
case of (non-proper) two-sided support, namely, when $A^+$ and $A^-$
coincide. However, unlike non-minimality, proper two-sided support
cannot occur at smooth points:

\begin{prop}\label{prop-nograph}
If a point $p\in M$ admits two-sided support by distinct complex hypersurfaces on the two sides,
then $M$ cannot be represented in holomorphic coordinates near $p$ as a graph over a real hyperplane.
Hence, $A^+\cap A^-\subset \sng{M}$.
\end{prop}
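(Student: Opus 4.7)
The plan is to argue by contradiction. Suppose that in some holomorphic coordinates centered at $p$, the hypersurface $M$ has the graph representation
\[
M = \{(z_1,z')\in\cx\times\cx^{n-1}: y_1 = h(x_1, z')\}
\]
for a continuous function $h$ with $h(0)=0$, so that $\Omega^\pm$ corresponds locally to $\{\pm(y_1-h)>0\}$. The crux of the proof is a Radó-type assertion: any germ of a complex analytic hypersurface $A\subset \overline{\Omega^+}$ (resp.\ $A\subset\overline{\Omega^-}$) passing through $p$ must in fact be contained in $M$. This is available from \cite{chirka:rado}. Applying this to both $A^+$ and $A^-$ yields $A^\pm\subset M$ as germs at $p$, so $A^+\cap M = A^+$ and $A^-\cap M = A^-$; the proper two-sided support identity $A^+\cap M=A^-\cap M$ then reduces to $A^+ = A^-$, contradicting the hypothesis that $A^+$ and $A^-$ are distinct.

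For the concluding assertion, let $q\in A^+\cap A^-$. Because $A^\pm\subset\overline{\Omega^\pm}$, local separation gives $q\in\overline{\Omega^+}\cap\overline{\Omega^-}=M\cap\overline{\Omega}$. Suppose toward a contradiction that $q\in\reg{M}$. Then $M$ is a smooth real hypersurface near $q$, and after a biholomorphic change of coordinates near $q$ it is the graph of a smooth function over a real hyperplane. The two germs $A^+$ and $A^-$ remain distinct at $q$: if they coincided as germs at $q$, analytic continuation of irreducible complex analytic sets would force them to coincide along the whole connected neighborhood in which they are defined, contradicting distinctness at $p$. Moreover, the identity $A^+\cap M=A^-\cap M$ holds in a neighborhood of $q$, so $M$ has proper two-sided support at $q$. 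The first half of the proposition, applied at $q$, gives the desired contradiction, so $q\in\sng{M}$.

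The main obstacle is the Radó-type step. The difficulty is that $h$ is only continuous, so the standard maximum principle argument for pluriharmonic functions does not apply in one stroke: the inequality $\mathrm{Im}(\psi(z'))\ge h(\mathrm{Re}(\psi(z')),z')$ on a local holomorphic parameterization $z_1=\psi(z')$ of $A^\pm$ has a pluriharmonic left-hand side but a merely continuous right-hand side. Invoking \cite{chirka:rado} bypasses this analytic work; if one wanted a self-contained argument, one would need an approximation scheme that replaces $h$ by smooth (or at least pluriharmonic-comparable) barriers and then recovers $A^\pm\subset M$ by a limiting procedure, exploiting in addition the subanalyticity of $M$ to control the regularity of $h$.
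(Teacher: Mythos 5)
Your argument has a fatal gap at what you yourself identify as its crux. The ``Rad\'o-type assertion'' --- that any germ of a complex analytic hypersurface $A\subset\overline{\Omega^+}$ through $p$, where $\Omega^+$ is one side of a continuous graph $M$, must be contained in $M$ --- is simply false. Take $n=2$, $M=\{y_1=-|z_2|^2\}$ (a real-analytic, hence subanalytic, graph over the real hyperplane $\{y_1=0\}$), $p=0$, and $A=\{z_1=0\}$. Then on $A$ one has $y_1-h=|z_2|^2\geq 0$, so $A\subset\overline{\Omega^+}$ and $0\in A$, yet $A\cap M=\{0\}$, so $A\not\subset M$. The maximum-principle argument that works for the flat model $\{y_1=0\}$ (where $\Im\psi\geq 0$ and $\Im\psi(0)=0$ force $\psi\equiv 0$) does not survive the passage to a general graph, because the lower barrier $h(\Re\psi(z'),z')$ need not be pluriharmonic or even plurisubharmonic --- and the failure is not a matter of regularity of $h$, since the counterexample is real-analytic. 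The result in \cite{chirka:rado} is a different statement (that a complex hypersurface \emph{contained in} a continuous graph locally divides it); it does not give one-sided containment. Since both applications of this lemma (to $A^+$ and to $A^-$) fail, the contradiction $A^+=A^-$ is never reached.

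The paper's actual proof avoids any one-sided containment statement and instead exploits the interaction between the two supports: after cutting with a complex $2$-plane $L$ transverse to the real hyperplane $H$ over which $M$ is a graph, one translates $A^+\cap L$ by $tv$ ($t>0$) in the vertical direction $v$ of the graph, obtaining curves $B_t$ that lie strictly in $\Omega^+$ and hence are disjoint from $C=A^-\cap L\subset\overline{\Omega^-}$; on the other hand $B_0\cap C\ni 0$, and persistence of intersections of complex curves (proved via the argument principle applied to $f_t\circ\phi$ along a Puiseux parametrization $\phi$ of $C$, using $A^+\neq A^-$ so that $f_0\circ\phi\not\equiv 0$) forces $B_t\cap C\neq\emptyset$ for small $t>0$ --- a contradiction. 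Note that this is where the graph hypothesis is genuinely used: it supplies a direction $v$ in which translation moves $\overline{\Omega^+}$ strictly into $\Omega^+$. Your second paragraph (deducing $A^+\cap A^-\subset\sng{M}$ from the first assertion applied at a putative regular point $q$) is in the spirit of the paper and essentially fine, but it rests entirely on the first part, which your proposal does not establish.
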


\begin{proof}
Suppose that $M$ is represented near $p=0$ as a graph over a real
hyperplane $H$, and let $L$ be any complex two-dimensional linear
subspace of $\cx^n$ transverse to $H$. Then, $M\cap L$ is
represented as a graph over $H\cap L$ and has proper two-sided
support by $A^\pm\cap L$. Assume without loss of generality that
$A^+$ is situated above the graph $M$ and $A^-$ below it. Let $v$ be
a vector in $L$ orthogonal to $H\cap L$. We set $B_t= \{z+tv\colon
z\in A^\pm\cap L\}$. Then $B_t$ is a complex curve in $L$, and for
$t>0$, we have
\begin{equation}\label{con1}
 B_t\cap C=\emptyset,
 \end{equation}
where $C=A^-\cap L$. On the other hand, $B_0\cap C$ contains the
point 0 and as $t\rightarrow 0+$, $B_t\rightarrow B_0$. We claim
that this situation is not possible. This can be deduced from
general properties of intersection of analytic varieties (see e.g.
\cite{chirka:cas}), but we give a simple proof. Let $U$ be a
neighbourhood of 0 in $\cx^2$, and let $f_t$ be a family of
holomorphic functions on $U$, depending continuously on $t$ such
that $B_t\cap U=f_t^{-1}(0)$. Let $\Delta$ be the unit disc in
$\cx$, and let $\phi:\overline{\Delta}\rightarrow C\cap U$ be a
Puiseux parametrization of $C$ near 0 such that $\phi(0)=0$. Let
$g_t=f_t\circ\phi$. Then $g_0(0)=0$, and $g_0$ is holomorphic on the
closed unit disc. It follows that there exists $\e>0$ such that for
$|t|<\e$,  we have
\[ \frac{1}{2\pi i} \int_{\partial\Delta}\frac{{g_t}'(\zeta)}{{g_t}(\zeta)}d\zeta=
\frac{1}{2\pi
i}\int_{\partial\Delta}\frac{g_0'(\zeta)}{g_0(\zeta)}d\zeta,\] since
the integral on the left assumes only integer values (the number of
zeros of $g_t$ in $\Delta$) and depends continuously on $t$.
Therefore, ${g_t}(z)=0$ for some $z\in\Delta$, and thus both
${B}_t$ and $C$ pass through $\phi(z)$, which contradicts~\eqref{con1}.
\end{proof}

We note that two-sided support occurs frequently in nature. In fact,
if $A^\pm$ are distinct complex hypersurfaces in an open set
$\Omega$ in $\cx^n$ such that $E=A^+\cap A^-$ is non-empty with
$p\in E$, and each of $A^\pm\setminus E$ is connected (this happens
when $A^\pm$ are irreducible), then
\[ M= \{z\in\Omega\colon \dist(z,A^+)=\dist(z,A^-)\}\]
has proper two-sided support at $p$. It is easy to verify that
$\Omega^\pm=\{z\in\Omega\colon\pm(\dist(z,A^+)-\dist(z,A^-))>0\}$
are connected, and it follows from \cite[Remarks~3.11]{bm} that $M$
is subanalytic (and real-analytic if $A^\pm$ are smooth.) If $M$ is
not minimal at $p$, after a small perturbation, we get a
hypersurface $\tilde{M}$ which has two-sided support at $p$ by
$A^\pm$, and which is minimal at $p$. In \cite{cs}, the quadratic
cones (zero-sets of real quadratic forms in $\cx^n$) with two-sided
support were classified.

We now prove the other half of Theorem~\ref{thm-non-ext}.

\begin{prop}\label{prop-2ss}
Let $M$ be a subanalytic hypersurface in $\cx^n$ and $p\in M$.
Suppose that  $M$ has proper two-sided support at $p$. Then,
for every integer $m\geq 0$ there is a CR function on $M$ near
$p$ of class $\mathcal{C}^m$ that does not extend as a
holomorphic function to either side.
\end{prop}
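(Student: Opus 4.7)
The plan mirrors Proposition~\ref{prop-non-min}, with the single complex hypersurface $Z\subset M$ there replaced by the pair $A^\pm$, and the cutoff $\chi$ replaced by an explicit meromorphic ratio of defining functions. Since the case $A^+\subset M$ or $A^-\subset M$ makes $M$ non-minimal at $p$ (after passing to an irreducible component), so that Proposition~\ref{prop-non-min} applies, we may assume $A^\pm\not\subset M$, and hence $A^\pm\cap\Omega^\pm\neq\emptyset$. Choose holomorphic defining functions $\phi^\pm\in\mathcal{O}(\Omega)$ with $A^\pm=(\phi^\pm)^{-1}(0)$. The key geometric observation is that $A^\pm\subset\overline{\Omega^\pm}$ together with $A^\pm\cap M=Z:=A^+\cap A^-$ forces $A^\pm\cap\overline{\Omega^\mp}\subseteq Z$; applying \L ojasiewicz's inequality~\eqref{eq:loj-ineq} to $\phi^\pm$ on a compact subset of $\overline{\Omega^\mp}$ then yields constants $c,s>0$ with
\[
|\phi^\pm(z)|\geq c\,\dist(z,Z)^s\quad\text{for $z\in\overline{\Omega^\mp}$ near $p$.}
\]

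Combining this lower bound with the derivative estimate~\eqref{eq:difff} shows that $(\phi^\mp)^{-L}$ is $\mathcal{C}^m$ on $\overline{\Omega^\pm}\setminus Z$ with derivatives of order $\leq m$ dominated by a fixed power of $\dist(\cdot,Z)^{-1}$. Lemma~\ref{lem-w}, applied with $f=(\phi^\mp)^{-L}$ and $g=\phi^\pm$ (which vanishes on $Z$ with $|\phi^\pm|\leq C\,\dist(\cdot,Z)$), then produces an integer $K=K(L,m)$ such that
\[
F^\pm(z):=\frac{(\phi^\pm(z))^K}{(\phi^\mp(z))^L},
\]
extended by zero on $Z$, lies in $\mathcal{C}^m_b$ on a neighborhood of $p$ in $\overline{\Omega^\pm}$. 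Each $F^\pm$ is holomorphic on $\Omega^\pm$. Setting $f:=F^+|_M+F^-|_M$ yields a $\mathcal{C}^m$ function on $M$ near $p$; Proposition~\ref{prop-bv}, applied separately to each $F^\pm$ as a continuous function on $\Omega^\pm\cup(M\cap\Omega)$ holomorphic on $\Omega^\pm$, shows that $F^\pm|_M$, and hence $f$, is CR on $M$.

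To establish that $f$ admits no holomorphic extension to $\Omega^+$, I would shrink $\Omega$ to a ball so that $H^{0,1}(\Omega)=0$ and invoke the jump formula of $\S$\ref{sec-jump}. The distribution $u$ on $\Omega$ defined by $u|_{\Omega^+}=F^+$ and $u|_{\Omega^-}=-F^-$ is holomorphic on $\Omega\setminus M$ and satisfies $\dbar u=f[M]^{0,1}$. If $f$ extended holomorphically to $\Omega^+$ as some $G^+$, then $\widetilde{u}$ given by $\widetilde{u}|_{\Omega^+}=G^+$ and $\widetilde{u}|_{\Omega^-}=0$ would be another such solution, so by the uniqueness of the jump decomposition $u-\widetilde{u}$ is represented by a function $h\in\mathcal{O}(\Omega)$; restricting to $\Omega^-$ gives $h=-F^-$ there. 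Hence $-F^-$ admits a holomorphic continuation to all of $\Omega$. But by uniqueness of analytic continuation, this continuation must coincide with the meromorphic function $-(\phi^-)^K/(\phi^+)^L$, whose pole set $A^+\cap\Omega$ meets $\Omega^+$, contradicting holomorphicity of $h$. A symmetric argument precludes extension to $\Omega^-$.

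The principal technical burden is the $\mathcal{C}^m$ regularity of $F^\pm$ up to $M$: this requires carefully combining the \L ojasiewicz-type lower bound along $Z\subset\sng{M}$ with Lemma~\ref{lem-w} (whose formulation for an open domain must be read on the closed half-space $\overline{\Omega^\pm}$, which causes no difficulty since only local estimates near $Z$ are used). A secondary subtlety, tacit in the jump-formula argument, is the passage from boundary-value equality across $\reg{M}$ to a single holomorphic function on $\Omega$, which combines classical boundary uniqueness on $\reg{M}$ with Hartogs extension across the subanalytic set $\sng{M}$, of real codimension at least two in $\cx^n$.
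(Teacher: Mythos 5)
Your overall architecture matches the paper's: the non-extendable function is built from the meromorphic expressions $\phi_+^K/\phi_-^L$ and $\phi_-^{K'}/\phi_+^{L'}$, the lower bound $\abs{\phi_\mp}\geq c\,\dist(\cdot,Z)^s$ away from $\Omega^\mp$ comes from {\L}ojasiewicz's inequality, and non-extendability follows because any holomorphic extension would have to coincide with a meromorphic function whose polar set $A^\pm$ enters $\Omega^\pm$ arbitrarily close to $p$. Your CR-ness argument via Proposition~\ref{prop-bv} and your jump-formula argument for non-extension are acceptable (if heavier) variants of the paper's, which instead invokes Theorem~\ref{thm-removable}(i) and the boundary uniqueness theorem directly.

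The genuine gap is in the $\mathcal{C}^m$ regularity for $m\geq 1$ --- precisely the point you dismiss as causing ``no difficulty.'' In this paper a $\mathcal{C}^m$ function on $M$ means, by definition, the restriction to $M$ of a $\mathcal{C}^m$ function on $\cx^n$. Your $F^\pm$ is defined only on the closed set $\overline{\Omega^\pm}$ (it blows up along $A^\mp\cap\Omega^\mp$), so even granting that all partial derivatives of order $\leq m$ extend continuously by $0$ to $Z$, you are left with a Whitney extension problem from $\overline{\Omega^\pm}$ (equivalently, from $M$). Lemma~\ref{lem-w} is stated and proved for an \emph{open} set $\Omega$, and its proof (checking differentiability at points of $E$ from the definition of the partial derivative) does not transfer to a closed set: bounded derivatives up to the boundary of a subanalytic domain yield only a H\"older, not a Lipschitz, modulus with respect to Euclidean distance (such sets are in general only $p$-regular, the inner metric being controlled by a power $<1$ of the Euclidean one), so the Whitney compatibility conditions can fail and derivatives are lost. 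This is exactly what the paper's Lemma~\ref{lem-chi} is designed to circumvent: the cutoff $\chi$, equal to $1$ near $M\setminus Z$ and $0$ near $A^-\setminus Z$, with derivatives blowing up at the controlled rate $\dist(\cdot,Z)^{-r\abs{\mu}^3}$, turns $\phi_+^L\chi/\phi_-$ into a function on the open set $\Omega\setminus Z$ to which Lemma~\ref{lem-w} genuinely applies, producing an ambient $\mathcal{C}^m$ function on $\Omega$ whose restriction to $M$ is your $F^+|_M$. Without $\chi$ (or, alternatively, a quantitative Whitney extension theorem for subanalytic sets together with a compensating increase of the exponent $K$), the claim that your $f$ is $\mathcal{C}^m$-smooth on $M$ is unproved for $m\geq 1$; for $m=0$ your argument is fine.
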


\begin{proof} Let $A^\pm$ be the two supports of $M$ on the opposite
sides at $p$. Let $\Omega$ be a neighbourhood of $p$ in $\cx^n$
such that $\Omega\setminus M$ has two components $\Omega^\pm$,
and $A^\pm\subset\overline{\Omega^\pm}$. After shrinking
$\Omega$, we may assume that there are holomorphic functions
$\phi_\pm$ on $\Omega$ such that $A^\pm=\phi_\pm^{-1}(0)$. We
set $Z=A^+\cap A^-$. It follows that $A^+\cap M=A^-\cap M=Z$.

We now  construct a $\mathcal{C}^m$-smooth CR function on $M$ near
$p$ which does not admit a local holomorphic extension to either of
$\Omega^\pm$.

By Lemma~\ref{lem-chi}, there exist a function $\chi\in\mathcal{C}^\infty(\Omega\setminus Z)$ such that $\chi\equiv
0$ in a neighbourhood $U^-$ of $A^-\setminus Z$, $\chi\equiv 1$ in a neighbourhood $V$ of $M\setminus Z$, and $r>0$
such that $\abs{D^\mu\chi(z)}\leq C \dist(z,Z)^{-r\abs{\mu}^3}$ for any multi-index~$\mu$. Further,
\begin{align}
\abs{\phi_-(z)} &\geq C\;\dist(z,A^-)^{s}&\text{{\L}ojasiewicz's inequality \eqref{eq:loj-ineq}}\nonumber\\
&\geq C\;\dist(z,Z)^{rs}&\text{by conclusion (2) of Lemma~\ref{lem-chi}.}
\end{align}
Define 
\[ g= \begin{cases}
\displaystyle{\frac{\chi}{\phi_-}} & \text{on\ } \Omega\setminus A^-,\\
0 &\text{on \ } A^-\setminus Z.
\end{cases}
\]
Then $g$ is smooth on $\Omega\setminus Z$ and vanishes on $U^-$. Therefore, using the Leibniz rule and
\eqref{eq:difff}, we conclude that
for any multi-index $\mu$,
\begin{align*}
\abs{D^\mu g(z)} &\leq
C \sum_{k\leq \mu}\abs{D^k \phi_-(z)^{-1}}\abs{D^{\mu-k}\chi(z)}\\
&\leq
C \sum_{k\leq \mu}\abs{\phi_-(z)^{-|k|-1}}\abs{D^{\mu-k}\chi(z)}\\
&\leq
C\dist(z,Z)^{-2|\mu|rs}\;\dist(z,Z)^{-r\abs{\mu}^3}\\
&\leq C \dist(z,Z)^{-p(\mu)},
\end{align*}
where the constants are independent of $z$ (but may depend on
$\phi_-$.) For a fixed $m$, by Lemma~\ref{lem-w}, there exists
$L\geq 0$ such that the function
\[ h^-= \begin{cases}
\phi_+^L\;g & \text{on\ } \Omega\setminus Z,\\
0 &\text{on \ } Z.
\end{cases}
\]
is $\mathcal{C}^m$-smooth on $\Omega$. By interchanging the role of
$\phi_+$ and $\phi_-$ we may construct the same way a
$\mathcal{C}^m$-smooth function $h^+$. Since the restriction of the
function $h^+-h^-$ to $M^{\rm reg}$ is CR, and the function vanishes
on $\sng{M}$, it follows from Theorem~\ref{thm-removable}(i), that
it is CR on $M$. By construction, $h^+-h^-$ cannot have holomorphic
extension to either side of $M$. Indeed, by the boundary uniqueness
theorem, such an extension must coincide with the meromorphic
function
\[ \tilde{f}
=\displaystyle{\frac{\phi_+^{L}}{\phi^-}-\frac{\phi_-^{\tilde L}}{\phi^+}}\]
on $\Omega\setminus(A^+\cup A^-)$ and therefore cannot be defined on
a one-sided neighbourhood of $p$ on either side.
\end{proof}

\subsection{Proofs of Lemma~\ref{lem-w} and Lemma~\ref{lem-chi}}\label{lemchiproofsection}

\begin{proof}[Proof of Lemma~\ref{lem-w}] Suppose first that
$f\in \mathcal{C}^m_b(\Omega\setminus E)$. Then $fg$ becomes
continuous on $\Omega$ if it is extended by 0 on $E$, which proves
the result for $m=0$. For $m=1$, we can use the definition of
partial derivatives to check that $h=fg^2$ (again extended by 0 on
$E$) is in $\mathcal{C}^1(\Omega)$. For $m\geq 2$, we may take
$h=fg^{m+1}$; the proof is an easy induction using the Leibniz
formula.

By the last paragraph, to prove the general case, it suffices
to show that for a given $f\in\mathcal{C}^m(\Omega\setminus E)$, there
is an $L$ such that $fg^L\in\mathcal{C}^m_b(\Omega\setminus E)$. Let
$q(m)= \max_{\abs{\mu}\leq m}p(\mu)$.  For every $m$, then there
exists $C$ such that $\abs{D^\mu f}\leq C\dist(\cdot,E)^{-q(m)}$,
and $q(m)$ is increasing in $m$. For a fixed $m$, we let $L$ be an
integer such that $L\geq q(m)+m$. If $\mu$ is any multi-index such
that $\abs{\mu}\leq m$, then
\begin{align*}
\abs{D^\mu(g^Lf)}&\leq C\;\sum_{k\leq \mu}\abs{D^{\mu-k}g^L}\abs{D^k f}\\
&\leq C\; \sum_{k\leq \mu} \abs{g}^{L-\abs{k}}\dist(\cdot,
E)^{-q(\abs{k})}\\
&\leq C\;\dist(\cdot,E)^{L-\abs{\mu}}\dist(\cdot,E)^{-q(\abs{\mu})}\\
&\leq C \; \dist(\cdot,E)^{L-m-q(m)}.
\end{align*}
By the choice of $L$, we have
$fg^L\in\mathcal{C}^m_b(\Omega\setminus E)$.
\end{proof}

For the proof of Lemma~\ref{lem-chi}, we will need to use the
following fact regarding the existence of a regularized distance
function in $\rl^n$:

\begin{thmb}
For any closed subset $E\subset \rl^n$, there is a $\mathcal{C}^\infty$ function $\delta$ on $\rl^n\setminus E$
such that,
\begin{enumerate}
\item $C_1\;\dist(x,E)\leq \delta(x)\leq C_2\;\dist(x,E)$, for $x\in
\rl^n\setminus E$, and
\item for every multi-index $\mu$, we have
\begin{equation}\label{dmudelta} \abs{D^\mu\delta(x)}\leq
\frac{B_\mu}{\delta(x)^{\abs{\mu}-1}},\end{equation}
\end{enumerate}
where the constants $C_1,C_2$ and $B_\mu$ are independent of $E$.
\end{thmb}

\begin{proof}[Proof of Lemma~\ref{lem-chi}]
In this proof we denote by $C$ any constant which is independent of
the point $x\in\rl^n\setminus Z$.

Let $\lambda$ be a $\mathcal{C}^\infty$-smooth function on $\rl$
with values in the interval $[0,1]$ such that
\[ \lambda(t)=\begin{cases} 1 & \text{if $t\leq \frac{1}{2}$}\\
0 &\text{if $t\geq 1$.}\end{cases}
\]

For $j=1,2$, let $\delta_j$  be a regularization of $\dist(z,E_j)$
as given by the result quoted above. We define $\chi\in
\mathcal{C}^\infty(\rl^n\setminus Z)$ by
\[ \chi(x)= \begin{cases}
\displaystyle{\lambda\left(\frac{\delta_1(x)}{\delta_2(x)}\right)}
&\text{ if $x
\not\in E_2$,}\\
0 &\text{ if $x \in E_2\setminus Z$}.\end{cases}\] Then conclusion (1) of the lemma holds for $\chi$, if we take
\[ U_2=\left\{\frac{\delta_1(x)}{\delta_2(x)}\geq 1\right\},\text{ and }
U_1=\left\{\frac{\delta_1(x)}{\delta_2(x)}\leq\frac{1}{2}\right\}.\]
Therefore, we need to consider only $x\in U$, where
\[ U=\left\{x\in\rl^n\setminus Z\colon \frac{1}{2}
<\frac{\delta_1(x)}{\delta_2(x)}<1 \right\}.\] Since $E_1$ and $E_2$ are regularly situated (see \eqref{eq:regsit}),
it follows that there exist a bounded neighbourhood $\Omega$ of $z$ in $\cx^n$ and $r>0$ such that for $x\in\Omega$,
\[
\dist(x,E_1)+\dist(x,E_2)\geq C\;\dist(x,Z)^r.
\]
After shrinking $\Omega$, we may assume that for $x\in \Omega$, we
have $\dist(x,Z)<1, \delta_1(x)<1, \delta_2(x)<1$. Thanks to the
comparability of $\delta_j$ and $\dist(x,E_j)$ we have
\[ \delta_1(x)+\delta_2(x)\geq C\; \dist(x,Z)^r.\]
If $x\not\in U_1$, then $\delta_1(x)>\frac{1}{2}\delta_2(x)$, and therefore, $\delta_1(x)\geq
C\,\dist(x,Z)^r$. By the comparability of $\delta_1$ with $\dist(\cdot, E_1)$, conclusion (2) follows. The estimate for
$x\not\in U_2$ follows exactly the same way. Consequently, if $x\in U\cap\Omega$, then for $j=1,2$,
\begin{equation}\label{eq-deltaj}\delta_j(x)\geq C
\dist(x,Z)^r.\end{equation}

For the last conclusion, note that it holds for $\mu=0$ if $C>1$. Now, for $x\in U_1\cup U_2$, the function $\chi$ is
locally constant. Therefore, we only need to estimate $D^\mu(\chi(x))$ for $x\in (U\cap\Omega)\setminus Z$.

First, for any multi-index $k$,
\begin{align}
\abs{D^k\left(\delta_2(x)^{-1}\right)}&\leq
\frac{C}{\delta_2(x)^{\abs{k}+1}}\cdot
\left(\abs{\delta_2(x)}_{\mathcal{C}^{\abs{k}}}\right)^{\abs{k}} &
\text{from \eqref{eq:difff}}\nonumber\\
&\leq \frac{C}{\delta_2(x)^{\abs{k}+1}}
\cdot\left(\frac{1}{\delta_2(x)^{\abs{k}-1}}\right)^{\abs{k}} &
\text{
from \eqref{dmudelta}}\nonumber\\
&=\frac{C}{\left(\delta_2(x)\right)^{\abs{k}^2+1}}\nonumber\\
&\leq \frac{C}{\dist(x,Z)^{r(\abs{k}^2+1)}} &\text{ from
\eqref{eq-deltaj}.}\label{eq-dinvest}
\end{align}
By the Leibniz rule, for any multi-index $\mu$,
\begin{align}\label{eq-musq}
\abs{D^\mu\left(\delta_1(x)\delta_2(x)^{-1}\right)} &\leq
C\sum_{k\leq\mu}\abs{D^{\mu-k}\delta_1(x)}\abs{D^k(\delta_2(x)^{-1})}\nonumber\\
&\leq
\sum_{k\leq\mu}\frac{1}{(\dist(x,Z))^{r(\abs{k}^2-\abs{k}+\abs{\mu})}}&\text{using
\eqref{dmudelta},\eqref{eq-deltaj},\eqref{eq-dinvest}}\nonumber\\
&\leq \frac{C}{\dist(x,Z)^{r\abs{\mu}^2}}
\end{align}
Finally, by \eqref{eq:difff},
\begin{align*}
\abs{D^\mu \chi}&\leq
C\abs{\lambda}_{\mathcal{C}^m}\abs{\delta_1\delta_2^{-1}}_{\mathcal{C}^m}^m\\
&\leq C
\left(\frac{1}{\dist(\cdot,Z)^{r\abs{\mu}^2}}\right)^{\abs{\mu}}
&\text{by\eqref{eq-musq}}\\
& =C\;\dist(\cdot,Z)^{-r\abs{\mu}^3},
\end{align*}
which completes the proof of \eqref{eq-dmuchi}.
\end{proof}

\subsection{Global non-extendable CR functions}
While in this paper we confine ourselves mainly to the question of
local extension, we make a few observations regarding global analogs
of the constructions of non-extendable CR functions, i.e., we
consider the question whether there can be a CR function on the
boundary $\partial\Omega$ of a domain $\Omega$ which does not have
holomorphic extension into a global one-sided neighbourhood of $M$
in $\Omega$ or its complement. If the ambient manifold is $\cx^n$
with $n\geq 2$, or a Stein manifold of dimension at least 2, by the
Bochner-Hartogs Theorem, such non-extendable CR functions do not
exist, as long as $\partial\Omega$ is smooth and connected; in fact,
every CR function on $\partial\Omega$ extends to all of $\Omega$. The 
analogous result continues to hold if $\Omega$ is a subanalytic domain in $\cx^n$. 
More precisely, the following holds.

\begin{prop}[Bochner-Hartogs Theorem]\label{prop-hartogs}
Let $\Omega\Subset\cx^n$, $n\geq 2$, be a bounded domain, such that $M=\partial \Omega$ 
is a connected subanalytic hypersurface. Let $f$ be a CR function on $M$, 
which is continuous on ${\reg{M}}$. Then there exists a function $F$  holomorphic
on $\Omega$ which is a holomorphic extension of $f$, i.e., $\mathrm{bv}F=f$
on $\reg{M}$. If for $k\geq 0$, the function $f$ is $\mathcal{C}^k$-smooth on
$\reg{M}$, then $F$ extends as a $\mathcal{C}^k$ function to $\reg{M}$.
\end{prop}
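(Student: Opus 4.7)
The plan is to reduce this to the classical Bochner-Hartogs argument, using the jump formula of $\S$\ref{sec-jump} together with a Hartogs extension. The idea goes back to the observation that an $L^1_{\rm loc}$ CR function on $M$, viewed as a current, bounds on each side into holomorphic functions, one of which must vanish for topological reasons.

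First I would enclose $\overline{\Omega}$ in a large open ball $B$ and verify that $B\setminus M$ has exactly the two components $\Omega$ and $\Omega^-:=B\setminus\overline{\Omega}$. The first is given; connectedness of the second is the key topological input. Any component $V$ of $\Omega^-$ has $\partial V\subset M$, and by the local separation property of Definition~\ref{def:sub}, $\partial V\cap\reg{M}$ is both open and closed in $\reg{M}$ (at each smooth point of $M$ only one of the two local sides can lie in $V$). Since $\sng{M}$ has codimension at least one in $M$ (by property $1^\circ$ of $\S$\ref{sec-subanalytic-def} and the stratification theorem), $\reg{M}$ is connected, so there is a unique such component.

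Next I would apply the jump formula from $\S$\ref{sec-jump} inside the pseudoconvex ball $B$, which satisfies $H^{0,1}(B)=0$. Solving $\dbar u=f[M]^{0,1}$ on $B$ (concretely, via the Bochner-Martinelli transform of $f$, which is holomorphic on $\cx^n\setminus M$ and tends to $0$ at infinity) produces holomorphic restrictions $f^+:=u|_\Omega$ and $f^-:=u|_{\Omega^-}$ with $f=\mathrm{bv}\,f^+-\mathrm{bv}\,f^-$ on $\reg{M}$. Since $f^-$ is holomorphic on all of $\cx^n\setminus\overline{\Omega}$ and vanishes at infinity, and $n\geq 2$, Hartogs's theorem extends $f^-$ from a neighborhood of infinity to a holomorphic function $\tilde f$ on $\cx^n$; connectedness of $\cx^n\setminus\overline{\Omega}$ forces $\tilde f=f^-$ on all of $\Omega^-$, and $\tilde f\to 0$ at infinity, so $\tilde f\equiv 0$ by Liouville. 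Therefore $\mathrm{bv}\,f^+=f$ on $\reg{M}$, and $F:=f^+$ is the desired holomorphic extension.

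For the $\mathcal{C}^k$-smoothness up to $\reg{M}$, I would invoke the standard boundary regularity of the Bochner-Martinelli integral at smooth portions of the boundary: if $f$ is $\mathcal{C}^k$ on a relatively open subset of $\reg{M}$, then the jump formula shows that $F$ extends $\mathcal{C}^k$ up to that portion of the boundary (alternatively, this follows locally from the smooth Bochner-Hartogs theorem combined with uniqueness of one-sided holomorphic extensions). The main obstacle is the topological step of ruling out bounded pockets of $\cx^n\setminus\overline{\Omega}$: while intuitively clear, this requires combining the local separation property of subanalytic hypersurfaces with the stratification theorem, since Jordan-Brouwer is not directly available in the possibly non-manifold subanalytic setting.
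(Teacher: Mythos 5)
Your proposal follows the paper's own route: the jump decomposition of $\S$\ref{sec-jump} applied to $f$, followed by Hartogs extension of the piece of the decomposition living on the unbounded component of $\cx^n\setminus M$. The only genuine divergence is in how that exterior piece is disposed of. You take the specific Bochner--Martinelli solution of $\dbar u = f[M]^{0,1}$, so that the exterior piece decays at infinity and, once extended to an entire function, must vanish identically by Liouville; this gives $F$ directly as the interior piece. The paper instead extends the exterior piece to an entire function $\tilde{f^+}$ and sets $F=\tilde{f^+}-f^-$, which works for an arbitrary solution of the $\dbar$-equation and needs no decay or Liouville step; both variants are standard and correct. One caution on your topological lemma: the inference ``$\sng{M}$ has codimension at least one in $M$, hence $\reg{M}$ is connected'' is not valid in general --- removing a codimension-one subanalytic subset can disconnect the regular locus (e.g.\ along a cuspidal edge of $\partial\Omega$), so your open--closed argument as stated only shows that $\partial V\cap\reg{M}$ is a union of components of $\reg{M}$. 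The fact actually needed, and the one the paper simply asserts from connectedness of $M=\partial\Omega$, is the connectedness of $\cx^n\setminus\overline{\Omega}$; your argument can be repaired by working with the components of $\reg{M}$ adjacent to a given exterior component, but as written this step has a gap that the rest of your (otherwise correct) proof inherits.
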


\begin{proof} By the Jump formula of $\S$\ref{sec-jump}, ${\mathrm{bv}}f^+-{\mathrm{bv}}f^-=f$, where
 $f^+$ is holomorphic on $\cx^n\setminus\overline{\Omega}$
 and $f^-$ on $\Omega$ such that  on $\reg{M}$, equation \eqref{eq-jump} holds 
in the sense of distributions.  Since $M=\partial \Omega$ is connected, so is
$\cx^n\setminus\overline{\Omega}$. Therefore, by Hartogs' theorem, $f^+$
extends to an entire function $\tilde{f^+}$ on $\cx^n$. We take
$F=\tilde{f^+}-f^-$. Then $F$ has distributional boundary values $f$
on $\reg{M}$, and the statement in the last sentence follows from \cite[Theorem~7.2.6 and Theorem~7.5.1]{ber}.
\end{proof}

Non-extendable CR functions cannot be
constructed on boundaries of bounded domains in Stein manifolds because Stein
manifolds do not contain compact complex hypersurfaces, and therefore
global analogs of non-minimality and two-sided support cannot occur.

We now consider an example (cf. Section~12 of \cite{hi}, pointed out to us by M.C. Shaw), where global
non-minimality and global two-sided support lead to the existence of non-extendable global CR functions.
Let $M \subset\cx\mathbb{P}^2$ be the compact connected real analytic hypersurface
\[ M =\{[z_0,z_1,z_2]\colon \abs{z_1}=\abs{z_2}\}.\]
$M$ is smooth except at the point $[1,0,0]$, and
$\cx\mathbb{P}^2\setminus M$ is the disjoint union of $\Omega^+$ and
$\Omega^-$, where
\[ \Omega^\pm=\{[z_0,z_1,z_2]\colon \pm(\abs{z_2}-\abs{z_1})>0\},\]
are ``Projective Hartogs Triangles." The domains $\Omega^\pm$ are
biholomorphic to each other and pseudoconvex.

$M$ is both (globally) non-minimal and has proper (global) two-sided support at the singular point $[1,0,0]$.
This allows us to construct non-extendable CR functions on $M$ in two different ways, each showing that the
Bochner-Hartogs theorem does not hold for $\Omega^\pm$.

Non-minimality: note that  $M$ is Levi-flat (in the sense that the smooth part
is Levi-flat). It is ``foliated" by projective lines $\{z_1= e^{i\theta} z_2\}$, $\theta\in\rl$ (although all
these ``leaves" pass through the singular point $[1:0:0]$).  Let $Z$ be the union of two of these leaves. For
definiteness assume that
$$
Z=\{z_1=z_2\}\cup \{z_1=-z_2\}=\{z_1^2-z_2^2=0\}.
$$ Then $M\setminus
Z$ consists of two components
\begin{align*}
M^+&= \{[z_0,z_1,z_2]\colon z_1 = e^{i\theta}z_2 \text{ for }
0<\theta<\pi\}\\
M^-&=\{[z_0,z_1,z_2]\colon z_1 = e^{i\theta}z_2 \text{ for }
\pi<\theta<2\pi\}. \end{align*}

Let $f$ be the function on $M$ defined by $f\equiv 1$ on $M^+$ and
$f\equiv 0$ on $M^-$. We claim that $f$ is a bounded CR function on
$M$ but $f$ does not extend to either $\Omega^+$ or $\Omega^-$. Indeed, on $\reg{M}$, the function $f$
is CR except along the complex hypersurface $Z$ which is tangent to
the Cauchy-Riemann vector fields $T^{0,1}(\reg{M})$, and therefore
is removable for $L^1_{\rm loc}$ CR functions on $\reg{M}$ (see
\cite[Proposition~1]{kyt-rea}.) It follows that $f$ is CR on $\reg{M}$. Therefore, by Theorem~\ref{thm-removable}(ii),
$f$ is CR on $M$. Clearly, $f$ does not extend holomorphically to $\Omega^+$ or $\Omega^-$.

Two-sided support: $M$ has global proper two-sided support. In fact
$\{z_2=0\}\subset \overline{\Omega^+}$ and
$\{z_1=0\}\subset\overline{\Omega^-}$. The function
\[ g(z) = \frac{z_1}{z_2}-\frac{z_2}{z_1},\]
is bounded on $M$ (each of the two terms has absolute value 1) and
is CR on $\reg{M}$. It follows from Theorem~\ref{thm-removable}(ii)
that $g$ is an ${L}^\infty$ CR function on $M$. However, as in the
proof of Proposition~\ref{prop-2ss}, $g$ cannot extend to either of
$\Omega^\pm$, since such an extension must blow up along $\{z_1=0\}$
and $\{z_2=0\}$. It follows that $g$ is a non-extendable CR
function.

Note, however, that there are no non-constant {\em continuous} CR functions on $M$. For any $\theta\in\rl$,
the restriction of any such function to the compact leaf $\{z_1=e^{i\theta}z_2\}\subset M$  is
holomorphic, and therefore constant. Since these leaves all pass
through the point $[1,0,0]$, the value of these constants are the
same for all leaves. This shows that the statement in
Proposition~\ref{prop-non-min} that we can construct CR functions of
arbitrary smoothness is purely local. On $M$ there are
non-extendable bounded CR functions defined globally, but there are no
continuous CR functions that do not extend.

\section{The hypersurface $M$}\label{sec-m}
\subsection{Definition  of $M$ and precise statement of the extension result.}\label{sec-mdef}
First we give a more precise  form of Theorem~\ref{mainthm}, part (i). For a point $z\in \cx^n$, 
$n\geq 2$, we will write the coordinates as  $z=(z_1,z_2,\tilde{z})$, where $z_1,z_2\in\cx$ and
$\tilde{z}\in \cx^{n-2}$ (where, as usual, if $n=2$, $\cx^0$ is taken to be the one-point space $\{0\}$).
Let $\ell\geq 2$ be an integer, $\ell=\infty$ or $\ell=\omega$. Let $\Omega$ be a neighbourhood 
of the origin in $\cx^n$, and let $\gamma$ be a $\mathcal{C}^\ell$-smooth subanalytic (i.e., its graph
is a subanalytic set) function on some neighbourhood of $\overline{\Omega}$ in $\cx^2$. Assume that 
$\gamma(0)\not=0$. We let
\begin{equation}
\label{eq-rhodef} \rho(z) = \Re(z_1z_2)+\abs{z_1}^2\gamma(z),
\end{equation}
and define
\begin{equation}\label{mgamma}
 M = \{z\in \Omega\colon \rho(z)=0\}.
\end{equation} 
Then $M$ is a subanalytic hypersurface in the sense of Definition~\ref{def:sub}. After making the 
linear change of variables $(z_1,z_2,\tilde{z})\mapsto (z_1,-z_2,\tilde{z})$, if necessary, we will further 
assume that $\gamma(0)>0$.

Let $\gamma_{\ell}(z)=1+x_2^{\ell}\abs{x_2}$. Then $\gamma_{\ell}(z)$ is a subanalytic function which is 
$\mathcal{C}^\ell$-smooth but not $\mathcal{C}^{\ell+1}$-smooth, and the same is true of $\rho$. Since $\nabla\rho(z)\not=0$  
for $z\in M^*=M\setminus\{z_1=z_2=0\}$, it follows from the implicit function theorem that $M^*$ is a hypersurface 
of smoothness at least $\mathcal{C}^\ell$. Representing  $M^*$ as a graph over $y_2\not=0$, we see that $M^*$ is 
 $\mathcal{C}^\ell$-smooth but not $\mathcal{C}^{\ell+1}$-smooth.  Since any biholomorphic 
map near 0 sending $M$ onto another $M$ for a different $\gamma_{\ell}$ must preserve the smoothness class of 
${M}^*$, it follows that the $M$'s are in general not biholomorphic for different $\gamma$'s.

\begin{theorem}\label{thm:M}
Let $M$ be defined as in~\eqref{mgamma}, and let $U\subset\Omega$ be a
neighbourhood of the origin in $\mathbb C^n$. Then there exists a
neighbourhood $V$ of the origin such that any bounded CR function
$f$ on $M\cap U$ extends to a holomorphic function $F$ in
$V^-=\{\rho<0\}\cap V$. Further, for $k\geq 0$, if $f$ is $\mathcal
C^k$-smooth on $M$, then $F$ extends to a $\mathcal C^k$-smooth
function on $V^-\cup (M\cap V)$, and $F|_{M\cap V} = f$.
\end{theorem}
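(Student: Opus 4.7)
Proof plan. The key idea is that $M$, while non-minimal at the origin (it contains the complex hypersurface $Z=\{z_1=0\}$), has a much nicer structure at its smooth points: on $M^* = M\setminus Z$, the pluriharmonicity of $\Re(z_1z_2)$ gives $\partial\dbar\rho = \partial\dbar(|z_1|^2\gamma)$, and a direct computation shows this Hermitian form is positive semidefinite near $0$ with a distinguished positive eigenvalue in the $z_1$-direction. Restricted to $T^{1,0}M^*$, the Levi form has at least one positive eigenvalue at each point with $z_1\neq 0$ (and is strictly positive definite when $n=2$). By Lewy's extension theorem, the CR function $f|_{M^*}$ extends to a bounded holomorphic function $F_W$ on a thin one-sided neighbourhood $W\subset V^-$ of $M^*$, with distributional boundary values $f|_{M^*}$.

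Next I construct a family of analytic discs that will propagate $F_W$ to all of $V^-\cap V$. For each small $(z_2,\tilde z)$ with $z_2\neq 0$, the slice $M\cap(\cx\times\{z_2\}\times\{\tilde z\})$ in the $z_1$-plane is the zero set of $\zeta\mapsto \Re(\zeta z_2)+|\zeta|^2\gamma(\zeta,z_2,\tilde z)$. When $\gamma\equiv\gamma(0)$, this is exactly the circle through the origin centred at $-\bar z_2/(2\gamma(0))$ with radius $|z_2|/(2\gamma(0))$, bounding a Jordan region contained in $V^-$. For the actual $\gamma$, the implicit function theorem gives a small smooth perturbation that still bounds a Jordan region $D_{z_2,\tilde z}\subset V^-$ whose boundary meets $Z$ at the single point $(0,z_2,\tilde z)$. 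Parametrizing $\overline{D_{z_2,\tilde z}}$ by the closed unit disc yields a continuous family of analytic discs $\phi_{z_2,\tilde z}\colon\overline{\Delta}\to\overline{V^-}$ with boundaries in $M$, and as $(z_2,\tilde z)$ varies over a punctured neighbourhood of $0$ in $\cx^{n-1}$, their interiors sweep out all of $V^-\cap V$ (note that $V^-\cap\{z_2=0\}$ is empty near $0$ because $\gamma(0)>0$ forces $\rho\geq 0$ on $\{z_1\ne 0,\, z_2=0\}$).

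Combining these, for each $(z_2,\tilde z)$, the function $F_W$ is holomorphic in a $\cx^n$-neighbourhood of $\partial D_{z_2,\tilde z}\setminus\{(0,z_2,\tilde z)\}$. The Cauchy integral
\[
F(z_1,z_2,\tilde z)=\frac{1}{2\pi i}\oint_{\partial D_{z_2,\tilde z}}\frac{F_W(w,z_2,\tilde z)}{w-z_1}\,dw
\]
gives a holomorphic extension onto $D_{z_2,\tilde z}$; the lone missing boundary point in $Z$ contributes nothing to the integral because $F_W$ is bounded (a removable-singularity argument). Holomorphic dependence on $(z_2,\tilde z)$ is immediate from differentiation under the integral, using the smooth dependence of $\partial D_{z_2,\tilde z}$ on the parameters, and uniqueness of analytic continuation guarantees $F=F_W$ on $W$. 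This yields a holomorphic function $F$ on $V^-\cap V$ with $\mathrm{bv}\,F=f$ on $M^{\mathrm{reg}}\cap V$.

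For the $\mathcal{C}^k$ statement, one combines the $\mathcal{C}^k$-regularity of Lewy extensions of $\mathcal{C}^k$ CR functions on smooth hypersurfaces (see, e.g., \cite[Thm.~7.5.1]{ber}) with the standard boundary regularity of the Cauchy integral for $\mathcal{C}^k$ data. The main obstacle I expect is the behaviour near the singular set $Z\cap M$, where $M$ itself is not smooth and where each disc boundary has its distinguished point: the resolution is that $Z$ is a complex hypersurface (hence removable for bounded holomorphic functions), combined with the continuity of $f$ on all of $M$ assumed in the $\mathcal{C}^k$ case, so the extension $F$ takes the correct value $f$ on $M\cap Z$ by continuity.
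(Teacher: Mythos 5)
Your overall architecture (Lewy extension of $f$ to a thin one-sided collar $W$ of $M\setminus\{z_1=0\}$, then propagation into all of $V^-$ by analytic discs) matches the paper's, but the propagation step has a genuine gap. The Cauchy integral
\[
F(z_1,z_2,\tilde z)=\frac{1}{2\pi i}\oint_{\partial D_{z_2,\tilde z}}\frac{F_W(w,z_2,\tilde z)}{w-z_1}\,dw
\]
does define a function holomorphic in $z_1$ on the slice disc, but it does \emph{not} agree with $F_W$ on the overlap unless one already knows that $F_W(\cdot,z_2,\tilde z)$ extends holomorphically from the thin inner collar of $\partial D_{z_2,\tilde z}$ to the whole disc --- which is precisely what has to be proved. (Compare $g(w)=1/w$ on an annulus $\frac12<|w|<1$: its Cauchy integral over the outer circle is identically $0$.) So the sentence ``uniqueness of analytic continuation guarantees $F=F_W$ on $W$'' is circular. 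The standard tool that does force the extension is the Kontinuit\"atssatz, but it needs a continuous family of closed discs whose boundaries stay where the function is defined and whose \emph{initial member lies entirely inside} $W$. Your slice discs cannot be so anchored: $D_{z_2,\tilde z}$ has radius comparable to $|z_2|$ with centre at distance roughly $|z_2|/2$ from $M$, so it never fits inside a thin collar of $M\setminus\{z_1=0\}$, and as $z_2\to 0$ the discs shrink only to points of the singular stratum $\{z_1=z_2=0\}$, where $W$ gives no foothold. The paper instead uses discs $\zeta\mapsto\bigl(\zeta,\,\abs{w_1}^2/\zeta-2\overline{\alpha},\,\tilde w\bigr)$ which move in both $z_1$ and $z_2$, have boundaries entirely in $M_1\setminus\{z_1=0\}$, and --- crucially --- shrink to a \emph{smooth} point $p_0\in M_1\setminus\{z_1=0\}$, so that the family can be started inside the collar and the Kontinuit\"atssatz applies.

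Even granting the continuation, a second point you pass over is single-valuedness: continuation along discs or paths could a priori yield a multivalued function on $V^-$. The paper devotes a separate argument (Proposition~\ref{prop-schlicht}) to this, proving the envelope of holomorphy of the collar is schlicht via Trapani's complex-retraction theorem applied to $U^+\subset U\setminus\{z_1=0\}$, combined with a Cousin-problem decomposition. Your slice discs do appear in the paper, but only at the very end, to obtain $\mathcal{C}^k$ regularity at $\Sigma=\{z_1=z_2=0\}$ via the maximum principle on the shrinking discs $\Delta(z)=\{\zeta_2=z_2,\ \tilde\zeta=\tilde z\}$; that part of your sketch is in the right spirit, but it rests on the unproven extension step above.
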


Observe that $\{z_1=0\}\cap \Omega\subset M$, so that $M$ is non-minimal. For general $\ell$, 
$M^*=M\setminus\{z_1=z_2=0\}$ is a $\mathcal{C}^\ell$-smooth hypersurface with a quadratic singularity of codimension 3, 
in particular, the singularity is isolated if $n=2$. Indeed, for $c>0$, denote by $\rho_c$ the real quadratic form
\begin{equation}
\label{eqn-rhoc} \rho_c(z) = \Re(z_1z_2)+c\abs{z_1}^2
\end{equation}
on $\cx^n$, and let
\begin{equation}\label{mc}
 M_c =\{z\in\cx^n \colon \rho_c(z)=0\}.\end{equation}
Then the defining function $\rho$ of $M$ near 0 is of the form
$\rho=\rho_{\gamma(0)}+h$, where $\rho_{\gamma(0)}$ is as in
\eqref{eqn-rhoc}, and $h(z)= \abs{z_1}^2(\gamma(z)-\gamma(0))=
O(\abs{z}^3)$. If $n=2$, the real quadratic form $\rho_{\gamma(0)}$
is non-degenerate, and has two positive and two negative
eigenvalues. Therefore, by the Morse Lemma, there is a
$\mathcal{C}^\ell$-diffeomorphism of a neighbourhood of 0 onto
another neighbourhood of 0 in $\cx^2$ that maps $M$ onto the real
quadratic cone $M_{\gamma(0)}$. The latter is, in fact, the tangent
cone of $M$ at the origin. If $n\geq 3$, the quadratic form
$\rho_{\gamma(0)}$ is degenerate, but still there is a
$\mathcal{C}^\ell$-diffeomorphism $\Phi$  in a neighbourhood of $0$
in $\cx^n$ which maps $M$  onto the real quadratic cone
$M_1=\{\rho_1=0\}$. The map $\Phi$  can be given explicitly by
\begin{equation}\label{phi-eq}
\Phi(z) = \left(\sqrt{\gamma(z)} z_1, \frac{1}{\sqrt{\gamma(z)}}z_2, \tilde z\right),
\end{equation}
(valid also for $n=2$). Observe that $\Phi$ maps the complex hypersurface $\{z_1=0\}$, which makes $M$ non-minimal,
onto itself.

One can verify that the set $M_1\setminus\{z_1=0\}$ is connected,
and therefore, $M\setminus\{z_1=0\}$ is also connected. If
$\gamma(z)\equiv c$, then $\Phi$ is a $\cx$-linear isomorphism
between $M_c$, given by \eqref{mc}, and $M_1$. A simple computation
shows that there is a neighbourhood $U\subset\Omega$ of 0 such that
the set $U^-=\{z\in U\colon\rho(z)<0\}$ is  pseudoconvex. In fact
the Levi form has one positive eigenvalue  at each boundary point in
$(M\setminus\{z_1=0\})\cap U$ (which is therefore strongly
pseudoconvex if $n=2$), so the hypothesis of the Lewy extension
theorem holds.

The proof of Theorem~\ref{thm:M} can be outlined as follows. First
we construct explicitly a family of analytic discs attached to $M_1$
and use the Kontinuit\"{a}tssatz to prove that holomorphic functions
defined in some thin neighbourhood $\omega$ of
$M_1\setminus\{z_1=0\}$ extend analytically along any path in a
bigger one-sided neighbourhood of the origin, the size of which is
independent of $\omega$. This is done in Section~\ref{s:ext}. Then
we show in Section~\ref{s:env} that the analytic continuation from
$\omega$ does not yield multiple-valued functions. In the
terminology of \cite{mp}, this means that the complex hypersurface $\{z_1=0\}$ is
(locally) {\em $\mathcal{W}$-removable} at the origin. Finally, in
Section~\ref{s:proof} we conclude the proof by showing that every CR
function on $M\setminus\{z_1=0\}$ near the origin extends to a
holomorphic function on a one-sided neighbourhood of the origin, and
that the extension has the required boundary regularity on $M$.

\subsection{Construction of the extension.}\label{s:ext}
\begin{prop}\label{prop-pathext}
Let \[ M_1=\{z\in\cx^n\colon\rho_1(z)=0\},\] where
\[\rho_1(z)=\Re(z_1z_2)+\abs{z_1}^2,\]
and let $U$ be a neighbourhood of 0 in $\cx^n$. Let $\omega\subset
U^-:=U\cap\{\rho_1<0\}$ be a connected one-sided neighbourhood of
$$S_1=(M_1\setminus\{z_1=0\})\cap U.$$
Then there is a neighbourhood $V$ of the origin in $\cx^n$, such
that given any $p\in V^-:=V\cap\{\rho_1<0\}$, there is a path
$\tau\subset V^-$ starting in $\omega\cap V^-$ and terminating at
$p$, along which any holomorphic function in $\omega$ admits
analytic continuation.
\end{prop}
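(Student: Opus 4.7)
The plan is to use an explicit family of analytic discs attached to $M_1$ together with the Kontinuit\"atssatz (equivalently, the Cauchy integral along the disc boundary).

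For each $(a,\tilde c)\in\cx^*\times\cx^{n-2}$ the slice of $M_1$ through $\{z_2=a,\;\tilde z=\tilde c\}$ is the circle $|z_1+\bar a/2|=|a|/2$ in the $z_1$-plane, which bounds the closed analytic disc
\[
D(a,\tilde c)=\{(z_1,a,\tilde c):|z_1+\bar a/2|\le|a|/2\}.
\]
Its interior lies in $\{\rho_1<0\}$, and $\partial D(a,\tilde c)\cap\{z_1=0\}$ is the single point $(0,a,\tilde c)$. These discs foliate $\{\rho_1<0\}\cap\{z_2\ne 0\}$: every $p=(p_1,p_2,\tilde p)\in V^-$ automatically has $p_2\ne 0$ (because $\rho_1(p)<0$ forces $|p_1|<|p_2|$), and sits in the interior of $D(p_2,\tilde p)$ with depth parameter $t_p:=2|p_1+\bar p_2/2|/|p_2|=\sqrt{1+4\rho_1(p)/|p_2|^2}<1$.

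Next I introduce the concentric shrinkages $D_t(a,\tilde c)=\{(z_1,a,\tilde c):|z_1+\bar a/2|\le t|a|/2\}$ for $t\in(0,1)$. Each $\partial D_t(a,\tilde c)$ lies at Euclidean distance at least $|a|(1-t)/2$ from $M_1\cup\{z_1=0\}$. For a given $p\in V^-$ I aim to choose $t\in(t_p,1)$ so that $\partial D_t(p_2,\tilde p)\subset\omega$; once such $t$ exists, the Cauchy integral over $\partial D_t(p_2,\tilde p)$ (on which $f\in\mathcal{O}(\omega)$ is known since $\partial D_t(p_2,\tilde p)$ sits inside the open set $\omega$) holomorphically extends $f$ into the whole interior $D_t^\circ$, and in particular gives a value at $p$. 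The required path $\tau\subset V^-$ is then the concatenation of a short arc inside $\omega\cap V^-$ to some point of the annular intersection $\omega\cap D_t^\circ$ (nonempty because $\omega$ is open around the boundary circle) followed by a segment inside $D_t^\circ$ to $p$; along $\tau$ the function $f$ is known either by being in $\omega$ or by the preceding Cauchy extension on $D_t^\circ$.

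The main obstacle is that $V$ is required to be independent of $\omega$, yet $\omega$ can be arbitrarily thin near the excluded hypersurface $\{z_1=0\}$. Denoting by $t_*(p_2,\tilde p)<1$ the infimum of $t$ for which $\partial D_t(p_2,\tilde p)\subset\omega$, one finds that $t_*$ may approach $1$ as $p_2\to 0$, while for $p$ close to $M_1$ the depth $t_p$ is also close to $1$; reconciling $t_p<t<1$ with $t\ge t_*$ uniformly in $p\in V^-$ is the quantitative heart of the construction. The subanalytic/Lojasiewicz machinery from \S\ref{sec-subanalytic}---especially the regular-situatedness of $M_1$ with respect to $\{z_1=0\}$---gives explicit control of how $\partial D_t(p_2,\tilde p)$ approaches $S_1$ relative to how it approaches $\{z_1=0\}$, which should permit a choice of $V$ (shaped as a neighbourhood of the origin in which $|p_2|$ is controlled from below in terms of the depth of $p$ inside $\{\rho_1<0\}$) for which the desired $t$ exists regardless of $\omega$.
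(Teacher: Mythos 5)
Your overall strategy (analytic discs attached to $M_1$ plus the Kontinuit\"atssatz) is the same as the paper's, but the specific family of discs you choose cannot work, and the ``quantitative heart'' you defer at the end is not a technical loose end --- it is a genuine gap that cannot be closed for that family. The boundary of the full slice disc $D(a,\tilde c)$ touches $\{z_1=0\}$ at $(0,a,\tilde c)$, and on the shrunken circle $\partial D_t(a,\tilde c)$ one has $\rho_1\equiv -(1-t^2)\abs{a}^2/4$, while the point of $\partial D_t$ nearest to $\{z_1=0\}$ has $\abs{z_1}=(1-t)\abs{a}/2$; since $\abs{\nabla\rho_1}$ is comparable to $\abs{a}$ there, that point lies at distance comparable to $(1-t)\abs{a}$ from $M_1$ \emph{and} from $\{z_1=0\}$ simultaneously. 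The set $\omega$, however, is an arbitrary open one-sided neighbourhood of $S_1$ with no subanalytic structure, so no {\L}ojasiewicz or regular-situatedness estimate applies to it (those inequalities concern $M_1$ and $\{z_1=0\}$, not $\omega$). Take for instance $\omega=\{z\in U^-\colon \dist(z,M_1)<e^{-1/\abs{z_1}}\}$: membership of the near-axis point of $\partial D_t(p_2,\tilde p)$ in $\omega$ would require $s\lesssim e^{-c/s}$ with $s=(1-t)\abs{p_2}$, which fails for all small $s$; and for $t$ bounded away from $1$ the circle is simply too far from $M_1$ to be in $\omega$. Hence for $p$ near the origin \emph{no} admissible $t$ exists, and no choice of $V$ independent of $\omega$ can be made along these lines.

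The paper's Lemma~\ref{lem:discs} avoids this by using discs whose boundaries lie entirely in $S_1$, i.e., uniformly away from $\{z_1=0\}$: for $w\in V^-$ set $\alpha=\tfrac12(w_1-\overline{w_2})$, $\Sigma_w=\{\zeta\colon\abs{\zeta-\alpha}^2\le\abs{\alpha}^2-\abs{w_1}^2\}$ and $\phi_w(\zeta)=\bigl(\zeta,\abs{w_1}^2/\zeta-2\overline{\alpha},\tilde w\bigr)$. These discs are not contained in slices $\{z_2=\mathrm{const}\}$; since $w_1\neq0$ forces $0\notin\Sigma_w$, the boundary $\phi_w(\partial\Sigma_w)$ is a compact subset of $M_1\setminus\{z_1=0\}$, so a slight concentric shrinking places it inside $\omega$ no matter how thin $\omega$ is. Combined with the fact that the discs shrink to the point $p_0=(1/8,-1/8,\tilde 0)\in S_1$ as $w\to p_0$ (which supplies a starting disc wholly contained in $\omega$), the Kontinuit\"atssatz then yields continuation along any path in $V^-$. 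If you wish to salvage your write-up, you must replace the $z_2$-slices by discs of this kind; the rest of your path construction would then go through.
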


The proof of the proposition relies on the following lemma which will be also
used later.

\begin{lemma}\label{lem:discs}
Let $V=B\left(0,\frac{1}{3}\right)$, and let
$V^-=V\cap\{\rho_1<0\}$. There exists a continuous family of
analytic discs $\{D_w\}_{w\in V^-}$ in $\mathbb C^n$, with the
following properties.
\begin{enumerate}
\item $D_w\subset B(0,1)$.
\item $w\in D_w$.
\item $\partial D_w \subset S_1$.
\item Let $p_0=\left(\frac{1}{8},-\frac{1}{8},\tilde{0}\right)\in S_1\cap V$,
where $\tilde{0}\in\cx^{n-2}$.
 Then  the discs $\{D_w\}$ shrink to
$\{p_0\}$ as $V^-\ni w\to p_0$.
\end{enumerate}
\end{lemma}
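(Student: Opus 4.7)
The plan is to build the family $\{D_w\}$ explicitly by solving a Riemann--Hilbert-type boundary problem on $\overline\Delta$. The starting observation, based on the identity $\rho_1(z)=|z_1+\bar z_2/2|^2-|z_2/2|^2$, is that for fixed $z_2\neq 0$ the slice $M_1\cap\{z_2,\tilde z=\mathrm{const}\}$ is a circle in the $z_1$-plane passing through the origin; in particular the ``obvious'' complex disc $\{(z_1,w_2,\tilde w):|z_1+\bar w_2/2|\leq|w_2|/2\}$ contains $w$ and has its boundary on $M_1$, but meets $\{z_1=0\}$ in one boundary point. I would therefore perturb this naive family into genuine holomorphic discs whose boundaries avoid $\{z_1=0\}$.

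Concretely, I look for discs of the form $z_1(\zeta)=a\zeta+b$, $\tilde z(\zeta)=\tilde w$, $z_2(\zeta)=h(\zeta)/(a\zeta+b)$, with $a,b\in\cx$ depending continuously on $w$. The requirement $\rho_1(z(\zeta))=0$ on $|\zeta|=1$ translates to $\Re h(\zeta)=-|z_1(\zeta)|^2$ on the unit circle, and solving this Schwarz problem forces
\[
  h(\zeta) = -|a|^2-|b|^2-2a\bar b\,\zeta + iC,\qquad C\in\rl.
\]
The single condition $|b|>|a|$ makes $z_1(\zeta)$ nowhere zero on $\overline\Delta$ and $z_2(\zeta)$ holomorphic there, so property~(3) of the lemma holds. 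Imposing $z(\zeta_0)=w$ reduces to the algebraic system
\[
  |w_1-b|^2 = |a|^2+\rho_1(w),\qquad a\zeta_0 = w_1-b,\qquad C=\Im(w_1w_2)+2\Im(w_1\bar b),
\]
in which the condition $\zeta_0\in\Delta$ follows automatically from $\rho_1(w)<0$.

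The crux is to choose $b=b(w)$ continuously on $V^-$ so that (i) $|b|>|a|$, (ii) $D_w\subset B(0,1)$, and (iii) $D_w$ collapses to $\{p_0\}$ as $w\to p_0$. For (iii), in a neighbourhood of $p_0=(1/8,-1/8,\tilde 0)$ I take $b(w)=w_1$: then $\zeta_0=0$, $|a|=\sqrt{-\rho_1(w)}\to 0$, and $C=\Im(w_1w_2)\to 0$, so that both the $z_1$-image (a disc centred at $w_1$ of radius $|a|$) and the M\"obius $z_2$-image (centred close to $-\bar w_1$ with radius close to $|a|$) shrink uniformly to the single point $p_0$. The choice $b=w_1$ is valid in a neighbourhood of $p_0$ because $2|(p_0)_1|^2+\Re((p_0)_1(p_0)_2)=1/64>0$, and the constraint $|b|>|a|$ is then equivalent to $2|w_1|^2+\Re(w_1w_2)>0$. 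On the complement of this neighbourhood inside $V^-$, I would use a different continuous choice of $b$ --- for instance, confining $b$ to the affine line where $C(w,b)=0$, since the vanishing of $C$ gives the M\"obius $z_2$-image disc the tight radius $|a|$ (avoiding the blow-up factor $\sqrt{1+C^2/(|b|^2-|a|^2)^2}$) --- and glue the two recipes by a smooth partition of unity on $V^-$.

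The main technical obstacle is the uniform size estimate. Because $z_2(\zeta)$ is a M\"obius function of $\zeta$, its image on $\overline\Delta$ is an explicit disc in $\cx$ whose centre and radius are computable directly from $a$, $b$, $C$; together with the analogous bound for $z_1(\zeta)$, verifying $D_w\subset B(0,1)$ reduces to a direct estimate on $|a|+|b|$ in terms of $|w|$, using the formulas above and the constraint $|w|<1/3$. Continuity of $D_w$ in $w$ is then immediate from the closed-form construction, while properties (2) and (3) are built in by design.
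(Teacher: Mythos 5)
Your Riemann--Hilbert framework is sound and in fact subsumes the paper's construction. Once the Schwarz problem is solved as you describe, the only free parameter is $b$ (since $\abs{a}^2=\abs{w_1-b}^2-\rho_1(w)$ is forced), and the paper's discs are exactly your family with the single uniform choice $b=\frac{1}{2}(w_1-\overline{w_2})=:\alpha$: there $z_1$ ranges over the disc $\abs{\zeta-\alpha}^2\leq\abs{\alpha}^2-\abs{w_1}^2$ and $z_2=\abs{w_1}^2/\zeta-2\overline{\alpha}$. This choice lies on your line $C=0$ and satisfies $\abs{b}>\abs{a}$ on \emph{all} of $V^-$, because with $b=\alpha$ one computes $\abs{a}^2=\abs{\alpha}^2-\abs{w_1}^2<\abs{\alpha}^2$ (note $w_1\neq 0$ whenever $\rho_1(w)<0$), and it also makes the discs shrink at $p_0$ since $\abs{\alpha}^2-\abs{w_1}^2\to 0$ there. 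So no gluing and no separate recipe near $p_0$ is needed.

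As written, however, your proposal has a genuine gap at exactly the point where the lemma has content: you never produce a continuous choice of $b(w)$ valid on all of $V^-$, and you never carry out the uniform bound $D_w\subset B(0,1)$. The local choice $b=w_1$ fails on a large portion of $V^-$: it requires $2\abs{w_1}^2+\Re(w_1w_2)>0$, whereas $\rho_1(w)<0$ only gives $\Re(w_1w_2)<-\abs{w_1}^2$, so for instance $w_1$ small and $w_2$ of unit size with $\Re(w_1w_2)\approx -\abs{w_1}\abs{w_2}$ violates it. The ``different continuous choice of $b$ elsewhere, glued by a partition of unity'' is left entirely unspecified; the gluing itself is harmless because the admissible set $\{b\colon 2\Re(w_1\overline{b})+\Re(w_1w_2)>0\}$ is convex, but the containment $D_w\subset B(0,1)$ is not preserved under convex combinations of choices of $b$ and must be verified for whatever $b(w)$ you settle on --- this estimate is what fixes the radius $\frac{1}{3}$ of $V$ and is the one computation the proof cannot omit. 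Replacing your two local recipes by the single choice $b=\frac{1}{2}(w_1-\overline{w_2})$ and then running the boundary estimate as in the paper (on $\partial\Sigma_w$ one gets $\abs{\phi_w(\zeta)}\leq 4\abs{\alpha}+\abs{\tilde{w}}\leq 3\abs{w}<1$, and the maximum principle handles the interior) closes the gap.
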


\begin{proof}
Given the point $w=(w_1,w_2,\tilde{w})\in\mathbb C^n $, let
$\alpha=\frac{1}{2}(w_1-\overline{w_2})$. (We suppress the
dependence of $\alpha$ on $w$ for notational clarity.) Consider the
subset of $\cx$ given by
\begin{equation}\label{eq-sw}
\Sigma_w = \{\zeta\in\cx\colon \abs{\zeta-\alpha}^2
\leq\abs{\alpha}^2-\abs{w_1}^2\},
\end{equation}
which, depending on the right hand side, may be a closed disc, a
point, or empty. If $\rho_1(w)<0$, it is easily verified that
$\abs{w_1-\alpha}^2<\abs{\alpha}^2-\abs{w_1}^2$. Therefore, if $w\in
V^-$, the set $\Sigma_w$ contains the point $w_1$, and thus, it is a
disc of positive radius $\sqrt{\abs{\alpha}^2-\abs{w_1}^2}$. It also
follows from the definition of $\Sigma_w$ that if $w_1\not=0$, then
$0\not\in \Sigma_w$.

For $w\in V^-$, we consider the map $\phi:\Sigma_w\rightarrow \cx^n$
given by
\[ \phi_w:\zeta\mapsto
\left(\zeta,\frac{\abs{w_1}^2}{\zeta}-2\overline{\alpha},\tilde{w}\right),\]
and let $D_w =\phi_w(\Sigma_w)$. Note that this is well defined
since $w_1\not=0$. A computation shows that $\phi_w(w_1)=w$, and
$\rho_1(\phi_w(\zeta))=0$ if $\zeta\in\partial \Sigma_w$. Therefore,
$D_w$ is an analytic disc contained in $\{\rho_1<0\}$. It passes
through point $w$, and its boundary is attached to $M_1$.
Furthermore, $D_w\subset B(0,1)$. Indeed, to see this, it is
sufficient to show that $\partial D_w=\phi(\partial \Sigma_w)\subset
B(0,1)$, and then apply the maximum principle. If $\zeta\in
\partial \Sigma_w$, we have $\abs{\zeta-\alpha}^2
=\abs{\alpha}^2-\abs{w_1}^2$, or
\begin{equation}\label{eqa}
\abs{w_1}^2 = 2\Re\overline{\alpha}\zeta -\abs{\zeta}^2.
\end{equation}
Now, with $\zeta$ as above, we have,
\begin{align*}
\abs{\phi(\zeta)}&=\left(\abs{\zeta}^2+\abs{\frac{\abs{w_1}^2}{\zeta}
-2\overline{\alpha}}^2+\abs{\tilde{w}}^2\right)^\frac{1}{2}\\
&\leq\abs{\zeta}+\abs{\frac{\abs{w_1}^2}{\zeta}-2\overline{\alpha}}+\abs{\tilde{w}}\\
&\leq \abs{\zeta}
+\frac{2\Re\overline{\alpha}{\zeta}-\abs{\zeta}^2}{\abs{\zeta}}
+2\abs{\alpha}+\abs{\tilde{w}}& \text{(using \eqref{eqa})} \\
&= 2\abs{\alpha} +
2\frac{\Re\overline{\alpha}{\zeta}}{\abs{\zeta}}+\abs{\tilde{w}}\\
&\leq4\abs{\alpha}+\abs{\tilde{w}}&(\text{using Cauchy-Schwarz})\\
&\leq2(\abs{w_1}+\abs{w_2})+\abs{\tilde{w}}\\
&\leq3\abs{w}&(\text{using Cauchy-Schwarz again})\\
&<1.
\end{align*}
Also observe that $D_w\to\{p_0\}$ as $w\to p_0$, $w\in V^-$. This follows directly by
taking the limit in \eqref{eq-sw} and noting that $\Sigma_w$ shrinks to a point as
$w\rightarrow p_0$.
\end{proof}

\begin{proof}[Proof of Proposition~\ref{prop-pathext}]
Since $M_1$ is invariant under dilations, we may assume without loss
of generality that $U$ is the unit ball. Let $V$, $V^-$ and $p_0$ be as in
Lemma~\ref{lem:discs}. The continuous family of analytic discs $D_w$ constructed
in Lemma~\ref{lem:discs} can be used
to prove analytic continuation of holomorphic functions from $\omega$ to $V^-$.
Indeed, since $V^-$ is connected, and $p_0\in \partial V^-$, there is a path
$\tau:[0,1]\rightarrow\cx^2$, such that $\tau(0)=p_0$, $\tau(1)=p$ and
$\tau((0,1])\subset V^-$. There exists $\eta>0$ so small that $\tau^{-1}(\omega)$
contains the interval $(0,2\eta)$, and such that the interior of the disc
$D_{\tau(\eta)}$ is completely contained in $\omega$. Then the
restriction of $\tau$ to the interval $[\eta,1]$ is a path which
starts in $\omega$ and ends at $p$. We claim that any holomorphic
function on $\omega$ admits a holomorphic extension along this path
$\tau$. To see this observe that the discs $D_{\tau(t)}$ are attached to $S$, so after
shrinking them slightly, we obtain discs $\Delta_t\subset D_{\tau(t)}$ such that
$\partial\Delta_t\subset \omega$ for each $t$, $\eta\leq t\leq 1$. Since
$\Delta_\eta\subset\omega$, the result follows from the Kontinuit\"atssatz.
\end{proof}

\subsection{Schlichtness of the envelope of holomorphy of $\omega$.}\label{s:env}
It follows from Proposition~\ref{prop-pathext} that every holomorphic function on $\omega$
extends to a possibly multiple-valued holomorphic function on~$V^-$. The next proposition
shows that the extension is, in fact, single-valued.

\begin{prop}\label{prop-schlicht}
If in Proposition~\ref{prop-pathext}, the set $U$ is a ball, the
envelope of holomorphy $\mathcal{E}(\omega)$ of $\omega$ is
schlicht.
\end{prop}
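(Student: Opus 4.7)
My plan is to establish schlichtness by constructing an explicit single-valued holomorphic extension of every $f \in \mathcal{O}(\omega)$ from $\omega$ to $V^-$, using the continuous disc family $\{D_w\}_{w \in V^-}$ supplied by Lemma~\ref{lem:discs}. Since $w \in V^-$ forces $w_1 \neq 0$ (otherwise $\rho_1(w) = 0$), the boundary $\phi_w(\partial \Sigma_w) = \partial D_w \subset S_1$ stays at positive distance from $\{z_1 = 0\}$, and because $\omega$ is a one-sided neighbourhood of $S_1$, a small concentric shrinking $\Sigma_w^\e \Subset \Sigma_w$ has the property that the closed annulus $\phi_w(\overline{\Sigma_w} \setminus \Sigma_w^\e)$ lies in $\omega$. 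I define
\[
F(w) := \frac{1}{2\pi i}\oint_{\partial \Sigma_w^\e} \frac{f(\phi_w(\zeta))}{\zeta - w_1}\, d\zeta,
\]
which is independent of the shrinking parameter $\e$ by Cauchy's theorem applied in an annulus on which $f \circ \phi_w$ is holomorphic (the simple pole at $\zeta = w_1$ sits inside both contours, as verified in the proof of Lemma~\ref{lem:discs}). To see that $F$ really extends $f$: as $w \to p_0$ in $V^-$, the discs $D_w$ collapse to $p_0 \in S_1$, and for $w$ close enough to $p_0$ the entire disc $D_w$ lies inside $\omega$, whence the Cauchy formula yields $F(w) = f(w)$. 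Continuity of $F$ on $V^-$ is immediate from the continuous dependence of $\Sigma_w$ and $\phi_w$ on $w$.

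The heart of the matter is to show that $F$ is holomorphic on $V^-$, since the defining formula depends non-holomorphically on $w$ (both $\alpha$ and $\abs{w_1}^2$ involve $\overline{w}$), so one cannot differentiate it directly. The plan is to identify $F$ locally with an extension produced by the Kontinuit\"atssatz, which is holomorphic by construction. Fix $w_0 \in V^-$ and a path $\tau \subset V^-$ from $p_0$ to $w_0$; for each $t \in [0,1]$, slightly shrink $D_{\tau(t)}$ to a disc $\Delta_t$ so that $\partial \Delta_t \subset \omega$ and the family varies continuously, and note that $\Delta_0$ can be chosen entirely inside $\omega$ (since $\omega$ contains a full one-sided neighbourhood of $p_0$ and the discs collapse there). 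The Kontinuit\"atssatz then delivers a single-valued holomorphic extension $\tilde f_\tau$ of $f$ to an open neighbourhood of $\bigcup_t \Delta_t$, in particular to an open neighbourhood $U_0$ of $w_0$. Because $\tilde f_\tau = f$ on $\omega$ by analytic continuation and $\partial \Delta_w \subset \omega$ for every $w$ near $w_0$, the Cauchy integral formula for $\tilde f_\tau$ on $\Delta_w$ yields $\tilde f_\tau(w) = F(w)$; hence $F \equiv \tilde f_\tau$ on $U_0$, so $F$ is holomorphic at $w_0$.

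Putting this together, every $f \in \mathcal{O}(\omega)$ extends to a single-valued holomorphic function on the connected open set $\omega \cup V^- \subset \cx^n$, realizing the extension as a schlicht domain in $\cx^n$ and proving that $\mathcal{E}(\omega)$ is schlicht. The main obstacle is precisely the holomorphy argument above: the intrinsic disc-based formula for $F$ is not manifestly holomorphic, and bridging to the Kontinuit\"atssatz extension requires the uniform control provided by Lemma~\ref{lem:discs}, namely that the discs stay inside $B(0,1)$, that their boundaries remain uniformly away from $\{z_1 = 0\}$ on compact subsets of $V^-$, and that they collapse to a point at $p_0$ so that the initial disc $\Delta_0$ fits inside $\omega$.
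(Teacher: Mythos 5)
Your strategy---define a path-independent extension $F$ by a Cauchy integral over the discs of Lemma~\ref{lem:discs}, then prove holomorphy by matching $F$ with a Kontinuit\"atssatz continuation---is appealing, but the matching step contains the essential difficulty rather than resolving it. To get $\tilde f_\tau(w)=F(w)$ from the Cauchy formula on $\Delta_w$ you need $\tilde f_\tau=f$ on a neighbourhood of the \emph{entire} boundary circle $\partial\Delta_w$ for every $w$ near $w_0$; you justify this with ``$\tilde f_\tau=f$ on $\omega$ by analytic continuation.'' But the Kontinuit\"atssatz, in the form used in Proposition~\ref{prop-pathext}, produces only a chain of function elements along the path $\tau$---a priori a multivalued continuation---and does not assert that there is a single-valued $\tilde f_\tau$ on a neighbourhood of $\bigcup_t\Delta_t$, nor that the terminal element agrees with the original $f$ on $\omega$ near $\partial\Delta_{w_0}$. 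Whether the continuation stays anchored to $f$ along the disc boundaries throughout the deformation is precisely the monodromy question that this proposition is meant to settle; assuming it makes the argument circular. This is exactly why the paper separates Proposition~\ref{prop-pathext} (which yields only a possibly multivalued extension) from Proposition~\ref{prop-schlicht}.

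There is a second, independent gap: even granting that $F$ is holomorphic on $V^-$ and equals $f$ near $p_0$, you have produced one single-valued extension of $f$ to $\omega\cup V^-$ (and even $F=f$ on all of $\omega\cap V^-$ needs $\omega\cap V^-$ to be connected, which is not checked, since $\omega$ is an arbitrary one-sided neighbourhood). That does not show $\mathcal{E}(\omega)$ is schlicht: the envelope equals $\mathcal{E}(\omega\cup V^-)$ and could still be multi-sheeted over points reached by paths not realized by your disc families, or over points outside $V^-$. The paper closes both gaps at once by a global comparison: Trapani's complex-retraction theorem shows $\mathcal{E}(U^+)$ is schlicht, the Lewy extension identifies $\mathcal{O}(U^+)$ with $\mathcal{O}(U^\sharp)$ where $U^\sharp=U^+\cup S_1\cup\tilde{\omega}$, and a Cousin decomposition $f=F^\sharp-F^-$ on the pseudoconvex set $U\setminus\{z_1=0\}$ converts any two discrepant continuations of $f$ into two discrepant continuations of $F^\sharp$, contradicting schlichtness of $\mathcal{E}(U^\sharp)$. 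Some such device for comparing \emph{arbitrary} pairs of continuations is needed; the explicit disc formula for $F$ by itself does not supply it.
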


\begin{proof}
Without loss of generality $U$ is the unit ball. We first show that
$\mathcal{E}(U^+)$ is schlicht. While it is possible to prove this
using a direct monodromy argument, we will deduce this from a
general result due to Trapani. Suppose that $\Omega\subset D$ are
domains in a Stein manifold. Define a {\em complex retraction} of
$D$ into $\Omega$ to be a homotopy $F_t:D\rightarrow D$, $0\leq
t\leq 1$, of holomorphic maps, such that (1) $F_0=\id_D$, (2)
$F_1(D)\subset\Omega$, and (3) for each $t$,
$F_t(\Omega)\subset\Omega$. We then have the following result
{\cite[Thm~1]{tr}}: {\em Let $D$ be a domain of holomorphy in a
Stein manifold, and $\Omega\subset D$. If there is a complex
retraction of $D$ into $\Omega$, then, $\Omega$ has a schlicht
envelope of holomorphy.} To apply this, we take
$D=U\setminus\{z_1=0\}$, $\Omega=U^+$, and $F_t(z) =
(z_1,(1-t)z_2,(1-t)\tilde{z})$. We claim that $F_t$ is a complex
retraction of $D$ into $\Omega$. Condition (1) is clear. Since
$F_1(D)=\{z\in \cx^n\colon 0<\abs{z_1}<1,
z_2=0,\tilde{z}=0\}\subset\Omega$, condition (2) follows. For
condition (3) we note that $F_t(D)\subset D$ for each $t$, and  if
$z\in \Omega$, we have $\rho_1(F_t(z))=
(1-t)\Re(z_1z_2)+\abs{z_1}^2= (1-t)\rho_1(z)+ t\abs{z_1}^2>0$, so
that $F_t(\Omega)\subset \Omega$. This shows that the envelope of
$U^+$ is schlicht.

Now we deduce that $\mathcal{E}(\omega)$ is schlicht. By the Lewy extension theorem, there is a one-sided neighbourhood
$\tilde{\omega}$ of the hypersurface $S_1$ (whose Levi form has one
positive eigenvalue) on the $U^-$ side to which every function in
$\mathcal{O}(U^+)$ extends holomorphically. After shrinking
$\tilde{\omega}$ we may assume that $\tilde{\omega}\subset\omega$.
Set $U^\sharp = U^+\cup S_1\cup \tilde{\omega}$. Then the
restriction map induces an isomorphism
$\mathcal{O}(U^\sharp)\cong\mathcal{O}(U^+)$. In particular,
$\mathcal{E}(U^\sharp)=\mathcal{E}(U^+)$.

Seeking a contradiction, assume now that the envelope $\pi:\mathcal{E}(\omega)\rightarrow \cx^n$ is multiple
sheeted. Then there exist a function $f\in \mathcal{O}(\omega)$, a point $p\in \cx^2$ and two paths
$\tau_1$ and $\tau_2$, starting in $\omega$ and ending in $p$, along which $f$ has holomorphic extensions
$f_1$ and $f_2$ such that $f_1(p)\ne f_2(p)$. Note that $U^-$ is a pseudoconvex domain containing $\omega$,
and therefore, $\tau_1, \tau_2 \subset U^-\subset\pi(\mathcal{E}(\omega))$. Further, without loss of
generality, we can assume that the paths $\tau_1$ and $\tau_2$ start in $\tilde{\omega}$.

Now let $U_1=U\setminus\{z_1=0\}$. Then $U_1$ is pseudoconvex. Note
that $U^\sharp\cup U^-= U_1$ and $U^\sharp\cap U^-=\tilde{\omega}$.
It is possible to solve a Cousin Problem in $U_1$ to obtain
functions $F^\sharp\in \mathcal{O}(U^\sharp)$ and
$F^-\in\mathcal{O}(U^-)$ such that $f|_{\tilde{\omega}}=
F^\sharp-F^-$.

For $j=1,2$, set $F_j=F^-+f_j$. Then $F_j$ is a holomorphic function
along the path $\tau_j$ which extends the function
$F^\sharp\in\mathcal{O}(U^\sharp)$. But then we have $F_1(p)\not= F_2(p)$ which contradicts the fact that
$\mathcal{E}(U^\sharp)=\mathcal{E}(U^+)$ is schlicht.
\end{proof}

\subsection{Proof of Theorem~\ref{thm:M}.}\label{s:proof}
First we note that Propositions~\ref{prop-pathext}
and~\ref{prop-schlicht} also hold for $M=\{\rho=0\}$, where the
function $\rho$ is as in Theorem~\ref{thm:M}. The crucial
observation is that for a small enough neighbourhood $U$ of $0$ in
$\cx^n$,   as in the proof of Proposition~\ref{prop-pathext},  we
can obtain discs $\Delta_w$ which pass through any specified point
in $U^-=\{z\in U\colon \rho<0\}$,  remain inside $U$, shrink as one
approaches certain boundary points, and whose boundaries are
contained in the one-sided neighbourhood $\omega$  of
$S=(M\setminus\{z_1=0\})\cap U$ to which every CR function on $S$
admits the Lewy holomorphic extension . To see this, let $c>0$ be
such that $\gamma>c$ on $U$. As before, set
$M_c=\{z\in\cx^n\colon\rho_c(z)= \Re(z_1z_2)+c\abs{z_1}^2=0\}$, and
let $U_c^-=\{z\in U\colon \rho_c(z)<0\}$. Then $U^-\subset U^-_c$,
and $M_c$ is biholomorphic to $M_1$ by a complex linear map. After
applying the linear biholomorphism, and a dilation, we may assume
that $U$ is the unit ball and $c=1$. For $w\in U^-$, we can clearly
choose $\Delta_w$ to be a subset of a connected component of
$\overline{D_w\cap U^-}$ (where $D_w$ is as in
Lemma~\ref{lem:discs}) such that the properties claimed are
verified. This provides the generalization of
Proposition~\ref{prop-pathext} to general $M$. It is easy to verify
that the proof of Prop.~\ref{prop-schlicht} also carries over, {\em
mutatis mutandis}, to this general case.

Suppose now that $f$ is a CR function on $S$. As was observed in the
previous paragraph, by the Lewy extension theorem,  $f$ extends to a
holomorphic function $\tilde{f}$ on a one-sided neighbourhood
$\omega\subset\{\rho<0\}$, and by the previous steps, $\tilde f$
extends further to a holomorphic function $F$ on  $V^-$. The
extension $F$ assumes the boundary values $f$ on $S$ in the same way
in which the Lewy extension assumes the value $f$ on $S$. To
complete the proof, we need to understand the behaviour of $F$ as
one approaches $\{z_1=0\}$.

First assume that $f$ is in $L^\infty$. Then, by the paragraph above,
$F$ has distributional boundary values $f$ on $S\cap V$. We need to
show that $F$ has distributional boundary values equal to $f$ on
$\reg{M}\cap V$.

Let $M^*=M\setminus\{z_1=z_2=0\}$. Then the fact that
\eqref{phi-eq} is a $\mathcal{C}^\ell$-diffeomorphism shows that
$M^*$ is a $\mathcal{C}^\ell$-smooth hypersurface (where $\gamma$ in
\eqref{eq-rhodef} is $\mathcal{C}^\ell$-smooth.) Clearly
$\reg{M}\subset M^*$. We will show that $F$ has distributional
boundary values $f$ on $M^*\cap V$.

 Let $\nu(z)$ denote the unit normal to $M^*$
directed towards $\{\rho<0\}$. For $t>0$, define a function $F_t$ on
$M^*$ by $F_t(z)= F(z+t\nu(z))$. Since $\abs{F}\leq \sup_S\abs{f}$,
we have $\abs{F_t(z)}\leq \sup_{S}\abs{f}$ for each $t>0$ and $z\in
M^*$. Moreover, thanks to \cite[Thm~3.1]{kytbook},
$F_t(z)\rightarrow f(z)$ at each Lebesgue point $z$ of $f|_S$, and
since $\{z_1=0\}$ has measure 0, we have on $M^*\cap V$,
\[ \lim_{t\rightarrow 0+} F_t= f
\quad\quad\quad\text{a.e.}\] If $\chi\in
\mathcal{C}^\infty_c(M^*\cap V)$, then by the dominated convergence
theorem,
\[
\lim_{t\rightarrow 0+}\int_{M^*\cap V} F(z+t\nu(z))\chi(z)
d\mathcal{H}=\int_{M^*\cap V} f(z)\chi(z) d\mathcal{H}.
\]
It follows that $f$ is the boundary  value of $F$ in the sense of distributions.

If $f\in\mathcal{C}^k(M)$, $k\geq 0$, then  the function $F$
obtained above extends as a $\mathcal{C}^k$-smooth function to
$S\cap V=(M\setminus\{z_1=0\})\cap V$, and has boundary values equal
to $f$. Since $f$ is continuous on $M^*\cap V$, and $F$ has
distributional boundary values $f$ on $M^*\cap V$, it follows from
\cite[Thms~7.2.6 and~7.5.1]{ber} that $F$ extends as a
$\mathcal{C}^k$-smooth function to $M^*\cap V$.

It remains to prove that $F$ extends as a $\mathcal{C}^k$ function
also to $\Sigma=\{z_1=z_2=0\}\cap V$. First assume that $k=0$. Let
$w\in \Sigma$. We need to show that as $z\rightarrow w$ through
points in $V^-$, we have that $F(z)\rightarrow f(w)$. Suppose that
for each $z\in V^-$, there exists a disc $\Delta(z)$ in
$\overline{V^-}$ passing through $z$ such that the boundary
$\partial\Delta(z)\subset {M}^*\cap V$, and such that $\Delta(z)$
shrinks to the point $w$ as $V^-\ni z\rightarrow w$. Applying the
maximum principle to the holomorphic function $F(z)-f(w)$ on the
disc $\Delta(z)$, we have
\begin{align*}
\abs{F(z)-f(w)}&\leq \sup_{\zeta\in\partial\Delta(z)}
\abs{F(\zeta)-f(w)}\\
&=\sup_{\zeta\in\partial\Delta(z)}\abs{f(\zeta)-f(w)}.
\end{align*}
Since $\Delta(z)$ shrinks to $w$ as $z\rightarrow w$, it follows
that
\[ \lim_{V^-\backepsilon z\rightarrow w}\sup_{\zeta\in\partial\Delta(z)}\abs{f(\zeta)-f(w)}=0.\]
To complete the proof, we specify the discs $\Delta(z)$. We can
take, for example,
\[ \Delta(z) =\{\zeta\in\overline{V^-}\colon \zeta_2=z_2,\tilde{\zeta}=\tilde{z}\}.\]
For small $z$, both required properties are easily verified.

Now let $k\geq 1$. By the previous paragraph, every partial
derivative of $F$ of order $k$ extends continuously to 0. It follows
that $F$ extends as a $\mathcal{C}^k$-smooth function to the origin.

\section{The hypersurface $N$}\label{sec-n}

\subsection{Definition of $N$ and precise statement}\label{sec-ndef}

We now define $N$ and state the general form of
Theorem~\ref{mainthm}(ii). Let $(z_1,z_2,z_3,\tilde z)\in
\cx^3\times\cx^{n-3}$ be the coordinates in $\cx^n$, $n\ge 3$. Let
$U$ be a neighbourhood of 0 in $\cx^n$, and let $h$ be a
$\mathcal{C}^\ell$-smooth real-valued function on a neighbourhood of
$\overline{U}$ such that $h(0)=0$, where $\ell\geq 2$. We let
\begin{equation}\label{eq-sigmadef}
\sigma(z) = \Re(z_1z_2+z_1\overline{z_3}) +\abs{z_1}^2 h(z),
\end{equation}
and
set
\begin{equation}\label{eq:N}
 N =\{z\in U \colon \sigma(z)=0\}
\end{equation}
As with $M$, we set $U^\pm=\{z\in U\colon \pm\sigma(z)>0\}$.

\begin{theorem}\label{thm:N}
Let $N$ be given as in~\eqref{eq:N}. For any neighbourhood $U$ of the origin, there exists
a neighbourhood $V\subset \cx^n$ of $0$ such that any bounded CR function $f$ on $N\cap U$
extends to a holomorphic function $F$ on $V$ with $F|_{N\cap V}=f$.
\end{theorem}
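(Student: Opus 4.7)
The plan is to mirror the proof of Theorem~\ref{thm:M} in Sections~\ref{s:ext}--\ref{s:proof}, but exploiting the structural feature that distinguishes $N$ from $M$: at each smooth point $p\in N$ with $p_1\neq 0$ the Levi form of $N$ restricted to $T^{(1,0)}_p N$ is \emph{indefinite}, with one positive and one negative eigenvalue, both of magnitude comparable to $|p_1|$. Consequently Lewy's extension theorem applied on both sides simultaneously furnishes a \emph{two-sided} open neighbourhood $\omega$ of $N^\ast:=N\setminus\{z_1=0\}$ in $\cx^n$ to which every bounded CR function $f$ on $N$ extends as a single holomorphic function $\tilde f$. Analytic discs with boundaries in $\omega$ should then sweep out a full ambient neighbourhood of $0$ (not just a one-sided one), yielding extension to a genuine neighbourhood $V$ of the origin.

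First I would verify the Levi-form computation: for the quadratic model ($h\equiv 0$) the complex Hessian $\partial\dbar\sigma_0$ equals $\tfrac{1}{2}(dz_1\wedge d\bar z_3+dz_3\wedge d\bar z_1)$ and so has eigenvalues $\pm\tfrac12,0,\ldots,0$ on all of $\cx^n$; restricting to $T^{(1,0)}_pN$ at a point $p$ with $p_1\neq 0$ gives eigenvalues $\pm c|p_1|$ for a universal $c>0$. The subanalytic perturbation $|z_1|^2 h$ contributes only terms of strictly lower order as $z_1\to 0$, so indefiniteness is preserved throughout a punctured neighbourhood of $0$ in $N$. Lewy's theorem then supplies the two-sided tube $\omega\supset N^\ast$ and the holomorphic extension $\tilde f\in\mathcal{O}(\omega)$.

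Next, paralleling Lemma~\ref{lem:discs}, I would construct a continuous family of analytic discs $\{\Delta_w\}_{w\in V}$ with $w\in\Delta_w$ and $\partial\Delta_w\subset\omega$. The natural ansatz is the family of complex-line discs
\[
\phi_w(\zeta)\assign(w_1\zeta,\,w_2,\,w_3,\,\tilde w),\qquad\zeta\in\overline\Delta,\ w\in V,
\]
so that $\phi_w(1)=w$ and $\phi_w(0)=(0,w_2,w_3,\tilde w)\in\{z_1=0\}\subset N$, while on the boundary
\[
|\sigma(\phi_w(e^{i\theta}))|\leq|w_1|\cdot|w_2+\bar w_3|+C|w_1|^2=O(|w_1|\cdot|w|),
\]
placing $\partial\Delta_w$ well inside $\omega$ once $|w|$ is small. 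As $w$ ranges over a small neighbourhood of $0$, these discs already cover every point of $V$ --- including points on $\{z_1=0\}$, which appear as the disc centre $\phi_w(0)$ --- so a path-wise Kontinuit\"atssatz argument in the spirit of Proposition~\ref{prop-pathext} extends $\tilde f$ to a (possibly multi-valued) holomorphic function on $V$. Single-valuedness is then established by a Trapani-type complex retraction of $U\setminus\{z_1=0\}$ onto (a neighbourhood of) the $z_1$-axis --- in the model case $F_t(z)=(z_1,(1-t)z_2,(1-t)z_3,(1-t)\tilde z)$, with an obvious correction in the general case --- exactly as in Proposition~\ref{prop-schlicht}. The resulting bounded $F\in\mathcal{O}(V\setminus\{z_1=0\})$ then extends across the complex hypersurface $\{z_1=0\}\cap V$ by Riemann's removable singularity theorem applied slice-wise in $z_1$, while its boundary values on $N^\ast\cap V$ are identified with $f$ by the Lewy theory, yielding $F|_{N\cap V}=f$.

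The main obstacle I expect lies in the disc-to-tube fit near $\{z_1=0\}$: the Lewy thickness of $\omega$ degenerates at rate $|z_1|$ (because the Levi eigenvalues $\pm c|z_1|$ do), and the discs $\Delta_w$ shrink at the same rate, so uniform containment $\partial\Delta_w\subset\omega$ as $w\to 0$ requires tracking quantitatively how Lewy's extension radius depends on the position along $N^\ast$. The scaling $w_1\zeta$ in the first coordinate of the ansatz is precisely what balances these two rates, and making that balance rigorous --- together with adapting Trapani's complex retraction to the general (non-model) case where the $z_1$-axis need not lie on $N$ --- is the technical heart of the argument.
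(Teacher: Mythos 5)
Your overall strategy (two-sided Lewy extension to a neighbourhood $\omega$ of $N\setminus\{z_1=0\}$, then discs plus the Kontinuit\"atssatz, then a monodromy argument and removal of $\{z_1=0\}$) has the right skeleton, but the disc construction has a flaw that your closing paragraph identifies only partially and cannot be repaired as you propose. Writing $\sigma_1(z)=\Re\bigl(z_1(z_2+\overline{z_3})\bigr)$, the boundary of your disc $\phi_w(\zeta)=(w_1\zeta,w_2,w_3,\tilde w)$ satisfies $\sigma_1(\phi_w(e^{i\theta}))=\Re\bigl(e^{i\theta}w_1(w_2+\overline{w_3})\bigr)$, which sweeps the full interval $\bigl[-|w_1||w_2+\overline{w_3}|,\,|w_1||w_2+\overline{w_3}|\bigr]$ as $\theta$ varies; since $|\nabla\sigma_1|\approx |z_1|+|z_2+\overline{z_3}|$ there, the boundary circle reaches distance comparable to $\min(|w_1|,|w_2+\overline{w_3}|)$ from $N$, which for generic $w$ is comparable to the diameter of the disc itself. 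No neighbourhood produced by the Lewy theorem is guaranteed to be that thick (its thickness admits no lower bound in general, and any quantitative version degenerates to higher order), and, more fundamentally, the argument must work for an \emph{arbitrarily thin} $\omega$, as the paper stresses before Proposition~\ref{prop-envelope2}. So the estimate ``$|\sigma|=O(|w_1||w|)$'' does not place $\partial\Delta_w$ inside $\omega$: closeness to $N$ in absolute terms is not membership in a neighbourhood of unknown, possibly rapidly vanishing, thickness. Tracking the Lewy radius quantitatively, as you suggest, cannot close this gap.

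The paper circumvents this in two ways that are missing from your proposal. First, it uses discs whose boundaries lie \emph{exactly on} $N_1\setminus\{z_1=0\}$ --- the $M_1$-discs of Lemma~\ref{lem:discs} placed in the slice $\{z_1=z_3\}$ and moved around by the translations $T_\tau(z)=(z_1,z_2+\tau,z_3-\overline\tau,\tilde z)$ preserving $N_1$ --- and such discs can be shrunk slightly so that their boundaries lie in any neighbourhood of $N\setminus\{z_1=0\}$, however thin. Second, it never attaches discs to $\omega$ directly: Proposition~\ref{hullprop2} extends functions from a one-sided neighbourhood $N^+(\epsilon)$ of \emph{definite} size, and the reduction from thin $\omega$ to this situation is a Cousin-problem decomposition $f=f^+-f^-$ with $f^\pm$ holomorphic on $N^\pm(\epsilon)\cup\omega$ (Proposition~\ref{prop-envelope2}), applying Proposition~\ref{hullprop2} to $f^+$ and, after the reflection $z_1\mapsto -z_1$ which swaps the two sides, to $f^-$. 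A smaller point: your Trapani retraction $F_t(z)=(z_1,(1-t)z_2,(1-t)z_3,(1-t)\tilde z)$ gives $\sigma_1(F_t(z))=(1-t)\sigma_1(z)$, so $F_1$ maps everything onto $\{\sigma_1=0\}$ rather than into the open set $\{\sigma_1>0\}$, and Trapani's hypotheses fail; the paper instead proves single-valuedness by deforming loops in $B(\delta)\setminus\{z_1=0\}$ into $N_1^+$ using the exhaustion by the sets $N_\lambda^+$ of~\eqref{nlambda} and the anisotropic dilations modeled on~\eqref{psi}.
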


We note that the singularity of $N$ is degenerate, i.e., the real Hessian of $\sigma$ at 0 is not invertible.
In fact, the Hessian has two positive and two negative eigenvalues, and the remaining eigenvalues vanish.
Therefore, the Morse Lemma does not apply. However, there is a diffeomorphism $\Psi$ from a neighbourhood of the
origin in $\cx^n$ into $\cx^n$, which maps $N$ onto a neighbourhood $0$ in the cone
\begin{equation}\label{eq-n1}
N_1=\left\{z\in \cx^n\colon \sigma_1(z)=\Re(z_1z_2+z_1\overline{z}_3)=0 \right\}.
\end{equation}
If $\gamma(z)= 1+h(z)$, this map is given explicitly by
\begin{equation}\label{psi}
 \Psi(z)=\left( \sqrt{\gamma(z)}z_1,
\frac{z_2}{\sqrt{\gamma(z)}},\frac{\gamma(z)-1}{\sqrt{\gamma(z)}}z_1+
\frac{z_3}{\sqrt{\gamma(z)}}, \tilde z\right) .
\end{equation}

Note that $\Psi$ maps the hypersurface $\{z_1=0\}\subset N$ onto
itself. A computation shows that $N_1^{\rm{sing}} =\{z\in\cx^n\colon
z_1=0, z_2+\overline{z_3}=0\}$. It follows that
$N\setminus\{z_1=0\}$ is smooth.

The following notation will be used in the proof.
For $\delta>0$, denote by $B(\delta)=\{z\in\cx^n\colon\abs{z}<\delta\}$ the ball radius $\delta$ in $\cx^n$
centred at 0, and set $N^\pm(\delta)=B(\delta)\cap U^\pm$, where $U^\pm =\{z\in U\colon \pm\sigma(z)>0\}$
and $\sigma$ is the defining function of $N$ as in \eqref{eq-sigmadef}.  Note that $N^\pm(\delta)$
are one-sided neighbourhoods of 0 with respect to $N$. We use the notation $N_1^\pm(\delta)$ for
similarly defined one-sided neighbourhoods of $N_1$. The proof of Theorem~\ref{thm:N} follows a
similar pattern. After proving some results concerning envelopes of one-sided neighbourhoods of $0$
with respect to $N$, in Proposition~\ref{prop-envelope2} we characterize local envelope of holomorphy
of arbitrarily thin neighbourhoods of $N\setminus\{z_1=0\}$ at 0.

\subsection{Envelope of one-sided neighbourhoods of $N$}

\begin{prop}\label{hullprop2}
Given any $\epsilon>0$, there is a $\delta>0$ such that every
$g\in\mathcal{O}(N^+(\epsilon))$ extends to a single-valued holomorphic function in $B(\delta)\setminus\{z_1=0\}$.
\end{prop}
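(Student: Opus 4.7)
The plan is to adapt the two-step strategy used for the hypersurface $M$ in Propositions~\ref{prop-pathext} and~\ref{prop-schlicht}: an analytic-disc plus Kontinuit\"{a}tssatz construction of the extension, followed by a schlichtness argument. As the first step, reduce via the diffeomorphism $\Psi$ of \eqref{psi} (which preserves $\{z_1=0\}$) to the case $h \equiv 0$ of the model cone $N_1 = \{\sigma_1 = 0\}$, paralleling the reduction from $M$ to $M_1$ in Section~\ref{s:proof}. Throughout the rest of the argument, work with $N_1$ and translate back to $N$ at the end by perturbation.

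Two observations drive the construction. First, at every smooth point $p \in N_1$ with $z_1(p) \neq 0$, the Levi form of $\sigma_1$ on the complex tangent space is indefinite: the mixed Hessian $\partial\dbar\sigma_1 = \tfrac{1}{2}(dz_1 \wedge d\bar z_3 + dz_3 \wedge d\bar z_1)$ has eigenvalues $\pm\tfrac{1}{2}$ together with zeros, and a short calculation shows that this mixed signature survives restriction to $T^{1,0}_p N_1$. By the Lewy extension theorem, every $g \in \mathcal{O}(N_1^+(\epsilon))$ extends to a thin two-sided neighborhood $\tilde\omega$ of $(N_1 \setminus \{z_1=0\}) \cap B(\epsilon/2)$. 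Second, for each $w \in B(\delta) \setminus \{z_1=0\}$ with $\delta$ small enough, I would construct an analytic disc $D_w$ passing through $w$, staying in $B(\epsilon/2)$, and with boundary attached to $N_1$, via the ansatz $\phi_w(\zeta) = (\zeta,\, A/\zeta + B,\, C\zeta + D,\, \tilde w)$. Substituting into $\sigma_1 \circ \phi_w$ and imposing vanishing on a circle $|\zeta - \alpha| = r$ gives a small system of linear equations in the parameters $A, B, C, D, \alpha, r$; a direct calculation yields $\sigma_1(\phi_w(\zeta)) = \Re(C)(|\zeta-\alpha|^2 - r^2)$, so the passage-through condition $\phi_w(w_1) = w$ combined with the attachment condition can be solved for every $w$ with $w_1 \neq 0$, with $\Re(C)$ chosen of the appropriate sign depending on which side of $N_1$ the point $w$ lies. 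After a slight shrinking, the boundaries of the $D_w$ lie inside $\tilde\omega$, and the Kontinuit\"{a}tssatz propagates $g$ to an \emph{a priori} multi-valued holomorphic extension on $B(\delta) \setminus \{z_1=0\}$.

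Single-valuedness is then obtained by imitating the Cousin/retraction argument of Proposition~\ref{prop-schlicht}: since $B(\delta) \setminus \{z_1 = 0\}$ is pseudoconvex, any pair of putative holomorphic continuations along different paths may be compared via a Cousin splitting $g|_{\tilde\omega} = F^\sharp - F^-$ over a decomposition of $B(\delta) \setminus \{z_1=0\}$, where $F^\sharp$ is defined on a Lewy thickening of $N_1^+(\epsilon)$ whose envelope is schlicht by a Trapani-type complex retraction (the natural candidate being $F_t(z) = (z_1,(1-t)z_2,(1-t)z_3,(1-t)\tilde z)$, possibly corrected by a small holomorphic push into $N_1^+$ so as to send $B(\delta)\setminus\{z_1=0\}$ properly into the target at $t=1$). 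The main obstacle is the disc construction itself: unlike the $M$ case, where the discs needed to fill only a one-sided neighborhood $V^-$, here they must reach every point of a punctured full ball, including points on both sides of $N_1$, while keeping boundaries inside the prescribed thin set $\tilde\omega$. Verifying the solvability of the attachment/passage system uniformly in $w$, and controlling the discs as $w$ approaches $\{z_1 = 0\}$ so they continue to shrink to smooth points of $N_1$ rather than escape $B(\epsilon/2)$, will be the most delicate part of the argument.
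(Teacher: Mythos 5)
Your overall architecture (Lewy extension across $N_1\setminus\{z_1=0\}$, explicit attached discs plus the Kontinuit\"atssatz, then a single-valuedness argument) matches the paper's, and your disc ansatz $\phi_w(\zeta)=(\zeta,\,A/\zeta+B,\,C\zeta+D,\,\tilde w)$ does contain the paper's discs, which are the $M_1$-discs of Lemma~\ref{lem:discs} placed on the slice $L=\{z_1=z_3\}$ and moved around by the translations $T_\tau(z)=(z_1,z_2+\tau,z_3-\overline{\tau},\tilde z)$ that preserve $N_1$ and $\{z_1=0\}$. But there are two genuine gaps. First, the reduction to the model cone via $\Psi$ is not legitimate: $\Psi$ in \eqref{psi} is only a $\mathcal{C}^\ell$-diffeomorphism (it involves $\sqrt{\gamma(z)}$ with $\gamma=1+h$ real-valued and non-constant), so it transports neither holomorphic functions nor envelopes of holomorphy. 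The paper instead shrinks $U$ so that $h\geq\lambda-1$ for some $0<\lambda<1$, uses the inclusion $N_\lambda^+(\epsilon)\subset N^+(\epsilon)$ of the one-sided neighbourhood of the quadratic cone $N_\lambda$, which \emph{is} linearly biholomorphic to $N_1$, and simply restricts $g$ to $N_\lambda^+(\epsilon)$; this is also how the $M$ case is reduced to $M_1$ in Section~\ref{s:proof} --- the diffeomorphisms $\Phi$ and $\Psi$ are used only to describe the smooth structure of the hypersurfaces, never to move holomorphic data.

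Second, your single-valuedness argument does not go through as stated. The candidate retraction $F_t(z)=(z_1,(1-t)z_2,(1-t)z_3,(1-t)\tilde z)$ satisfies $\sigma_1(F_t(z))=(1-t)\sigma_1(z)$, so $F_1$ maps everything onto $\{z_2=z_3=\tilde z=0\}\subset N_1$ rather than into $N_1^+$: the $\abs{z_1}^2$ term in $\rho_1$ that rescued condition (2) of Trapani's theorem for $M_1$ is absent from $\sigma_1$. A holomorphic correction (say $(1-t)z_3+ctz_1$ in the third slot, giving $\sigma_1(F_t(z))=(1-t)\sigma_1(z)+ct\abs{z_1}^2$) can be arranged, but then $F_t$ no longer maps the ball into itself and the domains must be readjusted; you have not verified any of this. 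The paper avoids Trapani and Cousin entirely at this stage and uses a purely topological monodromy argument: since $N_1$ is a cone, a radial diffeomorphism identifies the pair $(B(\delta)\setminus\{z_1=0\},\,N_1^+(\delta))$ with $(\cx^n\setminus\{z_1=0\},\,N_1^+)$, every bounded loop in $\cx^n\setminus\{z_1=0\}$ lies in some $N_\mu^+=\{\sigma_1+(\mu-1)\abs{z_1}^2>0\}$, and $N_\mu^+$ deforms into $N_1^+$ through the maps $z\mapsto(sz_1,z_2/s,(s-1/s)z_1+z_3/s,\tilde z)$. A smaller point: you do not need discs through points on both sides of $N_1$ --- the function is already defined on $N^+(\epsilon)$ and reaches $N_1\setminus\{z_1=0\}$ by the Lewy theorem, so only $N_1^-(\delta)$ must be filled by discs, exactly as in the $M$ case.
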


\begin{proof} We first prove the proposition for $N_1$. Let $L=\{z\in \cx^n: z_1=z_3\}$. Then
$$N_1\cap L=\{\Re(z_1z_2)+|z_1|^2=0,\ z_1=z_3\}.$$
Thus, $N_1\cap L$ (considered as a singular hypersurface in $L\cong\cx^{n-1}$) is equivalent to the
hypersurface $M_1$ of the previous section. By Lemma~\ref{lem:discs}, there exists a continuous
family of discs $\{D_w\}$ attached to $(N_1\cap L)\setminus\{z_1=0\}$ and passing through $w$,
where $w=(w_1,w_2,w_1,\tilde w)\in L$, and $\Re(w_1w_2)+|w_1|^2<0$. Further, the discs $D_w$ shrink
to a point $p_1=\left(a,-a,a,\tilde 0\right)$, as $w\to p_1$, where $a>0$ is sufficiently small.

Consider the translations $$T_\tau(z_1,z_2,z_3,\tilde z)=(z_1,z_2+\tau,z_3-\overline\tau,\tilde z).$$ Note that
$N_1$ is invariant under $T_\tau$, and $T_\tau(\{z_1=0\})=\{z_1=0\}$ for any $\tau\in \cx$.
It follows that for any $\tau\in \mathbb C$, the discs $T_\tau(D_w)$ are also attached to
$(N_1\cap L)\setminus\{z_1=0\}$.

Let $\delta=1/10$, and let $w=(w_1,w_2,w_3,\tilde w)$ be an arbitrary point in $N^-_1(\delta)$. We set
$\tau=\overline w_1 - \overline w_3$, and $w'=(w_1,w_2-\tau,w_1,\tilde w)$. Note that $w' \in L$, and
$$
\Re(w_1(w_2-\tau))+|w_1|^2=\Re(w_1w_2+w_1\overline w_3)<0.
$$
Therefore, $T_\tau(D_{w'})$ passes through $w$ and is attached to $(N_1\cap L)\setminus\{z_1=0\}$.
The disc $T_\tau(D_{w'})$ can be given explicitly by
$$
\phi_w(\zeta)=\left(\zeta,\frac{|w_1|^2}{\zeta}+w_2-\overline w_1,\zeta-\overline \tau, \tilde w \right),
$$
where $\zeta$ belongs to the set $\Sigma_{(w_1,w_2-\tau)}$ defined in \eqref{eq-sw}. Further, repeating the
calculations of Lemma~\ref{lem:discs}, for $w\in N^-(\delta)$ we have
$$
|\phi(\zeta)|<\sqrt{|\zeta|^2+\left|\frac{|w_1|^2}{\zeta}-(\overline w_1-w_2)\right|+
|\zeta|^2+|\tau|^2}<2\sqrt 2 |w|+3|w|<1.
$$
This shows that the discs are contained in the unit ball.

A computation shows that the Levi form of $N_1$  has one positive eigenvalue at each point of $N_1\setminus \{z_1=0\}$.
Therefore, by the Lewy extension theorem, every holomorphic function on $N_1^+(1)$ extends to each
point of $(N_1\cap B(1))\setminus\{z_1=0\}$, in particular, to the boundaries of the discs
constructed above. As before, by the Kontinuit\"atssatz, any function holomorphic in $N_1^+(1)$
admits analytic continuation along any path starting at $p_1$ and ending at any point $p$
in $N_1^-(\delta)$.

Now we show that the extension so obtained is single-valued. For this it is sufficient to show that
any loop in $B\left(\delta\right)\setminus\{z_1=0\}$ can be deformed into a path in
$N_1^+\left(\delta\right)$. Let $\phi:\left[0,\delta\right)\rightarrow [0,\infty)$ be a
diffeomorphism. Then the diffeomorphism $z\mapsto \phi\left(\abs{z}\right)z$ maps
$B\left(\delta\right)\setminus\{z_1=0\}$ to $\cx^n\setminus\{z_1=0\}$ and
$N_1^+\left(\delta\right)$ to $N_1^+=\{ z\in\cx^n \colon \sigma_1(z)>0\}$. It is therefore
sufficient to show that every path in $\cx^n\setminus\{z_1=0\}$ can be deformed to a path in
$N_1^+$.

For $\lambda$ real, let
\begin{equation}\label{nlambda}
N_\lambda^+ =\{ z\in\cx^n\colon \sigma_1(z)+(\lambda-1)\abs{z_1}^2>0\}.
\end{equation}
Then
\[ \bigcup_{\lambda\geq 1} N_\lambda^+ = \cx^n\setminus\{z_1=0\}.\]
Hence, if $\alpha$ is any bounded path in $\cx^n\setminus\{z_1=0\}$, then there exists
a $\mu\geq 1$ such that $\alpha\subset N_{\mu}^+$. Now as $s$ increases from $1$ to
$\sqrt{\mu}$, a map given by
\[ z\mapsto \left( {s}z_1,
\frac{1}{{s}}z_2,\left({s}-\frac{1}{{s}}\right)z_1+ \frac{1}{{s}}z_3,\tilde z\right)
\]
continuously deforms $N_\mu^+$ into $N_1^+$. (cf. equation~\eqref{psi}.) This proves the proposition
for $N_1$.

For the general case of hypersurface $N$, we shrink $U$ such that there exists $\lambda$, $0<\lambda<1$, with
the property that $h(z)\geq (\lambda-1)$ for $z\in U$. Then, $N_\lambda^+(\epsilon)\subset N^+(\epsilon)$ for every
$\epsilon>0$. The linear biholomorphism of~\eqref{psi} with $\gamma\equiv \lambda$ maps $N_\lambda^+$
onto $N_1^+$ while fixing $\{z_1=0\}$. It follows that there is $\delta>0$ such that every
holomorphic function on $N_\lambda^+(\epsilon)$ extends to $B(\delta)\setminus\{z_1=0\}$. Thus,
Prop.~\ref{hullprop2} follows by restricting $g$ to $N_\lambda^+(\epsilon)$.
\end{proof}

\subsection{Envelope of holomorphy of a neighbourhood of $N\setminus\{z_1=0\}$}

We now deduce the following consequence of Proposition~\ref{hullprop2}.

\begin{prop}\label{prop-envelope2}
Given a neighbourhood  $U$ of 0 in $\cx^n$ there is a neighbourhood $V$ of 0 in $\cx^n$ with the following
property. If $\omega\subset U$ is a neighbourhood of $(N\setminus\{z_1=0\})\cap U$, the envelope $\mathcal{E}(\omega)$
contains the set $V\setminus\{z_1=0\}$.
\end{prop}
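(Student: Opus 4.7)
The plan is to deduce the statement from Proposition \ref{hullprop2} by first showing that any $g\in\mathcal{O}(\omega)$ admits a single-valued holomorphic extension to a fixed one-sided neighbourhood $N_1^+(\epsilon)$ of the origin, with $\epsilon>0$ depending only on $U$ and not on $\omega$. The key tool is an analog of the disc family from Proposition \ref{hullprop2}, but with interiors on the $U^+$ side of $N_1$ rather than the $U^-$ side.

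First reduce to the case $N=N_1$ via the linear biholomorphism fixing $\{z_1=0\}$ used in the proof of Proposition \ref{hullprop2}. Then consider the involution $\Theta:(z_1,z_2,z_3,\tilde z)\mapsto(z_1,-z_2,-z_3,\tilde z)$, which preserves $N_1$, fixes $\{z_1=0\}$, and satisfies $\sigma_1\circ\Theta=-\sigma_1$, hence interchanges $U_1^+$ and $U_1^-$. Composing the disc family $T_\tau(D_{w'})$ from the proof of Proposition \ref{hullprop2} with $\Theta$ yields a continuous family of analytic discs, parametrised by $w\in N_1^+(\delta)$, with interiors in $U_1^+$, boundaries in $N_1\setminus\{z_1=0\}$, and whose interiors cover $N_1^+(\delta)$.

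Given any neighbourhood $\omega$ of $(N_1\setminus\{z_1=0\})\cap U$ and $g\in\mathcal{O}(\omega)$, the boundaries of all these discs lie in $N_1\setminus\{z_1=0\}\subset\omega$, while the discs near the base point $\Theta(p_1)\in N_1\setminus\{z_1=0\}$ lie entirely in $\omega$. The Kontinuit\"atssatz therefore extends $g$ along the family to an extension $\tilde g$ defined in a neighbourhood of $\omega\cup N_1^+(\delta)$. Applying Proposition \ref{hullprop2} to $\tilde g|_{N_1^+(\delta)}$ produces a single-valued function $G\in\mathcal{O}(B(\delta')\setminus\{z_1=0\})$ for some $\delta'>0$ depending only on $U$; the identity principle, applied on the connected overlap between the domain of $G$ and $\omega$, ensures $G=g$ on $\omega\cap B(\delta')$, and we set $V=B(\delta')$.

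The main obstacle is verifying single-valuedness of the intermediate extension $\tilde g$. Since the parameter space $N_1^+(\delta)$ has nontrivial fundamental group (generated by a small loop around $\{z_1=0\}$), one must rule out monodromy of the Kontinuit\"atssatz extension. This follows from the deformation argument used at the end of the proof of Proposition \ref{hullprop2}: the family of biholomorphisms $z\mapsto(sz_1,z_2/s,(s-1/s)z_1+z_3/s,\tilde z)$ can be used to deform any loop in the parameter space through the domain of analytic continuation into $\omega$, where the extension coincides with $g$ and is therefore single-valued.
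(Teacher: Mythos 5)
Your overall strategy---first push $g$ from the thin two-sided neighbourhood $\omega$ into a full one-sided neighbourhood $N^+(\delta)$ by reflecting the disc family, then invoke Proposition~\ref{hullprop2}---is genuinely different from the paper's, and it has two real gaps. First, the reduction ``to the case $N=N_1$'' is not available: the map \eqref{psi} conjugating $N$ to $N_1$ is only a diffeomorphism, and the \emph{linear} biholomorphism used in the proof of Proposition~\ref{hullprop2} identifies $N_\lambda$ with $N_1$, not $N$ with $N_1$. Consequently your reflected discs are attached to $N_1\setminus\{z_1=0\}$ (equivalently $N_\lambda\setminus\{z_1=0\}$), which for a general $h$ lies inside $U^+$ at positive distance from $N$ away from $\{z_1=0\}$, hence is \emph{not} contained in $\omega$. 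In Proposition~\ref{hullprop2} this is harmless because the data there is a function on the full one-sided neighbourhood $N^+(\epsilon)\supset N_\lambda^+(\epsilon)$; in your setting the data lives only on $\omega$, so the Kontinuit\"atssatz hypothesis $\partial\Delta_t\subset\omega$ fails unless you rebuild the discs attached to $N$ itself (as is done for $M$ in \S\ref{s:proof}).

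Second, the single-valuedness of the intermediate extension $\tilde g$ on $\omega\cup N_1^+(\delta)$ is not established. Since $\pi_1(N_1^+(\delta))$ is nontrivial, you must show that every loop there is homotopic, \emph{within the region where continuation along all paths is available} (i.e.\ within $\omega\cup N_1^+(\delta)$), to a loop in $\omega$. The deformation $z\mapsto(sz_1,z_2/s,(s-1/s)z_1+z_3/s,\tilde z)$ that you cite does the opposite job: it pushes loops from $\cx^n\setminus\{z_1=0\}$ \emph{into} $N_1^+$, passing through all of $\cx^n\setminus\{z_1=0\}$ on the way, so the homotopy leaves $\omega\cup N_1^+(\delta)$ and the monodromy theorem does not apply. (Compare Proposition~\ref{prop-schlicht}, where precisely this kind of direct monodromy argument is avoided in favour of Trapani's theorem.) A correct version would need, e.g., an explicit loop on $N_1\setminus\{z_1=0\}$ generating $\pi_1$ of the one-sided neighbourhood together with a homotopy staying on the $+$ side---none of which is in your write-up. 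The paper sidesteps both difficulties at once: since $B(\epsilon)\setminus\{z_1=0\}$ is pseudoconvex, it solves a Cousin problem for the cover $\{N^+(\epsilon)\cup\omega,\,N^-(\epsilon)\cup\omega\}$ to write $f=f^+-f^-$ with each summand single-valued on a genuine one-sided neighbourhood, applies Proposition~\ref{hullprop2} to $f^+$ directly and to $f^-$ after the reflection $z_1\mapsto-z_1$, and sets $F=\tilde f^+-\tilde f^-$. I would recommend adopting that decomposition rather than trying to repair the monodromy argument.
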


Note that  the neighbourhood $\omega$ can be arbitrarily thin.

\begin{proof} Let $\epsilon>0$ be such that $B(\epsilon)\subset U$. Note that the open set $B(\epsilon)\setminus\{z_1=0\}$
is connected and pseudoconvex, and is divided into two connected components $N^\pm(\epsilon)$ by the smooth hypersurface
$(N\cap B(\epsilon))\setminus\{z_1=0\}$. Moreover, shrinking $\epsilon$ if required, we can assume that the Levi-form of
$(N\cap B(\epsilon))\setminus\{z_1=0\}$ has one positive and one negative eigenvalue at each point.

We set $N^\sharp=N^+(\epsilon)\cup\omega$ and $N^\flat = N^-(\epsilon)\cup\omega$. Then
$N^\sharp\cup N^\flat =B(\epsilon)\setminus\{z_1=0\}$, and $N^\sharp\cap N^\flat =\omega$. Now let
$f\in\mathcal{O}(\omega)$. Since $B(\epsilon)\setminus\{z_1=0\}$ is pseudoconvex, after solving the Cousin problem,
we can write, $f= f^+-f^-$, where $f^+\in\mathcal{O}(N^\sharp)$ and $f^-\in\mathcal{O}(N^\flat)$. By Prop.~\ref{hullprop2}
above, there is $\delta^+>0$ such that $f^+|_{N^+(\epsilon)}$  (and therefore, $f^+$) extends to a holomorphic function
$\tilde{f}^+$ on $B(\delta^+)\setminus\{z_1=0\}$.

The linear map $(z_1,z_2,z_3,\tilde z)\mapsto (-z_1,z_2,z_3,\tilde z)$ maps $N$ onto the singular hypersurface
$\tilde{N}=\{z\in U\colon \tilde{\sigma}(z)=0\}$, where
\[ \tilde{\sigma}(z) = \Re(z_1z_2+z_1\overline{z_3}) + \abs{z_1}^2 \tilde{h}(z),\]
and
$\tilde{h}(z_1,z_2,z_3,\tilde z)=-h(-z_1,z_2,z_3,\tilde z)$. Since this map sends $N^\pm(\epsilon)$ to $\tilde{N}^\mp(\epsilon)$,
by Prop.~\ref{hullprop2} again, there is $\delta^->0$ such that $f^-|_{N^-(\epsilon)}$ (and therefore, $f^-$) extends to a
holomorphic function $\tilde{f}^-$ on $B(\delta^-)\setminus \{z_1=0\}$.

We set $\delta= \min (\delta^+,\delta^-)$, and
\[ F = \tilde{f}^+- \tilde{f}^-.\]
Then $F$ is holomorphic on $B(\delta)\setminus \{z_1=0\}$, and $F|_\omega=f$. This completes the proof with $V= B(\delta)$.
\end{proof}

We now complete the proof of Theorem~\ref{thm:N}. Let $U$ be a neighbourhood of 0 in $\cx^n$ and let $f\in \CR(N\cap U)$
be a bounded CR function. By the Lewy extension theorem, $f$ extends to a neighbourhood $\omega$ of
$(N\setminus\{z_1=0\})\cap U$. Then, by Proposition~\ref{prop-envelope2}, $f$ extends to the set of the form
$V\setminus\{z_1=0\}$, where $V$ is a neighbourhood of $0$. The constructed extension remains a bounded function on the
complement of $\{z_1=0\}$, and therefore, by the removable singularity theorem, it admits holomorphic extension to a
neighbourhood of the origin. This completes the proof.



\begin{thebibliography}{12}

\bibitem{ah} A. Andreotti and C. Denson Hill.
\newblock E. E. Levi convexity and the Hans Lewy problem. I. Reduction
to vanishing theorems.
\newblock {\em Ann. Scuola Norm. Sup. Pisa} (3) {\bf 26} (1972),
325--363.

\bibitem{b} S. Bochner.
\newblock  Weak solutions of linear partial differential equations.
\newblock{\em J. Math. Pures Appl. }(9) {\bf  35} (1956), 193--202.

\bibitem{ber} M. S. Baouendi, P. Ebenfelt, and L. Rothschild.
\newblock Real submanifolds in complex space and their mappings.
\newblock Princeton Mathematical Series, {\bf 47}. Princeton University Press, 1999.

\bibitem{bm} E. Bierstone and P. Milman.
\newblock Semianalytic and subanalytic sets.
\newblock {\em Inst. Hautes \'{E}tudes Sci. Publ. Math.}  No. {\bf 67}  (1988), 5--42.

\bibitem{chirka:cr} E.~M~Cirka.
\newblock Analytic representation of CR-functions.
\newblock {\em Mat. Sb. (N.S.)}  {\bf98} (140)  (1975), no. 4(12),
591--623, 640.
\newblock English translation in {\em Math. USSR Sbornik} {\bf 27}(1975),
No.4, 526-553.

\bibitem{chirka:cas}E.~M.~Chirka.
\newblock Complex analytic sets. 
\newblock volume~46 of {Mathematics and its Applications (Soviet Series)}.
\newblock Kluwer Academic Publishers Group, Dordrecht, 1989.

\bibitem{chirka:rado} E.~M. Chirka.
\newblock Rado's theorem for {CR}-mappings of hypersurfaces.
\newblock {\em Mat. Sb.}, {\bf 185}, (1994), no. 6, 125--144.
\newblock English translation in {\em Russian Acad. Sci. Sb. Math.} {\bf 82}
  (1995), no. 1, 243--259.

\bibitem{chirka:graphs}
E.~M.~Chirka.
\newblock Levi and Tr\'epreau theorems for continuous graphs.
\newblock {\em Tr. Mat. Inst. Steklova}  {\bf 235}  (2001),
{\em Anal. i Geom. Vopr. Kompleks. Analiza}, 272--287;
translation in {\em Proc. Steklov Inst. Math.} {\bf 235}
(2001),  no. 4, 261--276.

\bibitem{cs} D.~Chakrabarti and R.~ Shafikov.
\newblock Holomorphic extension of CR functions from quadratic cones.
\newblock {\em Math. Ann.} {\bf 341} (2008), no. 3, 543--573.

\bibitem{ds}Z. Denkowska and J. Stasica.
\newblock Sur la stratification sous-analytique. {\em Bull. Acad. Polon.
Sci. S\'{e}r. Sci. Math. } {\bf 30} (1982), no. 7-8, 337--340.

\bibitem{dipi} K. Diederich and S. Pinchuk.
\newblock The inverse of a CR-homeomorphism is CR.
\newblock {\em Internat. J. Math.} {\bf 4} (1993), no. 3, 379--394.

\bibitem{hi} G. M. Henkin and A. Iordan,
\newblock Regularity of $\overline\partial$ on pseudoconcave compacts and
applications. 
\newblock {\em Asian J. Math.} 4 (2000), no. 4, 855--883.

\bibitem{kr} K. Kurdyka and G. Raby. \newblock Densit\'{e} des ensembles
sous-analytiques. \newblock {\em Ann. Inst. Fourier}, Grenoble {\bf 39} (1989), 753-771.

\bibitem{kytbook}A.M.  Kytmanov.
\newblock {The Bochner-Martinelli integral and its applications.}
\newblock Translated from the Russian by Harold P. Boas and revised by the author. 
\newblock Birkh\"auser Verlag, Basel, 1995.

\bibitem{kyt-rea} A. M. Kytmanov and C. Rea.
\newblock Elimination of $L^1$ singularities on H\"{o}lder peak sets for CR functions.
\newblock {\em Ann. Scuola Norm. Sup. Pisa Cl. Sci.} (4) {\bf 22} (1995), no. 2, 211--226.

\bibitem{mp} Jo\"el Merker and Edmont Porten.
\newblock Holomorphic extension of CR functions, envelopes of holomorphy, and removable singularities.
\newblock Int. Math. Res. Surv.  {\bf 2006}, 1--287.


\bibitem{stasica} J.~ Stasica.
\newblock The Whitney condition for subanalytic sets.
\newblock {\em  Zeszyty Nauk. Uniw. Jagiellon. Prace Mat.} No.{\bf 23} (1982), 211--221.

\bibitem{stein} E.M.~Stein.
\newblock Singular integrals and differentiability properties of functions.
\newblock Princeton Mathematical Series, No. 30.
\newblock Princeton University Press, Princeton, N.J. 1970.

\bibitem{tr} S.~Trapani.
\newblock Complex retractions and envelopes of holomorphy.
\newblock {\em Proc. Amer. Math. Soc.} {\bf  104} (1988), no. 1, 145--148.

\bibitem{tu} J.-M. Tr{\'e}preau.
\newblock Sur le prolongement holomorphe des fonctions {C}-{R} d\'efines sur
  une hypersurface r\'eelle de classe {$C\sp 2$} dans {${\bf C}\sp n$}.
\newblock {\em Invent. Math.}, {\bf 83} (1986), no. 3, 583--592.


\end{thebibliography}
\end{document}